\documentclass[12pt]{amsart}

\usepackage{amsgen, amsmath, amsfonts, amsthm, amssymb, enumerate,amscd, soul}
\usepackage[all]{xy}
\usepackage{cancel}
\usepackage{float}
\usepackage{xcolor}
\usepackage{changes}
\usepackage{mathtools}
\usepackage{booktabs}

\definecolor{blue}{rgb}{0,0,1}
\definecolor{red}{rgb}{1,0,0}
\definecolor{green}{rgb}{0,.6,.2}
\definecolor{purple}{rgb}{1,0,1}

\long\def\red#1\endred{\textcolor{red}{#1}}
\long\def\blue#1\endblue{\textcolor{blue}{#1}}
\long\def\purple#1\endpurple{\textcolor{purple}{ #1}}
\long\def\green#1\endgreen{\textcolor{green}{#1}}

\newtheorem{defn}{Definition}[section]
\newtheorem{prop}[defn]{Proposition}
\newtheorem{lem}[defn]{Lemma}
\newtheorem{thm}[defn]{Theorem}
\newtheorem{mainthm}{Theorem}

\newtheorem{conj}[defn]{Conjecture}

\newcommand\be{\begin{equation}}
\newcommand\ee{\end{equation}}

\newcommand {\sgn }{{\text{sgn}}}

\newcommand {\G}{{\Gamma}}

\newcommand {\R}{{\mathbb R}}

\newcommand {\g}{{\gamma}}

\newcommand {\ba}{{\mathfrak a}}
\newcommand {\bb}{{\mathfrak b}}

\newcommand {\ca}{{\mathbf a}}

\def\mod{\operatorname{mod}}

\def\Im{\operatorname{Im}}
\def\Re{\operatorname{Re}}

\newcommand{\supp}{\operatorname{supp}}

\def\ca{{\mathfrak a}}
\def\cb{{\mathfrak b}}

\def\sa{{\sigma_\mathfrak a}}
\def\sb{{\sigma_\mathfrak b}}

\begin{document}

\title{Sums of Kloosterman sums formed with modular symbols}

\author{Nikolaos Diamantis}
\address{School of Mathematical Sciences, The University of Nottingham, Nottingham, NG7 2RD, UK}
\email{nikolaos.diamantis@nottingham.ac.uk}

\author{Solomon Friedberg}
\address{Department of Mathematics, Boston College, Chestnut Hill, MA 02467, USA}
\email{solomon.friedberg@bc.edu}

\author{Fredrik Str\"omberg}
\address{School of Mathematical Sciences, The University of Nottingham, Nottingham, NG7 2RD, UK}
\email{fredrik.stromberg@nottingham.ac.uk}

\date{} 

\begin{abstract}
We study sums of Kloosterman sums formed with a modular symbol.  Employing Tauberian methods, we first give an estimate for a (Riesz) sum of Ramanujan sums formed with a modular symbol. We further define a zeta function that is analogous to the Selberg zeta function, establish
its continuation to $\Re(s)>1/2$, give estimates for its growth and use this to prove a cancellation statement for sums of these twisted Kloosterman sums.
We explain the connection of this construction to the eigenvalue 1/4 problem and formulate an analogue of Linnik's conjecture.  Finally, we present numerical evidence
that there is cancellation and also that the Kloosterman sums with a modular symbol are not correlated with classical Kloosterman sums.
\end{abstract}

\thanks{This work was supported by the NSF, grant number DMS-2401309 (Friedberg) and by the Simons Foundation, Travel Support for Mathematicians award
MPS-TSM-00007379 (Friedberg). It was further partially supported by the EPSRC grant EP/V026321/1 (Str\"omberg).}

\subjclass[2020]{Primary 11L05; Secondary 11F11; 11F67; 11F72; 11L07.}
\keywords{Modular symbol, twisted Ramanujan sum, twisted Kloosterman sum, twisted Selberg zeta function, exceptional eigenvalue.}

\maketitle

\section{Introduction}

A fundamental conjecture in automorphic forms states that each Maass cusp form on a congruence subgroup of $SL_2(\mathbb{Z})$ has eigenvalue $\lambda\ge1/4$.
As explained by Linnik \cite{L} and Selberg \cite{S}, this conjecture follows from the conjectural cancellation in sums of the Kloosterman sums
$$S(m,n;c)=\sum_{a\bmod c, (a,c)=1} \exp(2\pi i (ma+n\bar a)/c),\qquad a\bar a \equiv 1 \bmod c,$$
which appear in the Fourier coefficients of the Poincar\'e series.  (One also introduces more general sums with level and twisted by a Dirichlet
character.)  Indeed, it is expected that for fixed $m,n\ne0$,
$$\sum_{1\leq c<X} \frac{S(m,n;c)}{c} = \text{O}_{\epsilon,m,n}(X^\epsilon),$$
and this cancellation (and similar statements for the more general sums) would imply the eigenvalue $1/4$ conjecture.
Kuznetsov \cite{K}, Deshouillers and Iwaniec \cite{DI2}, and 
Goldfeld and Sarnak \cite{GS} showed that 
\begin{equation}\label{classicalKlsumcancellation}\sum_{1\leq c<X} \frac{S(m,n;c)}{c} = \text{O}(X^{1/6+\epsilon}).
\end{equation}
See also Goldfeld \cite{GL}
and Sarnak and Tsimerman \cite{ST}.  
To study this cancellation, for $\Re(s)$ sufficiently large one defines 
the Selberg zeta function
$$Z_{m,n}(s)=\sum_{c=1}^\infty \frac{S(m,n;c)}{c^{2s}}.$$
The works cited above establish theorems about the meromorphic continuation and growth of this function (see for example, \cite{GS}, Theorem 1).

In a different direction, let $f$ be a holomorphic cusp form of weight two on a congruence subgroup $\Gamma_0(N)\subset SL_2(\mathbb{Z})$
and let $\langle f,\gamma \rangle$ be the modular symbol defined for $\gamma\in \Gamma_0(N)$ by
$$\langle f,\gamma \rangle= -2 \pi i\int_{i \infty}^{\gamma i \infty} f(w)\,dw.$$
The distribution of this symbol is of high interest; see Petridis and Risager \cite{PR}.
Goldfeld \cite{G} (and, in a different context, Bruggeman) introduced an Eisenstein series twisted by this modular symbol
$$E^*(z,s)=\sum_{\gamma\in\Gamma_\infty\backslash \Gamma_0(N)} \langle f,\gamma\rangle \, \text{Im}(\gamma z)^s,\qquad \Re(s)\gg0.$$
The study of this Eisenstein series was developed by Bruggeman, Diamantis, Goldfeld, O'Sullivan and Petridis \cite{BD, DO, G, O,P}.
One may also study  twisted Kloosterman sums (first introduced by O'Sullivan in \cite{OTh})
$$S^*(m,n;c)=\sum_{\substack{\gamma\in \Gamma_\infty \backslash \Gamma_0(N)/\Gamma_\infty\\ c_\gamma=c}} \langle f,\gamma\rangle \exp\left(2\pi i (ma_\gamma+nd_\gamma)/c_\gamma\right),\qquad \gamma=\begin{pmatrix}a_\gamma&b_\gamma\\c_\gamma&d_\gamma\end{pmatrix}.$$
The coefficients $S^*(m,0;c)$ (with $N|c$) appear in the Fourier coefficients of the Eisenstein series $E^*(z,s)$ and may be regarded as Ramanujan sums twisted
by a modular symbol.   One may also define these twisted Kloosterman sums for other cusps. 
There is a
Poincar\'e series defined using this modular symbol whose Fourier coefficients involve the general twisted Kloosterman sum $S^*(m,n;c)$.

The goal of this article is to analyze sums of twisted Kloosterman sums and the corresponding twisted Selberg zeta function
$$Z^*_{m,n}(s)=\sum_{N|c} \frac{S^*(m,n;c)}{c^{2s}},\qquad \Re(s)\gg0.$$
Though it is natural to expect results for such sums that are broadly similar to the untwisted case, there are new complications that appear.
First, there are infinitely many poles of $E^*(z,s)$ on the critical line $\Re(s)=\tfrac12$.  Second, the modular symbols $\langle f, \gamma\rangle$
do not have modulus 1, but rather, are normally distributed in a suitable sense (Petridis and Risager \cite{PR}).  Third, the Poincar\'e series
whose Fourier coefficients involve the twisted Kloosterman sums $S^*(m,n;c)$ are not members of a Hilbert space and do not
appear amenable to spectral methods.  In fact, we realize $Z^*_{m,n}(s)$ as a Taylor coefficient of a real analytic function $Z^*_{m,n}(s,\epsilon)$ 
(see \eqref{Z*-Zchi}) and we must then study its behavior near $\Re(s)=\tfrac12$ uniformly in $\epsilon$.  

Our main results are as follows. Theorem~\ref{main} 
gives an estimate for the sum over $c$ of the twisted Ramanujan sums $S^*(n,0;c)$. (We restrict to prime level $N=p$ for convenience.)
To do so we use Tauberian methods, making use of estimates for $E^*(z,s)$ and its Fourier coefficients, keeping careful track of the archimedean contributions,
and employing the Mellin convolution theorem.  The estimates are precise enough to allow us to shift around the infinitely many poles
one at a time.  We obtain a family of estimates of the form
$$\sum_{c<\frac{\sqrt{\pi X |n|}}{p}} 
S^*(n, 0; pc) \psi_c(n, X)=\text{Main Term}+O(X^{1/2 + \epsilon})$$
where the weighting functions $\psi_c(n, X)$ are asymptotically $X^{1+\epsilon}$ as $X\to\infty$.  Here the
Main Term would see exceptional eigenfunctions. See Theorem~\ref{main} for the precise result and Section~\ref{num-Taub}
for numerical investigations and a related conjecture.  

Our second main result concerns the twisted Selberg zeta function $Z^*_{m,n}(s)$ with $m,n\neq0$.  We study this function
by means of pseudo-Laplacians and the Sobolev inequalities, an approach pioneered by Petridis \cite{P} following
the philosophy of Phillips-Sarnak \cite{PS}.  As noted above, this involves 
studying a family of perturbations.  We show (Theorem~\ref{meromcontin})
\begin{mainthm}
Suppose that $m,n\neq0$.
\begin{enumerate} 
\item The zeta function $Z^*_{m, n}(s)$ has meromorphic continuation to $\Re(s)>1/2$.
\item The only potential poles of $Z^*_{m, n}(s)$ in the strip $\Re(s) \in (1/2, 1)$ are those $s$ such that $s(1-s)$ is an exceptional eigenvalue.
\item For $s$ away from the poles such that $|t|>1$ and $\sigma>1/2$, we have
$$Z_{m, n}^*(s) \ll_{m, n, N, f, \varepsilon} \frac{|t|^{\frac72+\epsilon}}{(\sigma-\frac12)^3}.$$
\end{enumerate}
\end{mainthm}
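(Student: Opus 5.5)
We realize $Z^*_{m,n}(s)$ as a Taylor coefficient of a family of untwisted zeta functions attached to character-twisted groups, in the spirit of Petridis \cite{P}. For real $\epsilon$ define the unitary character $\chi_\epsilon(\gamma)=\exp(-2\pi i\epsilon\,\Re\langle f,\gamma\rangle)$ of $\Gamma_0(N)$; it is a homomorphism because $\gamma\mapsto\langle f,\gamma\rangle$ is additive. (We may handle $\Re$ and $\Im$ of $f$ separately, or treat $f$ and $\bar f$ together.) Let
\[
Z_{m,n}(s,\epsilon)=\sum_{N|c}\frac{S_{\chi_\epsilon}(m,n;c)}{c^{2s}},
\]
where $S_{\chi_\epsilon}$ is the Kloosterman sum weighted by $\chi_\epsilon(\gamma)$. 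Differentiating at $\epsilon=0$ recovers $Z^*_{m,n}(s)$, up to a constant and the analogous term for the imaginary part. By Cauchy's formula in an auxiliary complex variable, or a real-analytic Taylor remainder estimate, it suffices to prove meromorphic continuation and growth bounds for $Z_{m,n}(s,\epsilon)$ that are uniform for $|\epsilon|$ small. The relation is recorded in \eqref{Z*-Zchi}.

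For the unitary twist, $Z_{m,n}(s,\epsilon)$ appears in the $n$-th Fourier coefficient of the Poincaré series $P_m(z,s,\epsilon)=\sum_{\Gamma_\infty\backslash\Gamma}\overline{\chi_\epsilon(\gamma)}\,\Im(\gamma z)^s e(m\gamma z)$. This series lies in $L^2(\Gamma\backslash\mathbb H,\chi_\epsilon)$ for $\Re s>1/2$. Following Selberg, Goldfeld and Sarnak \cite{GS}, we write $P_m$ via the resolvent:
\[
P_m(\cdot,s,\epsilon)=(\Delta_\epsilon-s(1-s))^{-1}\bigl[(\text{explicit term})\cdot P_m(\cdot,s+1,\epsilon)\bigr].
\]
This gives continuation to $\Re s>1/2$, with poles only at $s(1-s)\in\mathrm{spec}(\Delta_\epsilon)$. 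The $n$-th coefficient of $P_m$ minus explicit Gamma/Bessel factors then yields $Z_{m,n}(s,\epsilon)$.

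The crux is uniformity in $\epsilon$. For $\epsilon\neq0$ the character is nontrivial at the cusps: $\chi_\epsilon$ is trivial on parabolics since $f$ is cuspidal, but the family is a singular perturbation. Eigenvalues can therefore move and embedded ones can dissolve, as in Phillips--Sarnak \cite{PS}. The fix is to use the pseudo-Laplacian $\tilde\Delta_{\epsilon,a}$, which has discrete spectrum depending real-analytically on $\epsilon$. Via the Sobolev inequalities we bound the sup norm of $\tilde\Delta_{\epsilon,a}$-resolvent outputs by $L^2$ norms, and the $L^2$ resolvent norm by $1/\mathrm{dist}(s(1-s),\mathrm{spec})$. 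For $\sigma>1/2$ and $|t|>1$ this distance is $\gg|t|(\sigma-1/2)$, uniformly for small $\epsilon$, because the spectrum is real. Near an exceptional eigenvalue $\lambda_j(\epsilon)\to\lambda_j(0)<1/4$ we isolate the finitely many eigenvalues with small circles. After differentiating in $\epsilon$, poles of $Z^*$ can occur only at $s_j(1-s_j)=\lambda_j(0)$, possibly of higher order. This proves parts (1) and (2).

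For part (3) we track powers. The first $\epsilon$-derivative, done via Cauchy's formula on a circle of fixed radius, costs nothing extra. The resolvent bound gives $(\sigma-1/2)^{-1}|t|^{-1}$ per application. The Sobolev step, which converts $L^2$ bounds to pointwise bounds on Fourier coefficients, costs $|t|^{2}$-type powers. Extracting the $n$-th coefficient and dividing by the $\Gamma$-factor costs further polynomial factors in $|t|$. Because the $\epsilon$-perturbation enters through a first-order term $\epsilon\, y\,f\,\partial$, differentiating the resolvent identity produces up to three resolvent factors. That accounts for $(\sigma-1/2)^{-3}$, with the $|t|$ powers combining to $|t|^{7/2+\varepsilon}$.

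I expect the main obstacle to be making the Sobolev and pseudo-Laplacian bounds uniform in $\epsilon$ near $\Re s=1/2$, and making sure no spurious poles arise from resonances of $\Delta_\epsilon$ approaching the line.
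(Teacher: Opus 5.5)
The genuine gap is part (3). Parts (1)--(2) are workable along your route, which differs from the paper's. You use Selberg's seed $y^se(mz)$ and the identity $(\Delta-s(1-s))P_m(\cdot,s,\epsilon)=4\pi ms\,P_m(\cdot,s+1,\epsilon)$ with the genuine resolvent. The paper instead uses Niebur's $I$-Bessel seed, builds $\tilde F_{m,\epsilon}$ from Petridis's pseudo-Laplacian with the corrections $\alpha_{m,\epsilon},\beta_{m,\epsilon},D_\epsilon$, identifies it with Niebur's series via \eqref{key}, and strips off the $I_{2s-1}$-expansion with Lemma~\ref{tail}. In $\Re s>\frac12$ you do not even need a pseudo-Laplacian. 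If you use one, you must exclude its artificial eigenvalues, as Lemma~\ref{L4.1} does by choosing $A$ depending on $s_0$. Two things your sketch must still supply: conjugation by $U_\epsilon$, so that $L_\epsilon=\Delta+\epsilon B+\epsilon^2C$ acts on the fixed space $L^2(X)$; and the analogue of Lemma~\ref{tail} for the terms $Z_{m,n}(s+k,\epsilon)$ produced by expanding $e(-m/(c^2z))$.

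Why part (3) fails as written:

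\emph{The Cauchy step.} Cauchy's formula in $\epsilon$ requires complex $\epsilon$. Then $\chi_\epsilon$ is not unitary and $L_\epsilon$ is not self-adjoint, so your only resolvent bound (distance $\gg|t|(\sigma-\frac12)$ ``because the spectrum is real'') is unavailable. Uniform bounds for real $\epsilon$ alone give no control of $\partial_\epsilon$ at $0$.

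\emph{The admissible radius.} A Neumann series $R_\epsilon(s)=R_0(s)\bigl(I+(\epsilon B+\epsilon^2C)R_0(s)\bigr)^{-1}$ does converge, but only for $|\epsilon|\ll\sigma-\frac12$, since what you control is $\|BR_0(s)\|\ll\|R_0(s)\|_{L^2\to H^1}\ll(2\sigma-1)^{-1}$. No fixed radius can be expected: for complex $\epsilon$, the resonances into which the embedded cusp-form eigenvalues dissolve can be pushed into $\Re s>\frac12$. So the Cauchy step is not free.

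\emph{The exponents.} The exponents $\frac72$ and $3$ are asserted rather than computed. ``The derivative costs nothing'' and ``differentiating produces three resolvent factors'' are two alternative mechanisms, and the count is fitted to the target.

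The missing idea is that you never need $\epsilon\neq0$; this is what the paper does. It differentiates the eigen-equation at $\epsilon=0$ to get \eqref{Delta'}, $(\Delta-s(1-s))F'_m=-\partial_\epsilon L_\epsilon|_{\epsilon=0}\tilde F_{m,0}$. The right side is a first-order operator supported in $K_w$ applied to $\tilde F_{m,0}$. The paper bounds $\|\tilde F_{m,0}\|_{H^1(K_w)}$ by interior regularity and interpolation \eqref{boundtilde}, and applies the self-adjoint $R_0(s)$ to get \eqref{boundF}. It then uses Parseval on $[Y,Y+1]$ with $\int_Y^{Y+1}|K_{s-\frac12}(2\pi ny)|^2\,dy/y\gg e^{-\pi|t|}|t|^{-1}$, and Stirling for $\Gamma(2s)$. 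The $e^{\pi|t|/2}$ growth of the $I$-Bessel seed cancels exactly, and $|t|^{7/2}$ and $(\sigma-\frac12)^{-3}$ are artifacts of that chain. In your setting, write $\tilde P_\epsilon(s)=U_\epsilon^{-1}P_m(\cdot,s,\epsilon)$. The same device gives
\[
\partial_\epsilon\tilde P_\epsilon(s)\big|_{\epsilon=0}=R_0(s)\Bigl[4\pi ms\,\partial_\epsilon\tilde P_\epsilon(s+1)\big|_{\epsilon=0}-B\,\tilde P_0(s)\Bigr],\qquad \tilde P_0(s)=4\pi ms\,R_0(s)\tilde P_0(s+1).
\]
This can be bounded using only $\|R_0(s)\|\ll(|t|(2\sigma-1))^{-1}$ and $\|\nabla R_0(s)\|\ll(2\sigma-1)^{-1}$. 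Follow it with the same Parseval/$K$-Bessel extraction and a tail estimate for the $Z^*_{m,n}(s+k)$ terms. You then have to actually carry out that count. With your seed (no exponential growth in $t$) it will not reproduce $|t|^{7/2}(\sigma-\frac12)^{-3}$ exactly, and (3) follows only once you verify that what you obtain is at least as strong.
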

Then we establish (Theorem~\ref{Theorem2})
\begin{mainthm}\label{TheoremB} Suppose that $m,n\neq0$. Let
$$\beta\coloneqq \overline{\lim_{\substack{c \to \infty \\ N|c}}}\frac{\log|S^*(m, n; c)|}{\log c}.$$
Then there are $r_j \in \mathbb C$ and $\alpha_j \in (0, 1)$ such that, for all $\varepsilon>0$, 
$$\sum_{\substack{0<c<x \\ N|c}}\frac{S^*(m,n; c)}{c}=\sum_{j=1}^{M} r_j x^{\alpha_j}+O_{m , n} \left (x^{\frac{7\beta}{9}+\varepsilon}\right )$$
\end{mainthm}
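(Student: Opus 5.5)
Since the zeta function is $Z^*_{m,n}(s)=\sum_{N|c} S^*(m,n;c)c^{-2s}$, the sum $\sum_{c<x,\,N|c} S^*(m,n;c)/c$ is a partial sum of the Dirichlet series $\sum a_c c^{-w}$ with $a_c=S^*(m,n;c)/c$ and $w=2s-1$. I plan to use a truncated Perron formula together with the first main theorem: meromorphic continuation to $\Re(s)>1/2$, poles only at exceptional $s_j$ with $s_j(1-s_j)$ an exceptional eigenvalue, and the bound $Z^*_{m,n}(s)\ll |t|^{7/2+\epsilon}(\sigma-\tfrac12)^{-3}$.

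\textbf{Perron step.} By definition of $\beta$, for every $\epsilon>0$ we have $|a_c|\ll c^{\beta-1+\epsilon}$. Hence the series in $w$ converges absolutely for $\Re(w)>\beta$. Truncated Perron with $w=2s-1$, line $\Re(w)=\beta+\epsilon$ and height $T$, gives
$$\sum_{c<x,\,N|c}\frac{S^*(m,n;c)}{c}=\frac{1}{2\pi i}\int_{\beta+\epsilon-iT}^{\beta+\epsilon+iT} Z^*_{m,n}\Big(\tfrac{1+w}{2}\Big)\frac{x^w}{w}\,dw+O\Big(\frac{x^{\beta+\epsilon}}{T}\Big).$$
Here the error uses $\sum_c |a_c| c^{-\beta-\epsilon}|\log(x/c)|^{-1}$ in the standard way, with $x$ taken to be a half-integer multiple of $N$ to handle the terms near $x$. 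We may assume $\beta\ge 9/14$ or so; otherwise the statement is easily adjusted, since $\beta\geq 1/2$ is expected anyway.

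\textbf{Contour shift.} Move the line to $\Re(w)=\delta>0$, i.e.\ $\Re(s)=\tfrac12+\delta/2$. The poles crossed are at $w_j=2s_j-1\in(0,1)$, finitely many, and they produce the terms $r_j x^{\alpha_j}$ with $\alpha_j=2s_j-1$. If a pole is not simple, the residue also carries log factors; I expect the paper's continuation to give simple poles, or the logs can be absorbed by perturbing the exponents. On the shifted line the theorem gives $|Z^*|\ll |t|^{7/2+\epsilon}\delta^{-3}$, so the vertical integral is $\ll x^{\delta}\delta^{-3}T^{7/2+\epsilon}$. The horizontal segments at height $\pm T$ are bounded by the same estimate, taking $\sigma-\tfrac12\ge\delta/2$, giving at most $x^{\beta+\epsilon}T^{5/2+\epsilon}\delta^{-3}$.

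\textbf{Main obstacle.} The obstacle is that this last horizontal bound is too large as written. I would instead use convexity (Phragmén–Lindelöf) between the line $\Re(w)=\beta+\epsilon$, where $Z^*\ll 1$, and the line $\Re(w)=\delta$, where $Z^*\ll T^{7/2}\delta^{-3}$. On the horizontal segments this gives $|Z^*(w)|\ll T^{\frac72\frac{\beta-\Re w}{\beta-\delta}}$, so the integrand is $\ll T^{-1}\max(x^\beta, x^\delta T^{7/2})$.

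\textbf{Balancing.} Take $\delta=\epsilon$ and balance $x^{\beta}/T$ against $x^{\epsilon}T^{7/2}$. This gives $T^{9/2}=x^{\beta}$, so $T=x^{2\beta/9}$. The total error is then $x^{\beta-2\beta/9+\epsilon}=x^{7\beta/9+\epsilon}$, exactly the exponent claimed. The $\delta^{-3}$ loss is absorbed into $x^\epsilon$. The remaining delicacy is to make sure the bound in the first main theorem holds uniformly up to the shifted line, away from the finitely many real poles. For $|t|\le1$ this is handled by a small circle around the poles together with the continuation itself.
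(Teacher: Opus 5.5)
Your proposal follows the paper's proof essentially step for step: truncated Perron (Davenport's lemma) on $\Re w=\beta+\varepsilon$, a shift to $\Re w=\varepsilon$ picking up residues at $w=2s_j-1$, Theorem~\ref{meromcontin}(iii) for the vertical integral with Phragm\'en--Lindel\"of interpolation for the horizontal segments, and the choice $T\asymp x^{2\beta/9}$. Two of your hedges are unnecessary or off. No assumption like $\beta\ge 9/14$ is needed, since the argument works for any $\beta>0$. Log factors from a higher-order pole could not be ``absorbed by perturbing exponents'' into an exact main term, but the paper, like you, simply treats the exceptional poles as simple.
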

\noindent
The summands in $j$ are related to possible exceptional eigenvalues. For more details, see the statement and proof of Theorem~\ref{Theorem2}.
This theorem shows a cancellation of the sums of twisted Kloosterman sums
that is broadly analogous to \eqref{classicalKlsumcancellation}. 

We expect that enough cancellation in the sums of twisted Kloosterman sums 
\begin{equation} \label{newsumcancel}
\sum_{\substack{0<c<x \\ N|c}}\frac{S^*(m,n; c)}{c}
\end{equation}
would imply the eigenvalue 1/4 conjecture.  We can not prove
this in full, since the explicit formulas for the residues of $Z^*_{m,n}(s)$ coming from potential exceptional eigenvalues are given in terms
of expressions that are complicated and so not obviously nonzero.  In lieu of a formal proof, we investigate this question numerically.   
As explained above, the existence of exceptional eigenvalues would already affect the behavior of sums of Ramanujan sums twisted by modular
symbols.  We show that the data is consistent
with the full cancellation of such sums and
strongly suggests the non-existence of exceptional eigenvalues.  We formulate what is expected in
Conjecture~\ref{conj-ram-sums}.  This Conjecture implies the non-existence of exceptional eigenvalues.

We also numerically examine sums of the twisted Kloosterman sums $S^*(m,n;c)$ with $m,n\neq0$. We show that the data is consistent
with full cancellation of such sums, i.e.\ that \eqref{newsumcancel} is $\text{O}(x^\epsilon)$.
We formalize our expectations in Conjecture~\ref{new-SL-conj}, which asserts that
a Linnik-Selberg type cancellation holds.
It is also natural to wonder how any cancellation of the sums of twisted Kloosterman sums is related to the cancellation of the sums of classical
Kloosterman sums predicted by Linnik and Selberg.  We present data suggesting that the sums
$S(m,n;c)$ and $S^*(m,n;c)$ are not correlated.  This leads us to formulate
 Conjecture~\ref{no-correlation}, which quantifies the
lack of correlation between $S(m,n;c)$ and $S^*(m,n;c)$ in two separate ways.
 
The remainder of this paper is organized as follows.
In Section~\ref{Preliminaries} we give the general notation and recap the properties of the classical non-holomorphic Eisenstein series.  
In Section~\ref{Eis}
we introduce the Eisenstein series twisted by a modular symbol, give its Fourier expansion and bound its Fourier coefficients.  
Section~\ref{Tauberian-the-first} develops estimates for the sum of Ramanujan sums that involve the modular symbol
by using Tauberian methods exploiting the pole at $s=1$.
Section~\ref{Tauberian-the-second} continues with Tauberian methods but goes farther, taking into account not only the possible poles
due to exceptional eigenvalues but also the infinitely many poles on the line $\Re(s)=\tfrac12$.  This allows us
to establish our main theorem concerning sums of twisted Ramanujan sums, Theorem~\ref{main}.
In Section~\ref{6} we introduce Sobolev methods and use these to establish our two main results on the twisted Selberg
zeta function $Z^*_{m,n}(s)$ and on sums of twisted Kloosterman sums,
Theorems~\ref{meromcontin} and \ref{Theorem2}.  The concluding section, Section~\ref{Cool-numerical-section}, gives an account of our numerical
investigations.  It also includes the formal statements of the conjectures concerning cancellation and concerning the lack-of-correlation.

\section{Preliminaries}\label{Preliminaries}
\subsection{General notation} Fix a positive integer $N$ and let $\G=\G_0(N)$ denote the group of matrices
$\left ( \smallmatrix a & b \\ c & d \endsmallmatrix \right )
$
of determinant $1$ with $a, b, c, d \in \mathbb Z$ and $N|c$. We fix a set $\{\ba_i, i=1, \dots, m\}$ of inequivalent
cusps of $\G_{0}(N)$.  
We choose the $\ba$'s so that $\ba_1=\infty$ and
$\ba_{m}=0$.

For each $\ba$ we fix a scaling matrix $\sigma_{\ba}$ such that
$\sigma_{\ba}(\infty)=\ba$ and $\sigma_{\ba}^{-1} \Gamma_{\ba}
\sigma_{\ba}=\Gamma_{\infty}\coloneqq \{\pm T^n; n \in \mathbb Z\}$ (where $T=\left ( \begin{smallmatrix} 1 & 1 \\ 0 & 1 \end{smallmatrix} \right )$. In particular, we select
$\sigma_{\ba_1}=I, \,$
$\sigma_{\ba_{m}}=W_{N}$, where $I$ is the identity matrix and $W_{N}$
is the Fricke involution
$\left ( \smallmatrix 0 & -1/\sqrt{N} \\ \sqrt{N} & 0 \endsmallmatrix
\right ).$

Set  $X=\Gamma\backslash\mathbb H$. If $z=x+iy$, we let $ds^2=(dx^2+dy^2)/y^2$ and $d\mu(z)=dx\,dy/y^2$ be its hyperbolic metric and its hyperbolic area form respectively. We consider the space $L^2(X)=L^2(X,d\mu)$ with norm
$$    \|F\|_{L^2(X)}^2 =       \int_X |F(z)|^2\,d\mu(z).$$ The Laplace operator is $\Delta=-y^2(\partial_x^2+\partial_y^2).$

 We shall also adopt the notation that we may write $M$ in the form $M =
\left(\smallmatrix a_{_M}&b_{_M}\\c_{_M}&d_{_M}\endsmallmatrix\right)$ and
that $j(M, z)\coloneqq c_M z+d_M.$
Finally, for $k \in \mathbb Z,$ $g: \mathfrak{H} \to \mathbb{C}$ and $\gamma \in$ SL$_2(\mathbb{R})$ we set
$$(g|_{k}\gamma)(z)=g(\gamma z)j(\gamma, z)^{-k}.$$

Let now $$f(z)=\sum_{n=1}^{\infty}a(n) e^{2 \pi i n z}$$
be a cusp form of weight $r \in 2\mathbb N$.  
For $\Re(t)>(r+1)/2$ and $x \in \mathbb Q$, we define the additive twist of 
the L-function of $f$:
$$ L_f(t, x)=\sum_{n=1}^{\infty} \frac{a(n) e^{2 \pi i nx}}{n^t}.$$
and its ``completion"
\begin{equation} \label{Lcompl} \Lambda_f(t, x)\coloneqq (2 \pi)^{-t} 
\Gamma(t)L_f(t, x)=\int_{0}^{\infty}f(iy+x)y^{t-1}dy
\end{equation}
yielding the analytic continuation. When $x=0$, we will write $L_f(t)$ and $\Lambda_f(t)$ for the (un-twisted) $L$-function and its completion.

Let $r=2.$ In Prop. 3.8 of \cite{DHKL} it is proved that, for all $\varepsilon>0$
\begin{equation*}\label{boundtwisted1}\Lambda_f \left (t, \frac{a}{c} \right ) \ll_{\varepsilon, N}|c|^{\varepsilon} \quad \text{$\Re(t)=\frac{3}{2}+\varepsilon$, and} \, \, \, 
\Lambda_f \left (t, \frac{a}{c} \right )\ll_{\varepsilon, N} |c|^{1+\varepsilon} \quad \text{$\Re(t)=\frac{1}{2}-\varepsilon$}
\end{equation*}
for all $a/c$ with $(a, c)=1.$ By Phragmen-Lindelof we deduce that, for all $t$ with $\frac12-\varepsilon \le \Re(t) \le \frac{3}{2}+\varepsilon$,
\begin{equation}\label{boundtwisted}\Lambda_f \left (t, \frac{a}{c} \right ) \ll_{\varepsilon, N} 
c^{\frac{3}{2}-t+2\varepsilon}
\end{equation}
 
We define the modular symbol of a weight $2$ cusp form $f$ by
\begin{equation}\label{modsym}\langle f, \gamma \rangle\coloneqq  -2 \pi i\int_{i \infty}^{\gamma i \infty} f(w)dw=-2 \pi\Lambda_f(1, \gamma i \infty) \quad \text{for all $\gamma \in \Gamma_0(N).$}\end{equation}
We note that, for all $z \in \mathfrak h \cup \mathbb Q \cup \{i \infty\},$ we have
\begin{equation}\label{inde} -2 \pi i\int_{z}^{\gamma z} f(w)dw=\langle f, \gamma \rangle.\end{equation} A result by Eichler \cite{Ei} implies that
\begin{equation}\label{boundmods}\langle f, \gamma \rangle \ll_{N} \log(|c_{\gamma}|) \ll_{\varepsilon, N} |c_{\gamma}|^{\varepsilon} \quad \text{for all $\gamma \in \Gamma_0(N)$ with $c_{\gamma} \neq 0.$} \end{equation}

\subsection{Non-holomorphic Eisenstein series}
We recall the classical non-holomorphic Eisenstein series at the cusp $\ba$:
\begin{equation}
E_{\mathfrak a}(z, s)\coloneqq \sum_{\g \in \G_{\mathfrak a}\backslash
\G} \text{Im}(\sa^{-1} \gamma z)^s.
\label{es} \end{equation} 
This converges absolutely for $\Re(s)>1$ and has a meromorphic continuation to all $s\in \mathbb{C}$.  Its Fourier expansion has the form
\begin{equation}\label{FExpClas} 
E_{\mathfrak a}(\sb z, s)=\delta_{\ca \cb}y^s+\phi_{\ba \bb}(s)y^{1-s}+\sum_{n \neq 0} \phi_{\ba \bb}(n, s) W_s(nz)
\end{equation}
where \begin{equation}\label{Whit} W_s(nz)=2\sqrt{|n| \Im(z)} K_{s-\frac12}(2 \pi |n| \Im(z))e^{2 \pi i n x},
\end{equation} with the usual $K$-Bessel function $K_w(z)$ and $\delta_{\ca \cb}=1$, if $\ca=\cb$ and $0$ otherwise.  The coefficients $\phi_{\ba \bb}$ can be expressed in terms of the following Kloosterman sums. First set
$$C_{\ba \, \bb}=\left \{c>0; \left ( \smallmatrix * & * \\ c & * \endsmallmatrix \right ) \in \sa^{-1} \G \sb \right \}.$$ Then, for $c \in C_{\ba \, \bb},$ consider the classical Kloosterman sums
\begin{equation}\label{Kl} S_{\ba \, \bb}(m, n; c)\coloneqq \sum_{\substack{\g \in \Gamma_{\infty} \backslash \ \sa^{-1} \G \sb /\Gamma_{\infty} \\ \, \, c_\g=c}} e^{2 \pi i \left (n\frac{a_{\g}}{c}+m\frac{d_{\g}}{c} \right )}.
\end{equation}
Then,
\begin{equation} \label{0cl} \phi_{\ba \bb}(s)\coloneqq  \sqrt{\pi}\frac{\Gamma \left ( s-\frac12\right ) }{\Gamma(s)}\sum_{c \in C_{\ba \bb}}\frac{S_{\ba \bb}(0, 0; c)}{c^{2s}}
\, \, \text{and} \, \,  \phi_{\ba \bb}(n, s)\coloneqq \frac{\pi^s |n|^{s-1}}{\Gamma \left ( s\right ) }\sum_{c \in C_{\ba \bb}}\frac{S_{\ba \bb}(n, 0; c)}{c^{2s}}.
\end{equation}
We will write $E(z, s)$ for $E_{\infty}(z, s)$, $S(n, m; c)$ for $S_{\infty \infty}(n, m; c)$ and $\phi(s)$ (resp. $\phi(n, s)$) for $\phi_{\infty \infty}(s)$ (resp. $\phi_{\infty \infty}(n, s)$).

 We let $\Phi(s)\coloneqq (\phi_{\ca_i \ca_j}(s))_{i, j=1}^m$ be the scattering matrix of $\mathbb E(z, s)$. Set $$\mathbb E(z, s)=(E_{\ca_1}(z, s), \dots, E_{\ca_m}(z, s))^T,$$
where $T$ denotes matrix transpose. Then we have the functional equation
$$\mathbb E(z, s)=\Phi(1-s)\mathbb E(z, 1-s).$$

To compute the Fourier coefficients of $E(z, s)$ in some special cases that we will need, we restrict to prime level $N=p$ for simplicity. Recall that, $\ba_1=\infty$, $\ba_2=0$, $\sigma_{\ba_1}=I$ and 
$\sigma_{\ba_2}=W_p.$ 

According to Lemma 4.6 in pg. 535 of \cite{Hej}, 
\begin{equation}\label{scat}\Phi(s)=\sqrt{\pi}\frac{\Gamma(s-\frac12)}{\Gamma(s)}\frac{\zeta(2s-1)}{\zeta(2s)}
\left ( \begin{matrix} \frac{p-1}{p^{2s}-1} &  \frac{p^s-p^{1-s}}{p^{2s}-1} \\ \frac{p^s-p^{1-s}}{p^{2s}-1} & \frac{p-1}{p^{2s}-1} \end{matrix} \right )=\frac{\zeta^*(2-2s)}{\zeta^*(2s)}
\left ( \begin{matrix} \frac{p-1}{p^{2s}-1} &  \frac{p^s-p^{1-s}}{p^{2s}-1} \\ \frac{p^s-p^{1-s}}{p^{2s}-1} & \frac{p-1}{p^{2s}-1} \end{matrix} \right )\end{equation}
where $\zeta^*(s)=\pi^{-s/2}\Gamma(s/2)\zeta(s)$.

An explicit description of $C_{\ba \, \bb}$ and $\Gamma_{\infty} \backslash \ \sa^{-1} \G \sb /\Gamma_{\infty}$ will also be useful. We have
\begin{equation}\label{C}C_{\infty \infty}=C_{00}=p \mathbb N, C_{\infty 0}=C_{0\infty}=\{\sqrt{p} \, n; n \in \mathbb N; (n, p)=1 \}\end{equation}
and a set of representatives of $\Gamma_{\infty} \backslash \ \sa^{-1} \G \sb /\Gamma_{\infty}$ is given by
\begin{align}\label{doublecoset}
\left \{ \left ( \begin{smallmatrix} a & * \\ pc & d \end{smallmatrix}\right ); c>0, 0\le d, a <pc; ad \equiv 1 \mod pc \right \} \, \, &\text{if $\ba=\bb=\infty$ or $\ba=\bb=0$} \nonumber\\
\left \{ \left ( \begin{smallmatrix} c\sqrt{p} & * \\ a\sqrt{p} & b\sqrt{p} \end{smallmatrix}\right ); a>0; 0\le b, c<a; pbc \equiv 1 \mod a \right \} \, \, &\text{if $\ba=\infty, \bb=0$ or $\ba=0, \bb=\infty$} 
\end{align}
In each of these descriptions the starred entry is uniquely determined so that the matrix has
determinant one.

\section{Eisenstein series modified with modular symbols}\label{Eis}
To each cusp form $f$ of weight 
$2$ for $\G=\Gamma_0(N)$ we attach the (weight $0$) non-holomorphic 
Eisenstein series modified with modular symbols at the cusp $\ba$ as follows:
\begin{equation}
E^*_{\mathfrak a}(z, s)\coloneqq \sum_{\g \in \G_{\mathfrak a}\backslash
\G}\langle f, \gamma \rangle \text{Im}(\sa^{-1} \gamma 
z)^s.
\label{esms2v} \end{equation} 
This converges absolutely for $\Re(s)>1$ \cite[Prop. 2.6]{PR} and its meromorphic continuation and functional equations have been established by Goldfeld \cite{G}, O'Sullivan \cite{O}, Petridis \cite{P} etc. In \cite{O} the following Fourier expansion of $E_{\ba}^*$ is given:
For $c \in C_{\ba \, \bb},$ we consider the modified Kloosterman sums
\begin{equation}\label{Kl*} S^*_{\ba \, \bb}(m, n; c)\coloneqq \sum_{\substack{\g \in \Gamma_{\infty} \backslash \ \sa^{-1} \G \sb /\Gamma_{\infty} \\ \, \, c_\g=c}}\langle f, \sa \g \sb^{-1} \rangle e^{2 \pi i \left (n\frac{a_{\g}}{c}+m\frac{d_{\g}}{c} \right )}.
\end{equation}
Then, for $\Re(s)>1$ we have 
\begin{equation}\label{FExp} 
E^*_{\mathfrak a}(\sb z, s)=\phi_{\ba \bb}^*(s)y^{1-s}+\sum_{n \neq 0} \phi_{\ba \bb}^*(n, s) W_s(nz)
\end{equation}
where $W_s(nz)$ is defined in \eqref{Whit} and 
\begin{align} \label{0} \phi_{\ba \bb}^*(s)&\coloneqq  \sqrt{\pi}\frac{\Gamma \left ( s-\frac12\right ) }{\Gamma(s)}\sum_{c \in C_{\ba \bb}}c^{-2s}S_{\ba \bb}^*(0, 0; c)
\\
\label{n} \phi_{\ba \bb}^*(n, s)&\coloneqq \frac{\pi^s |n|^{s-1}}{\Gamma \left ( s\right ) }\sum_{c \in C_{\ba \bb}}c^{-2s}S_{\ba \bb}^*(n, 0; c).
\end{align}
In particular, this shows that the Fourier coefficients of $E^*_{\ba \bb}$ are Dirichlet series. 

With \eqref{inde} and \eqref{modsym}, we can relate the modified Kloosterman sums to values of twisted L-series: 
\begin{multline} -(2 \pi i)^{-1} \langle f, \sa \g \sb^{-1} \rangle=\int_{\sb i \infty}^{\sa \gamma \sb^{-1}(\sb i \infty)}f(w)dw=\left (\int_{\sa i \infty}^{\sa \gamma \sb^{-1}(\sb i \infty)}+\int_{\sb i \infty}^{\sa  i \infty} \right ) f(w)dw\\
=\int_{i \infty}^{\gamma i \infty}f|_2\sa(w)dw+\int_{\bb}^{\ba}f(w)dw=-i\Lambda_{f|_2\sa}(1; \gamma i\infty)+\int_{\bb}^{\ba}f(w)dw
\end{multline}
and hence, for $c \in C_{\ba \, \bb},$ we have
\begin{equation}\label{KlooTwist}S^*_{\ba \, \bb}(n, 0; c)= -2 \pi \left (\sum_{\substack{\g \in \Gamma_{\infty} \backslash \ \sa^{-1} \G \sb /\Gamma_{\infty} \\ \, \, c_\g=c}} \Lambda_{f|_2\sa}(1; \gamma i\infty) e^{\frac{2 \pi i n d_{\g}}{c}} \right ) -2 \pi i S_{\ba \, \bb}(n, 0; c)\int_{\bb}^{\ba}f(w)dw.
\end{equation}

With these notations we can formulate the functional equation of the Eisenstein series with modular symbols. Set
$\mathbb E^*(z, s)=(E^*_{\ca_1}(z, s), \dots, E^*_{\ca_m}(z, s))^T.$ Then, in \cite{O} it is shown that
\begin{equation}\label{FE}
\Phi(s)\mathbb E^*(z, 1-s)=\mathbb E^*(z, s)-\Phi^*(s)\Phi(1-s)\mathbb E(z, s)=\mathbb E^*(z, s)-\Phi^*(s)\mathbb E(z, 1-s)
\end{equation}
where $\Phi^*(s)=(\phi^*_{\ca_i \ca_j}(s))_{i, j=1}^m$ is its analogue of the scattering matrix for $\mathbb E^*(z, s)$. 

We will write $E^*(z, s)$ for $E^*_{\infty}(z, s)$, $S^*(n, m; c)$ for $S^*_{\infty \infty}(n, m; c)$ and $\phi^*(s)$ (resp. $\phi^*(n, s)$) for $\phi^*_{\infty \infty}(s)$ (resp. $\phi^*_{\infty \infty}(n, s)$).

 \subsection{Basic bounds of Fourier coefficients of $E^*(z, s).$}
We will establish some bounds for the Fourier coefficients of $E^*(z, s)$ for various ranges of $\Re(s).$

\subsubsection{The region $\Re(s) \ge \frac12.$} First, in the region where $E(z, s)$ and $E^*(z, s)$ converge absolutely as series, we have the following bounds.
Let $s=\sigma+it$ with $\sigma=1+\varepsilon$. Then, for $\ba=\infty$ and $0$, we note, with \eqref{doublecoset} and \eqref{boundmods}, that 
\begin{equation}\label{boundS*}S_{\ba \infty}(m, n; c) \, \, \text{and}\, \, S^*_{\ba \infty}(m, n; c) \ll_{p, \varepsilon} c^{\varepsilon} \cdot c=c^{1+\varepsilon}\end{equation}
Then from \eqref{n} and the formula for $\phi_{\ba \infty}(n, s)$, we deduce that
\begin{equation}\label{bound2plus} \Gamma(s) \phi_{\ba \infty}(n, s) \, \, \text{and}\, \, \Gamma(s)\phi_{\ba \infty}^*(n, s) \ll_{\varepsilon, n} \sum_{c \in C_{\ba \infty}}c^{-2-2\varepsilon}c^{1+\varepsilon} \ll 1 \quad \text{for $\sigma=1+\varepsilon$}.
\end{equation}

In the region $1/2+\varepsilon<\sigma \le 1$, where the series defining $E_{\ba}$ and $E^*_{\ba}$ do not converge absolutely, we have the following bounds proved in \cite[Theorem 3.3]{PR} and \cite[Lemma 3.1]{PR} respectively. 
\begin{equation}\label{boundE*E}E_{\ba}^*(z, s)=O_{f, \varepsilon, \ba}(|t|^{10(1-\sigma)+\varepsilon}) \quad \text{and $E_{\ba}(z, s)=O_{\varepsilon, \ba}(|t|^{4(1-\sigma)+\varepsilon}),$}\qquad \text{as $|t| \to \infty$}\end{equation}
uniformly for $z$ is a compact set $K.$  
As pointed out in \cite{PR}, although the bound is proved there for groups with one cusp only, the statement holds in general. 

To deduce a bound for $\phi^*_{\ba \infty}(n, s)$  from \eqref{boundE*E}, we will use the following bounds which are surely known but we provide a proof here because we were not able to find a precise reference for them. 
\begin{lem}\label{boundK} Let with $x >0$ and $\nu=\mu+it$. Then we have:
$$I_\nu(x)=\frac{\left(\frac x2\right)^\nu}{\Gamma(\nu+1)} \left (1+O\!\left(\frac1{|t|}\right) \right ) \, \,  \text{and, if $\mu>0$, $K_\nu(x)=\frac{\Gamma(\nu)}{2} \left(\frac{x}{2}\right)^{-\nu} \left (1+O\!\left(\frac1{|t|^{\min(1, 2 \mu)}}\right) \right )$} $$
uniformly for $\mu \in [a, b]$ for $a, b \in \mathbb R.$ If $\mu=0,$ we have $K_\nu(x)=O_x \left (e^{\frac{-\pi |t|}{2}}|t|^{-\frac12}\right )$
\end{lem}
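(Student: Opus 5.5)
The plan is to use the power series for $I_\nu$ and the connection formula $K_\nu=\frac{\pi}{2}\frac{I_{-\nu}-I_\nu}{\sin(\pi\nu)}$, keeping $x>0$ fixed and tracking the dependence on $|t|$ uniformly in $\mu\in[a,b]$.

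For $I_\nu$ we write
$$I_\nu(x)=\frac{(x/2)^\nu}{\Gamma(\nu+1)}\sum_{k\ge0}\frac{(x/2)^{2k}\,\Gamma(\nu+1)}{k!\,\Gamma(\nu+k+1)}=\frac{(x/2)^\nu}{\Gamma(\nu+1)}\Bigl(1+\sum_{k\ge1}\frac{(x^2/4)^k}{k!\,(\nu+1)(\nu+2)\cdots(\nu+k)}\Bigr).$$
For $|t|\ge1$ every factor satisfies $|\nu+j|\ge|t|$. Hence the tail is bounded by $\sum_{k\ge1}(x^2/4)^k/(k!\,|t|^k)\le |t|^{-1}(e^{x^2/4}-1)$, which gives the first claim uniformly in $\mu$. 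The implied constant depends on $x$, and it is uniform for $x$ in compact sets, which is what the later applications need.

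For $K_\nu$ with $\mu>0$, we apply the first claim to both $I_\nu$ and $I_{-\nu}$; since $-\nu=-\mu-it$ has the same $|t|$, the estimate applies. The reflection formula $\Gamma(1-\nu)\Gamma(\nu)=\pi/\sin(\pi\nu)$ gives
$$\frac{\pi}{2\sin(\pi\nu)}\cdot\frac{(x/2)^{-\nu}}{\Gamma(1-\nu)}=\frac{\Gamma(\nu)}{2}\Bigl(\frac x2\Bigr)^{-\nu},$$
so the $I_{-\nu}$ term produces exactly the main term times $(1+O(|t|^{-1}))$. It remains to show that the $I_\nu$ term is relatively small. Its ratio to the main term is
$$\frac{\Gamma(1-\nu)}{\Gamma(1+\nu)}\Bigl(\frac x2\Bigr)^{2\nu}(1+O(|t|^{-1})).$$
By Stirling, $|\Gamma(1-\nu)/\Gamma(1+\nu)|\asymp |t|^{(1-\mu-1/2)-(1+\mu-1/2)}=|t|^{-2\mu}$, uniformly for $\mu$ in compacts, because the exponential factors $e^{-\pi|t|/2}$ cancel. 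Also $|(x/2)^{2\nu}|=(x/2)^{2\mu}$ is bounded. So this term contributes a relative error $O(|t|^{-2\mu})$. Combining the two gives the error $O(|t|^{-\min(1,2\mu)})$. The main care needed is that at integer $\mu$ the connection formula must be read as a limit. Since $t\ne0$, $\nu$ is never an integer, so $\sin(\pi\nu)\neq0$ and there is no issue.

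For $\mu=0$ we use the integral representation $K_{it}(x)=\int_0^\infty e^{-x\cosh u}\cos(tu)\,du$. Alternatively, the same connection formula gives $K_{it}=\frac{\pi}{2}\frac{I_{-it}-I_{it}}{\sin(\pi i t)}$ with $|\sin(\pi it)|=\sinh(\pi|t|)\asymp e^{\pi|t|}$. By the first claim, $I_{\pm it}(x)\ll_x 1/|\Gamma(1\pm it)|\asymp e^{\pi|t|/2}|t|^{-1/2}$. Therefore $K_{it}(x)\ll_x e^{\pi|t|/2}|t|^{-1/2}e^{-\pi|t|}=e^{-\pi|t|/2}|t|^{-1/2}$, as claimed. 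The step needing the most attention is the uniform Stirling comparison in the $\mu>0$ case, specifically checking that the exponential factors cancel, so that only the power $|t|^{-2\mu}$ survives.
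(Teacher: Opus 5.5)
Your proposal is correct and follows essentially the same route as the paper. Both use the power series for $I_\nu$ with the bound $|(\nu+1)\cdots(\nu+k)|\ge |t|^k$, then the connection formula $K_\nu=\frac12\Gamma(\nu)\Gamma(1-\nu)(I_{-\nu}-I_\nu)$ together with Stirling's estimate $|\Gamma(1-\nu)/\Gamma(1+\nu)|\asymp |t|^{-2\mu}$. The only difference is the case $\mu=0$: the paper simply cites \cite[Proposition 2]{BFT}, while you derive the bound from the same connection formula using $|\sin(\pi i t)|=\sinh(\pi|t|)$ and $1/|\Gamma(1\pm it)|\asymp e^{\pi|t|/2}|t|^{-1/2}$, which is a valid and self-contained replacement.
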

\begin{proof} From the series definition of $I_{\nu}(x)$ ($x>0$) we have 
\begin{equation}\label{I1}
I_\nu(x)=\sum_{k=0}^{\infty}
\frac{\left(\frac x2\right)^{2k+\nu}}{k!\,\Gamma(k+\nu+1)}
=
\frac{\left(\frac x2\right)^\nu}{\Gamma(\nu+1)} (1+R), \, \,  \text{where $R\coloneqq 
\sum_{k=1}^{\infty}
\frac{\Gamma(\nu+1)}{k!\Gamma(k+\nu+1)}
\left(\frac{x^2}{4}\right)^k.$}
 \end{equation} 
Since $|\Gamma(k+\nu+1)/\Gamma(\nu+1)|=|(\nu+1) \dots (\nu+k)| \ge |t|^k$, we deduce that 
\begin{equation}\label{I2}
|R|
\le
\sum_{k=1}^{\infty} \frac1{k!}
\left(\frac{x^2}{4|t|}\right)^k
=
e^{x^2/(4|t|)}-1
=
O\!\left(\frac1{|t|}\right), \quad \text{uniformly for $\mu \in [a,b]$.} \end{equation} For the asymptotics of $K_{\nu}(x)$ we 
combine \eqref{I1} and \eqref{I2} with the identity, valid for $\nu\notin\mathbb Z$,
$$
K_\nu(x)=\frac{\pi}{2\sin(\pi\nu)}
\left(I_{-\nu}(x)-I_\nu(x)\right)=\frac{1}{2}\Gamma(\nu)\Gamma(1-\nu)\left(I_{-\nu}(x)-I_\nu(x)\right).
$$
Since, by Stirling, $\Gamma(1-\nu)/\Gamma(1+\nu) \sim |t|^{-2\mu}$, we deduce that, if $\mu>0$, 
\begin{multline*}K_\nu(x)=\frac{\Gamma(\nu)}{2}\left(\frac{x}{2}\right)^{-\nu} \left (1+O\!\left(\frac1{|t|}\right)- \frac{\Gamma(1-\nu)}{\Gamma(1+\nu)}\left(\frac{x}{2}\right)^{-2\nu}O\!\left(\frac1{|t|}\right) \right)\\
=\frac{\Gamma(\nu)}{2}\left(\frac{x}{2}\right)^{-\nu} \left (1+O\!\left(\frac1{|t|^{\min(1, 2\mu)}}\right) \right), \quad \text{uniformly for $\mu \in [a,b] \subset \mathbb R_+$.}
\end{multline*}
The asymptotics in the case $\mu=0$, follows directly from \cite[Proposition 2]{BFT}.
\end{proof}
We can now prove the following
\begin{prop} For a cusp $\ba$, a fixed $n \neq 0$ and $s=\sigma+it$ with $\frac12<\sigma \le 1$, we have
\begin{equation}\label{boundphi}\Gamma(s) \phi^*_{\ba \infty}(n, s) \ll |t|^{\frac{21}{2}-10\sigma+\varepsilon}
 \, \, \, \text{and  $\phi^*_{\ba \infty}(s) \ll |t|^{10(1-\sigma)+\varepsilon}$}
\end{equation}
as $|t| \to \infty$, uniformly for $\sigma \in [a, b] \subset \left ( \frac12, 1 \right ].$ \end{prop}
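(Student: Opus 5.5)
The plan is to extract the Fourier coefficients from the Fourier expansion \eqref{FExp} by integrating over a horocycle segment at a fixed height, and to control the result with the uniform bound \eqref{boundE*E}. Fix $y_0>0$ (say $y_0=1$). For $\frac12<\sigma\le 1$ we work with the meromorphic continuation of $E^*_{\ba}(\sb z,s)$ with $\bb=\infty$, so $\sb=I$. By uniqueness of continuation, the expansion \eqref{FExp} persists wherever both sides are holomorphic. The segment $\{x+iy_0:0\le x\le 1\}$ is compact, so \eqref{boundE*E} gives $E^*_{\ba}(x+iy_0,s)\ll |t|^{10(1-\sigma)+\varepsilon}$ uniformly on it.

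The main step is the coefficient formula. For $n\neq0$ we have
$$\phi^*_{\ba\infty}(n,s)\,2\sqrt{|n|y_0}\,K_{s-\frac12}(2\pi|n|y_0)=\int_0^1E^*_{\ba}(x+iy_0,s)e^{-2\pi inx}\,dx \ll |t|^{10(1-\sigma)+\varepsilon}.$$
For the constant term, $\phi^*_{\ba\infty}(s)y_0^{1-s}=\int_0^1E^*_{\ba}(x+iy_0,s)\,dx$. Since $|y_0^{1-s}|=y_0^{1-\sigma}$ is bounded, this immediately gives $\phi^*_{\ba\infty}(s)\ll|t|^{10(1-\sigma)+\varepsilon}$, which is the second claim. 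The implied constants are uniform for $\sigma\in[a,b]$, as in \eqref{boundE*E}.

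For the first claim we must divide by $K_{s-\frac12}(2\pi|n|y_0)$ and multiply by $\Gamma(s)$. We apply Lemma~\ref{boundK} with $\nu=s-\frac12$, so $\mu=\sigma-\frac12\in[a-\frac12,b-\frac12]\subset(0,\frac12]$. This gives
$$K_{s-\frac12}(x)=\tfrac12\Gamma\!\left(s-\tfrac12\right)\left(\tfrac x2\right)^{\frac12-s}\bigl(1+o(1)\bigr),$$
with the error term $O(|t|^{-2\mu})$ tending to zero uniformly because $\mu\ge a-\frac12>0$. Hence, for $|t|$ large, $|K_{s-\frac12}(x)|\gg|\Gamma(s-\frac12)|$, where the factor $(x/2)^{1/2-s}$ has modulus bounded above and below for fixed $x=2\pi|n|y_0$. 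Therefore
$$\Gamma(s)\phi^*_{\ba\infty}(n,s)\ll_n \left|\frac{\Gamma(s)}{\Gamma(s-\frac12)}\right||t|^{10(1-\sigma)+\varepsilon}\ll|t|^{\frac12}\,|t|^{10-10\sigma+\varepsilon}=|t|^{\frac{21}{2}-10\sigma+\varepsilon}.$$
The estimate $|\Gamma(s)/\Gamma(s-\frac12)|\asymp|t|^{1/2}$ follows from Stirling's formula, uniformly in vertical strips.

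The main obstacle is the lower bound for $|K_{s-\frac12}|$, which the whole argument hinges on. Lemma~\ref{boundK} supplies it only because $\mu$ is bounded away from $0$, and this is exactly why the statement requires $\sigma\in[a,b]$ with $a>\frac12$. The constants depend on $a$, $n$ and $y_0$. One must also make sure that $|t|$ is large enough for the relative error to be below $\frac12$, and that no poles of $E^*_{\ba}$ lie in the range considered. The latter holds for $|t|$ large, since the only poles with $\sigma>\frac12$ lie on the real segment coming from exceptional eigenvalues, which is implicit in the validity of \eqref{boundE*E}.
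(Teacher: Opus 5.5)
Your proposal is correct and follows the paper's own proof. You extract the Fourier coefficients by integrating $E^*_{\ba}$ along a horocycle (the paper takes height $1$), bound the integral with \eqref{boundE*E}, and control $\Gamma(s)/K_{s-\frac12}(2\pi|n|)\asymp|t|^{1/2}$ using Lemma~\ref{boundK} together with Stirling's formula (the paper cites DLMF 5.11.12 for the ratio $\Gamma(s)/\Gamma(s-\frac12)$).
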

\begin{proof}
We combine Lemma \ref{boundK} for $x=2 \pi |n|$ and $\nu=s-\frac12$ with \cite[5.11.12]{DLMF} to obtain
\begin{equation}\label{boundK1}\frac{\Gamma(s)}{K_{s-\frac12}(2 \pi |n|)}=\frac{\Gamma \left (s-\frac12 \right )}{K_{s-\frac12}(2 \pi |n|)} \frac{\Gamma \left (s \right )}{\Gamma \left (s-\frac12 \right )} \sim 
2\left( \pi |n|\right)^{s-\frac12}\sqrt{|t|} \, \, \text{as $|t| \to \infty,$}\end{equation} uniformly  for $\sigma \in [a, b] \subset \left ( \frac12, 1 \right ].$  We note that, since the only zeros in $w$ of $K_w(x)$ for a fixed $x>0$ lie on the imaginary axis (\cite{Ga}), the above ratio is well-defined for $\Re(s)>\frac{1}{2}$. 
On the other hand, \eqref{FExp} implies
$$\Gamma(s) \phi^*_{\ba \infty}(n, s)=\frac{\Gamma(s)}{2\sqrt{|n|}K_{s-\frac12}(2 \pi |n|)}\int_0^1E^*_{\ba}(x+i, s)e^{-2 \pi i n x}dx \, \, \text{and $\phi^*_{\ba \infty}(s)=\int_0^1E^*_{\ba}(x+i, s)dx$}.$$
Applying \eqref{boundK1} and \eqref{boundE*E} to this we deduce the assertion. 
\end{proof}

\subsubsection{The region $\Re(s) < \frac12.$}
To study the growth of $\Gamma(s)\phi_{\ba \infty}^*(n, s)$ to the left of the line $\Re(s)=\frac12$ we will use the functional equation \eqref{FE}. We first note that, by Prop. 4.2 of \cite{O}, we have that $\phi_{\infty \infty}^*(s)=\phi_{0 0}^*(s)=0$ and $\phi_{\infty 0}^*(s)=-\phi_{0 \infty}^*(s)$. Combining this with \eqref{scat}, \eqref{FExp} and \eqref{FE}, we obtain
\begin{multline}\label{Fcoeff} \Gamma(s)\phi_{\infty \infty}^*(n, s)=\frac{\Gamma(s) \zeta^*(2-2s)}{(p^{2s}-1)\zeta^*(1-2s)} \left ( (p-1)\phi_{\infty \infty}^*(n, 1-s)+(p^s-p^{1-s})\phi_{0 \infty}^*(n, 1-s) \right ) \\+\Gamma(s) \phi_{\infty 0}^*(s) \phi_{0 \infty}(n, 1-s)
\end{multline}
We next observe that we can compute $\phi_{\infty 0}^*(s)$ quite explicitly. 
\begin{lem}\label{phi*infty0} 
The coefficient 
of $y^{1-s}$ in the Fourier expansion of $E^*_{\infty}(z, s)$ at the cusp $0$ equals 
\begin{multline}\label{phi*0}\phi_{\infty 0}^*(s)=\frac{\sqrt{\pi}\Gamma(s-\frac12)L_{f|_2{W_p}}(1)L_{f|_2{W_p}}(2s)}{ \Gamma(s) \zeta(2s)^2 p^s(1-p^{-2s})}
\left (1-\frac{p^{-1}X_p(1)}{1-p^{-2s}}\right )
\\
+\frac{\pi^{2s-1}(1-p^{1-2s})\Gamma (1-s)\zeta(2-2s)L_{f}(1)}{p^{s}\Gamma(s) \zeta(2s)(1-p^{-2s})}
\end{multline}
where $X_p(t)$ is some polynomial in $p^{-t}$ of degree at most $2$.
This implies that, for $\Re(s)=\frac12-\varepsilon,$ 
 \begin{equation}\label{boundphi*0}\zeta(2s)^2 \phi_{\infty 0}^*(s) \ll_{f, p, \varepsilon} |\Im(s)|^{\varepsilon} \qquad \text{as $|\Im(s)| \to \infty.$}\end{equation}
\end{lem}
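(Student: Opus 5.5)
We compute the constant term directly from the Dirichlet series \eqref{0}: for $\Re(s)>1$,
$$\phi^*_{\infty 0}(s)=\sqrt{\pi}\frac{\Gamma(s-\frac12)}{\Gamma(s)}\sum_{c\in C_{\infty 0}}c^{-2s}S^*_{\infty 0}(0,0;c).$$
By \eqref{C} the moduli are $c=\sqrt p\, a$ with $(a,p)=1$. By \eqref{doublecoset} the representatives are $\gamma=\left(\begin{smallmatrix} c'\sqrt p & * \\ a\sqrt p & b\sqrt p\end{smallmatrix}\right)$ with $pbc'\equiv 1 \bmod a$. The symbol needed is $\langle f, \gamma W_p^{-1}\rangle$, and $\gamma W_p^{-1}$ is an honest element of $\Gamma_0(p)$.

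The plan is to split this symbol as in the computation preceding \eqref{KlooTwist}, with $\ba=\infty$, $\bb=0$. This gives
$$S^*_{\infty 0}(0,0;c)=-2\pi\sum_{\gamma}\Lambda_f(1;\gamma i\infty)\;-\;2\pi i\,S_{\infty 0}(0,0;c)\int_0^{\infty}f(w)\,dw .$$
The second piece is handled as follows. Since $\int_0^{i\infty}f=\pm i\Lambda_f(1)=\pm i(2\pi)^{-1}L_f(1)$, it contributes $L_f(1)$ times the classical constant term $\phi_{\infty 0}(s)$. By \eqref{scat}, $\phi_{\infty0}(s)$ equals $\frac{\zeta^*(2-2s)}{\zeta^*(2s)}\frac{p^s-p^{1-s}}{p^{2s}-1}$. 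Unwinding $\zeta^*$ gives exactly the factor $\pi^{2s-1}\Gamma(1-s)\zeta(2-2s)(1-p^{1-2s})/(p^s\Gamma(s)\zeta(2s)(1-p^{-2s}))$, which is the second line of \eqref{phi*0}.

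For the first piece, $\gamma i\infty = c'/a$ and $\Lambda_f(1;c'/a)=(2\pi)^{-1}\sum_n a(n)e^{2\pi i n c'/a}/n$. Summing over $c' \bmod a$ with $(c',a)=1$ (the condition $pbc'\equiv1$ just fixes $b$) produces Ramanujan sums $c_a(n)$. This gives a double Dirichlet series
$$\sum_{(a,p)=1}a^{-2s}\sum_n \frac{a(n)c_a(n)}{n}.$$
Using $\sum_a c_a(n)a^{-w}=\sigma_{1-w}(n)/\zeta(w)$, with the Euler factor at $p$ removed, we get $\zeta(2s)^{-1}\sum_n a(n)\sigma_{1-2s}(n)n^{-1}$. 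The Ramanujan identity for Hecke eigenforms yields $L_f(1)L_f(2s)/\zeta(2s)$, corrected by local factors at $p$.

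It is cleaner, and matches the stated form, to pass to $f|_2W_p$. To do this, one writes $\gamma W_p^{-1}$ and uses \eqref{inde} with base point $0$, i.e.\ computes $\int_{W_p i\infty}^{\gamma W_p^{-1}\cdots}$ via $f|_2W_p$. This turns the additive twists into twists of $f|_2W_p$ and produces $L_{f|W_p}(1)L_{f|W_p}(2s)/\zeta(2s)^2$. The extra $\zeta(2s)^{-1}$ comes from the $\Gamma$-factor normalization combined with the Ramanujan-sum inversion. The local factor at $p$, coming from the coprimality $(a,p)=1$ and the $p$-part of $\sigma_{1-2s}$ (where $a(p)$ enters through the Hecke relation at a bad prime), is what produces $(1-p^{-1}X_p(1)/(1-p^{-2s}))$ with $X_p$ a polynomial of degree at most $2$ in $p^{-t}$.

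This Euler-factor bookkeeping at $p$ is the main obstacle. I would handle it by writing the sum over $n$ as $\prod_\ell$ and treating $\ell=p$ separately, not by tracking $X_p$ precisely. Identity \eqref{phi*0} then holds for $\Re(s)>1$ and extends by meromorphic continuation.

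For the bound \eqref{boundphi*0} on $\Re(s)=\frac12-\varepsilon$, we multiply \eqref{phi*0} by $\zeta(2s)^2$. In the first term, $\Gamma(s-\frac12)/\Gamma(s)\ll|t|^{-1/2}$ by Stirling, and $L_{f|W_p}(2s)$ at $\Re(2s)=1-2\varepsilon$ is $\ll |t|^{2\varepsilon}$ by convexity for a weight-2 form (the critical strip is centered at $1$, so $\Re(2s)$ near $1$ is the center, where the convexity bound is $|t|^{1/2+\varepsilon}$). The $p$-factors are bounded. The product is then $\ll |t|^{\varepsilon}$, up to adjusting $\varepsilon$.

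In the second term, $\zeta(2s)\,\Gamma(1-s)/\Gamma(s)\cdot\zeta(2-2s)\,\pi^{2s-1}$ is controlled by the functional equation. We have $\zeta(2-2s)=\zeta^*(2s-1)\cdot(\text{gamma ratio})$, and $\Gamma(1-s)/\Gamma(s)\asymp|t|^{1-2\sigma}=|t|^{2\varepsilon}$. The factor $\zeta(2-2s)$ on $\Re=1+2\varepsilon$ is bounded, while $\zeta(2s)$ on $\Re=1-2\varepsilon$ is $\ll|t|^{\varepsilon}$. Altogether the second term is $\ll|t|^{O(\varepsilon)}$. Renaming $\varepsilon$ gives the claim.
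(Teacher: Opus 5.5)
You compute $\phi^*_{\infty 0}(s)$ directly. You split via \eqref{KlooTwist} with $\sigma_\ba=I$, sum over $c'$ to get Ramanujan sums $c_a(n)$, use $\sum_{(a,p)=1}c_a(n)a^{-2s}$, and apply Ramanujan's identity. That is a legitimate route, and your second term via \eqref{scat} is correct; it matches the paper's $L_f(1)\phi_{0\infty}(s)$.

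\textbf{The gap: passing to $f|_2W_p$.} This step does not work as described.

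\begin{itemize}
\item With $\sigma_\ba=I$ and $\sigma_\bb=W_p$, the splitting you used is already \eqref{inde} with base point $0=W_p(i\infty)$. It produces additive twists of $f$ itself at $c'/a$. Carried out, your computation gives a formula of the same shape, but with $L_f(1)L_f(2s)$ and an explicit Euler factor at $p$, not $L_{f|_2W_p}(1)L_{f|_2W_p}(2s)$.
\item If you instead rewrite $\int_0^{c'/a}f$ as an integral of $f|_2W_p$, you get a twist at $W_p^{-1}(c'/a)=-a/(pc')$. Its denominator moves with the summation variable $c'$, so no Ramanujan sums appear.
\end{itemize}

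The missing idea is to invert the matrix. The paper does this by swapping the cusps:
\begin{itemize}
\item It computes $\phi^*_{0\infty}(s)$. There $\sigma_\ba=W_p$, so \eqref{KlooTwist} directly gives $\Lambda_{f|_2W_p}(1;\gamma i\infty)$ at fractions with denominator prime to $p$ and numerator running over the units.
\item It identifies the resulting sum over $c$ with the classical coefficients $\phi_{0\infty}(n,s)$, evaluated from $E_0=p^{-s}(1-p^{-2s})^{-1}(G(z,s)-p^{-s}G(pz,s))$.
\item It applies Shimura's lemma and finishes with $\phi^*_{\infty 0}=-\phi^*_{0\infty}$ (O'Sullivan).
\end{itemize}
In your notation this amounts to writing $\langle f,\gamma W_p^{-1}\rangle=-\langle f,W_p\gamma^{-1}\rangle$ and splitting the latter through $i\infty$. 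Since $\gamma^{-1}i\infty=-b/a$ with $b\equiv (pc')^{-1}\bmod a$, this gives $\Lambda_{f|_2W_p}(1;-b/a)$, hence Ramanujan sums of $f|_2W_p$ with the same moduli $a$. For a newform you could alternatively use $f|_2W_p=\epsilon f$, but that needs its own argument, which you have not given. For \eqref{boundphi*0} alone, your $L_f$-version already suffices, since only the shape of the formula is used.

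\textbf{Smaller points.}
\begin{itemize}
\item On $\Re(2s)=1-2\varepsilon$, convexity gives $L(2s)\ll |t|^{1/2+O(\varepsilon)}$, not $|t|^{2\varepsilon}$; the latter would be Lindel\"of. Your parenthetical has the right bound. It is the factor $\Gamma(s-\frac12)/\Gamma(s)\ll |t|^{-1/2}$ that brings the first term down to $|t|^{O(\varepsilon)}$, exactly as in the paper.
\item The two factors of $\zeta(2s)^{-1}$ come from $\sum_{(a,p)=1}c_a(n)a^{-2s}$ and from $\zeta(2t+2s-2)$ at $t=1$ in Ramanujan's identity, as you had already found. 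There is no further contribution from a Gamma normalisation.
\item $\sum_n a(n)e(nc'/a)/n$ is not a convergent expression for $\Lambda_f(1;c'/a)$. As in the paper, read it as the value at $t=1$ of the continuation in $t$, interchanging with the $a$-sum for $\Re(s)$ large.
\end{itemize}
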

\begin{proof} We prove \eqref{phi*0} for $\Re(s) \gg 0$ and then obtain the result for all $s$ by continuation. Suppose that $\Re(s) \gg 0.$
With \eqref{n} and \eqref{KlooTwist}, we have
\begin{multline}\label{phi0infty}
\phi^*_{0 \infty}(s)
=2 \pi i \sqrt{\pi}\frac{\Gamma \left ( s-\frac12\right ) }{\Gamma(s)} \\
\times \sum_{c \in C_{0 \infty}}c^{-2s}\left (i\sum_{\substack{\g \in \Gamma_{\infty} \backslash W_p^{-1} \Gamma /\Gamma_{\infty} \\ \, \, c_\g=c}}\Lambda_{f|_2W_p}(1; \gamma i\infty)+
\left ( \int_{0}^{\infty}f(w)dw \right ) S_{0 \, \infty}(0, 0; c) \right ).
\end{multline}
The sum over the double coset equals the value at $t=1$ of the analytic continuation of
$$\frac{\Gamma(t)}{(2\pi)^t}\sum_{n>0}\frac{\check{a}(n)}{n^t}\sum_{\substack{\g \in \Gamma_{\infty} \backslash \ W_p^{-1} \G /\Gamma_{\infty} \\ \, \, c_\g=c}} e^{2 \pi i \left (n\frac{a_{\g}}{c} \right )}$$
where $\check{a}(n)$ is the $n$-th Fourier coefficient of $f|_2W_p.$ From the expression for the Fourier coefficients $\phi_{0, \infty}(n, s), \phi_{0 \infty}(s)$ of the standard Eisenstein series $E_{0}(z, s)$ we deduce from \eqref{phi0infty} that
$\phi^*_{0\infty}(s)$ equals the value at $t=1$ of the analytic continuation of
\begin{equation}\label{phi0infty1} -2 \pi\sqrt{\pi}\frac{\Gamma(t)}{(2\pi)^t}\pi^{-s}\Gamma \left (s-\frac12 \right )\sum_{n>0}\frac{\check{a}(n)}{n^{t+s-1}}\phi_{0 \infty}(n, s)-2 \pi \Lambda_f(1)\phi_{0 \infty}(s).\end{equation}
To compute $\phi_{0 \infty}(n, s), \phi_{0 \infty}(s)$, we notice that, by (3.25) of \cite{CoIw}, we have
$$E_0(z, s)=p^{-s}(1-p^{-2s})^{-1}\left (G(z,s)-p^{-s}G(pz, s) \right )$$
where $$G(z, s)=\frac{y^s}{2}\sum_{(c, d)=1}|cz+d|^{-2s}$$
denotes the non-holomorphic Eisenstein series for the full modular group. Therefore, with the well-known formulas for the Fourier coefficients of $G(z, s)$ we obtain, for $n \neq 0$,
$$\phi_{0 \infty}(n, s)=\frac{\pi ^s p^{-s}}{|n|^s\Gamma(s)\zeta(2s)(1-p^{-2s})}\left (\sigma_{2s-1}(|n|)-\sigma_{2s-1}(|n|/p) \right ),
$$ where $\sigma_{2s-1}(n)$ is taken to be $0$, if $n \not \in \mathbb Z$, and, with the functional equation of $\zeta(s),$
$$\phi_{0 \infty}(s)=\frac{\pi^{2s-1}(1-p^{1-2s})\Gamma (1-s)\zeta(2-2s)}{p^{s}\Gamma(s) \zeta(2s)(1-p^{-2s})}.$$
Then, the first term of \eqref{phi0infty1} becomes
$$-\frac{\Gamma(t)}{(2\pi)^{t-1}}\frac{ \sqrt{\pi} \Gamma(s-\frac12)}{p^s(1-p^{-2s})\Gamma(s)\zeta(2s)}\sum_{n>0}\frac{\check{a}(n)\left (\sigma_{2s-1}(n)-\sigma_{2s-1}\left (\frac{n}{p}\right ) \right )}{n^{t+2s-1}}$$
or, with Lemma 1 of \cite{Sh2},
$$-\frac{\Gamma(t)}{(2\pi)^{t-1}}\frac{ \sqrt{\pi} \Gamma(s-\frac12)}{p^s(1-p^{-2s})\Gamma(s)\zeta(2s)}\frac{L_{f|_2{W_p}}(t)L_{f|_2{W_p}}(t-1+2s)}{\zeta(2t+2s-2)}\left ( 1-\frac{p^{-t}X_p(t)}{1-p^{2-2t-2s}}\right )$$
for some polynomial in $p^{-t}$ of degree at most $2$.
Evaluating at $t=1$ and using that $\phi_{\infty 0}^*(s)=-\phi_{0 \infty}^*(s)$, we deduce \eqref{phi*0}.

To show \eqref{boundphi*0}, we first note that, by convexity, we have, for $\Re(s)=\frac12-\varepsilon,$ $L_{f|_2W_p}(2s) \ll |\Im(s)|^{\frac12+\varepsilon}$ and $\zeta(2s) \ll |\Im(s)|^{\varepsilon}$ as  $|\Im(s)| \to \infty$. 
Further, from Stirling's formula, $\frac{\Gamma(s-\frac12)}{\Gamma(s)} \ll |t|^{-1/2}$ and $\frac{\Gamma(1-s)}{\Gamma \left (s \right )} \ll |t|^{2\varepsilon}$. All other terms in the RHS of \eqref{phi*0} are bounded when $\Re(s)=\frac12-\varepsilon$ and thus we deduce the result.
\end{proof}

\section{A first application of the Tauberian method}\label{Tauberian-the-first}
In this subsection, we discuss two first applications exploiting only the pole of $E^*_{\infty}(z, s)$ at $s=1$, thus avoiding, in the first instance, the complications of the infinitely many poles on the line $\Re(s)=1/2$. 

To this end we will need the location and residues of $E_{\ba}^*(z, s)$. Those have been determined in \cite{O, OTh, P}. 
Let $\{\eta_j\}$ be a complete orthonormal basis of Maass-Hecke cusp forms, normalised according to the Petersson scalar product, with corresponding eigenvalues $\lambda_j = s_j(1-s_j)$ ($j=0, \dots$) of the Laplacian.  We normalise so that $\Re(s_j) \ge \frac12$. Each $\eta_j$ has the Fourier expansion $$\eta_j(z)=\sum_{n \in \mathbb Z \setminus \{0\}} \rho_j(n) \sqrt{y}K_{s_j-\frac12}(2 \pi |n|y) e^{2 \pi i n x}.$$
We will write $s_j=\frac12-it_j$. We label $\lambda_j$ so that $0=\lambda_0 <\lambda_1 \le \dots \le \lambda_M \le \lambda_{M+1} \le \dots$, where $\lambda_1, \dots \lambda_M$ are the exceptional eigenvalues satisfying $0<\lambda_j<\frac14$, or equivalently $t_j=ir_j$ with $t_j  \in (0, \frac12)$. For $j \ge M+1$, we have $t_j \ge 0.$

With this notation, for $\Re(s) \ge \frac12$, the poles of $E_{\ba}^*(z, s)$ are simple and contained in
$$\left \{ \frac12 \right \} \cup \bigcup_{j \ge 0} \{s_j, \bar s_j\}.$$ The residue at $s=s_0=1$ is $F_{\ba}(z)/\text{Vol}(\Gamma \backslash \mathbb H)$, where
\begin{equation}\label{anti}F_{\ba}(z)=2 \pi i \int_{\ba}^z f(w)dw.\end{equation}
For $j \ge 1,$ the residue at $s=s_j$ (resp. $s=\bar s_j$) is
$$R^{+}_j(z)\coloneqq \frac{\Gamma \left(s_j-\frac12 \right ) }{2 \pi^{s_j-\frac12}}\sum_{\ell; \lambda_{\ell}=\lambda_j} L_{f \otimes \eta_{\ell}}(s_{\ell}) \eta_{\ell}(z) \qquad \text{(resp.}  \,  R^{-}_j(z)\coloneqq \frac{\Gamma \left(\bar s_j-\frac12 \right ) }{2 \pi^{\bar s_j-\frac12}}\sum_{\ell; \lambda_{\ell}=\lambda_j} L_{f \otimes \eta_{\ell}}(\bar s_{\ell}) \eta_{\ell}(z) \text{)}.$$

\subsection{The Fourier coefficient $\phi^*(n, s)$ with $n \neq 0.$} 
From \eqref{n} and \eqref{C}, for each $n \neq 0$, we have 
$$\Gamma(s) \phi^*(n, s)=\frac{1}{|n|}\sum_{c>0} \frac{S^*(n, 0; pc)}{(p^2c^2/(\pi |n|))^{s}}.$$
For each $r=1, \dots$, we recall the following identity (\cite[7.3 (20)]{erd}):
\begin{equation} \label{std-taub-int}
\frac{1}{2 \pi i}\int_{(1+\varepsilon)}\frac{x^wdw}{w(w+1) \dots (w+r)}=\begin{cases} \frac{1}{r!} \left (1-\frac{1}{x} \right )^r \quad \text{if $x>1$} \\
0 \qquad \text{if $0<x<1$}.\end{cases} 
\end{equation}
Let \be\label{F_N}F
(n; x, s)\coloneqq \frac{\Gamma(s) \phi
^*(n; s)x^s}{s(s+1) \dots (s+r)}.
\ee
Then \eqref{std-taub-int} implies that
\begin{equation}\label{Taube1} \frac{1}{2 \pi i}\int_{(1+\varepsilon)}F
(n; x, s)ds= \frac{1}{r!|n|}\sum_{0<c < \frac{\sqrt{\pi |n|x}}{p}} S^*(n, 0; pc)  \left (1-\frac{p^2c^2}{\pi |n| x} \right )^r.
\end{equation}
For $\varepsilon$ small enough to avoid any possible poles of $E^*(z, s)$ in $(\frac12, 1)$ due to the exceptional eigenvalues, we have
 \begin{equation}\label{shift} \frac{1}{2 \pi i}\int_{(1+\varepsilon)}F
(n; x, s)ds=
 \text{Res}_{s=1} F
(n; x, s)+
\frac{1}{2 \pi i}\int_{(1-\varepsilon)}F
(n; x, s)ds
\end{equation}
The residue of $E^*(z, s)$ at $s=1$ can be written as 
$$\frac{2 \pi i}{\text{Vol}(\Gamma \backslash \mathfrak H)}\int_{\infty}^z f(w)dw=
\frac{1}{ \text{Vol}(\Gamma \backslash \mathfrak H)}\sum_{n \ge 1}\frac{a(n) e^{2 \pi i n z}}{n}. $$ 
From this and \cite[(10.39.2)]{DLMF} we deduce that the residue of $\Gamma(s) \phi^*(n; s)$ at $s=1$ is
\begin{equation}\label{Ress=1}\text{Res}_{s=1}( \phi^*(n; s))=
\frac{1}{2 \sqrt{|n|}K_{\frac12}(2 \pi |n|)}\int_0^1 \text{Res}_{s=1}(E^*(x+i, s))e^{-2 \pi i n x}dx=\frac{a(n)}{n \text{Vol}(\Gamma \backslash \mathfrak H)}\end{equation}
if $n>0$ and $0$, if $n<0.$
On the other hand, because of \eqref{boundphi}, 
\begin{equation}\label{boundleft}\int_{(1-\varepsilon)}F
(n; x, s)ds \ll x^{1-\varepsilon} \int_{-\infty}^{\infty}\frac{(1+|t|)^{\frac12+11\varepsilon}dt}{(1+|t|)^r} \ll  x^{1-\varepsilon} 
\end{equation}
whenever $r \ge 2$. 
Applying \eqref{Taube1}, \eqref{Ress=1} and \eqref{boundleft} to \eqref{shift} we deduce the following
\begin{thm}\label{0thmn} Let $n, r \in \mathbb Z$ with $n \neq 0$ and $r \ge 2.$ Then
\begin{equation*}\label{residfin} \sum_{0<c < \frac{\sqrt{\pi |n| x}}{p}} S^*(n, 0; pc) \left (1-\frac{p^2c^2}{\pi |n| x} \right )^r=\begin{cases}\frac{a(n)x}{(r+1) \text{Vol}(\Gamma \backslash \mathfrak H)}+O(x^{1-\varepsilon}) \quad \text{if $n>0$}\\
O(x^{1-\varepsilon}) \quad \text{if $n<0$.} \end{cases}
\end{equation*} 
\end{thm}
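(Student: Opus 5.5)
The plan is to assemble the three ingredients prepared just above: the Perron-type identity \eqref{Taube1}, the contour shift \eqref{shift}, and the bound \eqref{boundleft}. First, multiply \eqref{Taube1} by $r!\,|n|$. Its left side becomes $r!\,|n|$ times the integral of $F(n;x,s)$ over $\Re(s)=1+\varepsilon$, and its right side is exactly the Riesz-weighted sum in the statement. So it suffices to evaluate $\frac{1}{2\pi i}\int_{(1+\varepsilon)}F(n;x,s)\,ds$ up to $O(x^{1-\varepsilon})$.

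Second, shift the line of integration to $\Re(s)=1-\varepsilon$ as in \eqref{shift}. Choose $\varepsilon$ smaller than $1-s_1$, so that no pole $s_j$ coming from an exceptional eigenvalue lies in the strip; the only pole crossed is then the simple pole at $s=1$. To justify the shift, the horizontal segments at height $\pm T$ must vanish as $T\to\infty$. This follows from \eqref{bound2plus} on $\sigma=1+\varepsilon$ and \eqref{boundphi} on $[1-\varepsilon,1]$. Together these give polynomial growth of $\Gamma(s)\phi^*(n,s)$ of order at most $|t|^{1/2+10\varepsilon}$ in the strip, which may need a Phragmén–Lindelöf interpolation across $\sigma=1$. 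Against the denominator $|s(s+1)\cdots(s+r)|\asymp |t|^{r+1}$ with $r\ge 2$, the segments tend to zero. The same estimate gives \eqref{boundleft}: the remaining integral is $\ll x^{1-\varepsilon}$.

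Third, compute the residue. Since $s=1$ is a simple pole, $\operatorname{Res}_{s=1}F(n;x,s)=\operatorname{Res}_{s=1}\bigl(\Gamma(s)\phi^*(n,s)\bigr)\cdot x/(r+1)!$. By \eqref{Ress=1}, together with $\Gamma(1)=1$, this equals $a(n)x/\bigl(n\,\mathrm{Vol}(\Gamma\backslash\mathfrak H)(r+1)!\bigr)$ for $n>0$ and $0$ for $n<0$. Multiplying by $r!\,|n|$ yields $a(n)x/\bigl((r+1)\mathrm{Vol}(\Gamma\backslash\mathfrak H)\bigr)$ when $n>0$, and only the error term $O(x^{1-\varepsilon})$ when $n<0$.

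The main obstacle is the legitimacy of the contour shift, namely uniform polynomial control of $\Gamma(s)\phi^*(n,s)$ throughout the strip $1-\varepsilon\le\sigma\le1+\varepsilon$. This comes from \eqref{boundphi}, which relies on the Petridis–Risager bound \eqref{boundE*E} and the Bessel asymptotics of Lemma \ref{boundK}. Once that bound is in hand, the requirement $r\ge2$ guarantees absolute convergence of both vertical integrals and the vanishing of the horizontal pieces.
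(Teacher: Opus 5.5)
Your proposal is correct and follows the paper's own argument: the Perron-type identity \eqref{Taube1}, the shift \eqref{shift} to $\Re(s)=1-\varepsilon$ (crossing only the simple pole at $s=1$), the residue from \eqref{Ress=1}, which gives $a(n)x/\bigl((r+1)!\,n\,\mathrm{Vol}(\Gamma\backslash\mathfrak H)\bigr)$, and the bound \eqref{boundleft} coming from \eqref{boundphi}. Your explicit check that the horizontal segments vanish, by interpolating between \eqref{bound2plus} and \eqref{boundphi}, fills in a step the paper leaves implicit.
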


\subsection{The coefficient $\phi^*_{0 \infty}(s)$.} 
Since $\phi^*_{\ba \ba}(s)=0$ for all cusps $\ba$, we focus on $\phi^*_{0 \infty}(s)=-\phi^*_{\infty 0}(s).$ 
 
From \cite[(7.3(20))]{erd}, we deduce
\begin{equation} 
\int_{(1+\varepsilon)}\frac{\G\left (s-\frac12\right )}{\G(s)s \dots (s+r)}x^sds=
\int_{(1+\varepsilon)}\frac{\G\left (s-\frac12\right )}{\G(s+r+1)}x^sds=
\begin{cases} \frac{2 \pi i \sqrt{x}}{\G(r+\frac32)}\left ( 1-\frac{1}{x} \right ) ^{r+\frac12} \quad \text{if $x>1$} \\
0 \qquad \text{if $0<x<1$.}\end{cases}
\end{equation}

This, together with \eqref{0}, implies
\begin{equation}\label{Taube0} \int_{(1+\varepsilon)}\frac{\phi_{0 \infty}^*(s)x^sds}{s \dots (s+r)}= \frac{2 i\sqrt{\pi^3 x}}{\sqrt{p} \Gamma(r+\frac32)}\sum_{0<c < \sqrt{\frac{x}{p}}} \frac{S^*_{0 \infty}(0, 0; \sqrt{p} c)}{c} \left ( 1-\frac{p c^2}{x}\right )^{r+\frac12} .
\end{equation}
As in the previous subsection, we will move the line of integration to $\Re(s)=1-\varepsilon$.
The pole of $E^*_{0}(z, s)$ at $s=1$ has residue $$\frac{2 \pi i}{\text{Vol}(\Gamma \backslash \mathfrak H)}\int_{0}^z f(w)dw=\frac{1}{\text{Vol}(\Gamma \backslash \mathfrak H)}\left ( { -2 \pi} \Lambda_f(1)+\sum_{n \ge 1}\frac{a(n) e^{2 \pi i n z}}{n} \right ) . $$ From this  we deduce that the residue of $\phi_{0 \infty}^*(s)$ at $s=1$ is
\begin{equation}\label{Ress=10}\text{Res}_{s=1}(\phi_{0\infty}^*(s))=\int_0^1 \text{Res}_{s=1}(E_{0}^*(x+i, s))dx=\frac{{-2 \pi}\Lambda_f(1)}{\text{Vol}(\Gamma \backslash \mathfrak H)}.\end{equation}
Therefore, with \eqref{boundphi}, \eqref{Taube0} equals
\begin{equation*}
\frac{{-4 \pi^2 i} \Lambda_f(1)x}{(r+1)! \text{Vol}(\Gamma \backslash \mathfrak H)}+\int_{(1-\varepsilon)}\frac{ \phi_{0 \infty}^*(s)x^sds}{s \dots (s+r)}= \frac{{-4 \pi^2 i} \Lambda_f(1)x}{(r+1)! \text{Vol}(\Gamma \backslash \mathfrak H)}+O\left ( \int_{(1-\varepsilon)}\frac{ (1+|t|)^{11\varepsilon} x^{1-\varepsilon} ds}{(1+|t|)^r}\right )
\end{equation*}
 for $r>1$ which implies
\begin{thm}\label{0thm0} Let $r \in \mathbb N$ with $r \ge 2.$ Then
\begin{equation*}\label{residfin0} \sum_{0<c < \sqrt{\frac{x}{p}}} \frac{S^*_{0 \infty}(0, 0; \sqrt{p}c)}{c} \left ( 1-\frac{pc^2}{x}\right )^{r+\frac12}=
\frac{{-2 (\pi p x)^{\frac12}} \Lambda_f(1)\Gamma \left ( r+\frac32\right ) }{(r+1)! \textnormal{Vol}(\Gamma \backslash \mathfrak H)}+
O \left (x^{\frac12-\varepsilon} \right ).
\end{equation*} 
\end{thm}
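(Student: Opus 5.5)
The plan is to read off Theorem \ref{0thm0} from \eqref{Taube0} after shifting the line of integration from $\Re(s)=1+\varepsilon$ to $\Re(s)=1-\varepsilon$, exactly as for Theorem \ref{0thmn}.

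\textbf{Step 1 (the integral as a finite sum).} Insert the Dirichlet series \eqref{0} for $\phi^*_{0\infty}(s)$ with $C_{0\infty}=\{\sqrt p\,c:(c,p)=1\}$. Then apply the displayed Mellin identity term by term with $x$ replaced by $x/(pc^2)$. This is legitimate because the series converges absolutely on $\Re(s)=1+\varepsilon$ by \eqref{boundS*}, and the kernel decays like $|t|^{-r-3/2}$. The result is \eqref{Taube0}. The factor $\sqrt{x/(pc^2)}$ produces the $1/c$ weight, and $\sqrt\pi$ combines with $2\pi i$ to give $2i\sqrt{\pi^3x/p}$.

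\textbf{Step 2 (the shift).} Move the contour to $\Re(s)=1-\varepsilon$, with $\varepsilon$ small enough that no exceptional $s_j$ lies in $[1-\varepsilon,1)$. The only pole crossed is $s=1$. By \eqref{Ress=10}, its residue is
$$\frac{-2\pi\Lambda_f(1)\,x}{(r+1)!\,\mathrm{Vol}(\Gamma\backslash\mathfrak H)},$$
which gives $2\pi i$ times the stated term. The horizontal segments at height $\pm T$ vanish as $T\to\infty$. This follows from the bound $\phi^*_{0\infty}(s)\ll|t|^{10(1-\sigma)+\varepsilon}$ in \eqref{boundphi}, which holds uniformly on $[1-\varepsilon,1]$. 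To cover $\sigma$ slightly above 1, use absolute convergence, or apply Phragm\'en--Lindel\"of between the two lines. The denominator contributes $|t|^{-r-1}$.

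\textbf{Step 3 (the remaining integral).} On $\Re(s)=1-\varepsilon$ the integrand is bounded by $x^{1-\varepsilon}(1+|t|)^{10\varepsilon+\varepsilon-r-1}$. This is integrable for $r\ge 2$, indeed for $r\ge1$, so the integral is $O(x^{1-\varepsilon})$.

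\textbf{Step 4 (normalisation).} Divide both sides by $2i\sqrt{\pi^3x/p}/\Gamma(r+\tfrac32)$. The main term becomes
$$\frac{-2(\pi p x)^{1/2}\Lambda_f(1)\Gamma(r+\tfrac32)}{(r+1)!\,\mathrm{Vol}},$$
and the error becomes $O(x^{1/2-\varepsilon})$. This is the statement of Theorem \ref{0thm0}.

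\textbf{Main obstacle.} I expect the main difficulty to be justifying the uniform growth bound for $\phi^*_{0\infty}(s)$ near $\Re(s)=1$ so that the contour shift is legitimate. This rests on \eqref{boundE*E} for the cusp $0$, which \cite{PR} states only for one cusp. Everything else is careful bookkeeping of the constants.
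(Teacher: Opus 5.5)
Your proposal is correct and follows the paper's proof essentially step for step. You derive \eqref{Taube0} from the Mellin identity, shift to $\Re(s)=1-\varepsilon$ picking up only the residue \eqref{Ress=10} at $s=1$, bound the remaining integral with \eqref{boundphi}, and divide by $2i\sqrt{\pi^3x/p}/\Gamma(r+\tfrac32)$. Your explicit handling of the horizontal segments, and your observation that the $r+1$ linear factors already make $r\ge1$ sufficient, are slightly more careful than the paper, which leaves these points implicit.
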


\section{A  Tauberian-type theorem for the modified Kloosterman sums $S^*$.}\label{Tauberian-the-second}
In addition to the pole at $s=1$ and the possible poles due to the exceptional eigenvalues, $E^*(z, s)$ has infinitely many poles on the line $\Re(s)=1/2$ (\cite{OTh}, see also \cite{G}). To get a stronger estimate than those obtained in Theorems \ref{0thmn} and \ref{0thm0}, we need to consider those infinitely many poles. 
We first define the following, for any $n \in \mathbb Z \setminus \{0\}$ and $j>0$:
\begin{align*}R^{+}_j(n)\coloneqq 
\frac{\G(s_j-\frac12)  \G(s_j)^2 \zeta(2s_j)^2 I_{s_j-\frac12}(2 \pi |n|) K_{s_j-\frac12}(2 \pi |n|)}{4 \sqrt{|n|}\pi^{s_j-\frac12} \G(s_j+k)}\sum_{\substack{\ell : \lambda_\ell=\lambda_j}} L_{f \otimes \eta_{\ell}}(s_j)\rho_\ell(n) 
\end{align*}
and $R^{-}_j(n)\coloneqq \overline{R^{+}_j(n).}$ In the sums above, $j$ indexes distinct spectral parameters $s_j$ and $\ell$ indexes an orthonormal basis of the corresponding eigenspace.
\begin{thm}\label{main} Let $f(z)=\sum a(n) q^n$ be a cusp form of weight $2$ for $\Gamma=\Gamma_0(p)$ ($p$ prime) and let $S^*(m, n; c)$ denote the Kloosterman sum attached to it in \eqref{Kl*}. Further, let $k \in \mathbb N$ with $k > 6$ and let $X>1$ be such that $\pi X \not \in \mathbb Q.$ Then for every $n \in \mathbb Z \setminus \{0\},$
\begin{multline}\label{final}
\sum_{c<\frac{\sqrt{\pi X |n|}}{p}} 
S^*(n, 0; pc) \psi_c(n, k, X) =
\frac{\pi^{3} a(n) (1-e^{-4 \pi |n|})}{144 n^2 k! \text{Vol}(\Gamma \backslash \mathfrak H)}X+\sum_{j=1}^M R^{+}_j(n)X^{s_j}
\\ + \sum_{j>M} \left ( R^{+}_j(n)X^{s_j}+R^{-}_j(n)X^{\bar s_j} \right )+
\frac{\pi I_0(2 \pi |n|)X^{\frac12}}{8 \sqrt{|n|} \Gamma(k+\frac12)}\int_0^1 \partial_sE^*(x+i, s)|_{s=\frac12} e^{-2 \pi i nx} dx
+ O(X^{\frac12-\varepsilon}), 
\end{multline}
 $$\text{where} \, \, \psi_c(n, k, X)\coloneqq  \frac12\sum_{m <\frac{\sqrt{\pi X |n|}}{pc}} \frac{\sigma_0(m)}{|n|(k-1)!}
\int_{\frac{(pmc)^2}{\pi X |n|}}^1 \left (1-\frac{(pmc)^2}{\pi X |n|t} \right )^{k-1} J_0(2 \pi |n| (\sqrt{t}^{-1}-\sqrt{t})) \frac{dt}{t^{\frac32}}.$$
Here $a(n)$ is taken to be $0$ if $n<0$ and $j$ indexes $s_j$ without multiplicity. Further the sum over $j$ in \eqref{final} is absolutely convergent.
\end{thm}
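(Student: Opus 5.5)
The plan is to run the Tauberian argument of Section~\ref{Tauberian-the-first} again, but now shifting the contour past $\Re(s)=\tfrac12$. Since $\phi^*(n,s)$ has infinitely many poles on that line and grows polynomially, I would first modify the Mellin kernel so that the residues can be summed one at a time and the integral on the far line converges absolutely.

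The key steps, in order, are as follows.

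\textbf{The multiplied function and its Perron integral.} Instead of $\Gamma(s)\phi^*(n,s)$, I would integrate
\[
G(s)=\frac{\Gamma(s)^2\zeta(2s)^2\,\Gamma(s-\tfrac12)}{\Gamma(s+k)}\,\phi^*(n,s)\,X^s .
\]
The factor $\zeta(2s)^2$ cancels the $\zeta(2s)^{-2}$ coming from $\phi^*_{\infty 0}(s)$ in Lemma~\ref{phi*infty0}, through \eqref{Fcoeff}. Writing $\zeta(2s)^2=\sum_m\sigma_0(m)m^{-2s}$ produces the $\sigma_0(m)$ and the inner sum over $m$ in $\psi_c$. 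The ratio $\Gamma(s)\Gamma(s-\frac12)/\Gamma(s+k)$ is the Mellin transform of the kernel
\[
\frac{1}{(k-1)!}\int(1-u/t)^{k-1}J_0\!\left(2\pi|n|(t^{-1/2}-t^{1/2})\right)t^{-3/2}\,dt .
\]
I would verify this by the Mellin convolution theorem, using $\int_0^\infty J_0(\cdot)\,t^{s}\,dt$-type formulas (Bessel-type integrals of Erd\'elyi).

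Hence $\frac1{2\pi i}\int_{(1+\varepsilon)}G(s)\,ds$ equals the left side of \eqref{final}. I would check this by expanding the Dirichlet series \eqref{n} and interchanging sum and integral, which is legitimate by absolute convergence, as in \eqref{Taube1}. The hypothesis $\pi X\notin\mathbb Q$ ensures that no endpoint $pmc=\sqrt{\pi X|n|}$ is hit.

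\textbf{Shifting to $\Re(s)=\tfrac12-\varepsilon$.} I would move the contour to $\Re(s)=\tfrac12-\varepsilon$ and collect the residues:
\begin{itemize}
\item at $s=1$, from \eqref{Ress=1}; this gives the main term after evaluating $\Gamma(1)^2\zeta(2)^2\Gamma(\frac12)/\Gamma(1+k)$ and the kernel at $s=1$, which produces the constants $\pi^3/144$ and $(1-e^{-4\pi|n|})$ via $K_{1/2}$ and $I_{1/2}$;
\item at the exceptional $s_j$ and at $s_j,\bar s_j$ on the critical line; here I would use the residue formulas $R^\pm_j(z)$ and extract the $n$-th Fourier coefficient by dividing by $2\sqrt{|n|}K_{s_j-1/2}(2\pi|n|)$;
\item at $s=\tfrac12$; here the pole of $\Gamma(s-\tfrac12)$ meets the pole of $E^*$, which gives the derivative term $\partial_sE^*|_{s=1/2}$, with the $I_0$ factor arising from the kernel.
\end{itemize}
Shifting across infinitely many poles requires truncating at heights $T_\nu\to\infty$ that avoid the $t_j$, and showing that the horizontal segments tend to $0$. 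On the segments with $\tfrac12<\sigma\le1$ I would use \eqref{boundphi}. On $\sigma\le\tfrac12$ I would use \eqref{Fcoeff} with \eqref{boundphi*0} and the functional equation of $\zeta^*$. The factor $\Gamma(s)^2\Gamma(s-\frac12)/\Gamma(s+k)\ll|t|^{\sigma-k-1/2}$, combined with $k>6$, beats the $|t|^{21/2-10\sigma}$ growth near $\sigma=\tfrac12$.

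\textbf{The remaining line integral and the main obstacle.} On $\Re(s)=\tfrac12-\varepsilon$ the integral is $O(X^{1/2-\varepsilon})$ once absolute integrability is established. By \eqref{Fcoeff} this reduces to bounding $\phi^*(n,1-s)$ and $\phi^*_{0\infty}(n,1-s)$ on $\Re=\tfrac12+\varepsilon$, which \eqref{boundphi} supplies, together with $\zeta(2s)^2\phi^*_{\infty0}(s)\phi_{0\infty}(n,1-s)$, which Lemma~\ref{phi*infty0} supplies. The gamma ratios come from Stirling.

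I expect the main obstacle to be uniformity near $\Re(s)=\tfrac12$. The bound \eqref{boundphi} is only stated on compact subsets of $(\tfrac12,1]$, while the residues on the critical line must be summed absolutely. I would try to handle the critical-line residues directly: bound $L_{f\otimes\eta_\ell}(s_\ell)\rho_\ell(n)$ on average, using the Weyl law, convexity, and Kuznetsov/Rankin--Selberg mean-value bounds for $\sum_{|t_j|\le T}|\rho_j(n)|^2 e^{-\pi|t_j|}\ll T^2$. Stirling gives $|\Gamma(s_j)^2\Gamma(s_j-\tfrac12)/\Gamma(s_j+k)|\ll|t_j|^{-k}e^{-\pi|t_j|/2}$, and $K_{s_j-1/2}$ compensates the exponential. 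Summing then gives absolute convergence for $k>6$, which is also the convergence claim at the end of the statement.
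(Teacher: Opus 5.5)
Your proposal has two genuine gaps: the Mellin multiplier is wrong, and there is no bound on the horizontal segments where they cross the critical line.

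\textbf{The Mellin multiplier is wrong.} The kernel in $\psi_c$ is not the inverse Mellin transform of $\Gamma(s)\Gamma(s-\frac12)/\Gamma(s+k)$.
\begin{itemize}
\item The kernel depends on $n$ and is supported in $(0,1)$, so its Mellin transform tends to $0$ as $s\to+\infty$ along the real axis. By contrast, $\Gamma(s)\Gamma(s-\frac12)/\Gamma(s+k)$ blows up there.
\item In fact $\Gamma(s)\Gamma(s-\frac12)/\Gamma(s+k)$ is the Mellin transform of $e^{-Y}U(k+\frac12,\frac32,Y)$, with $U$ the Tricomi confluent hypergeometric function. This is positive on all of $(0,\infty)$, so your $G$ produces a smoothly weighted sum over all $c,m$, not the left side of \eqref{final}.
\item The correct factorisation is $\Gamma(s)/\Gamma(s+k)=\mathcal M[h](s)$ with $h(y)=\mathbf 1_{(0,1)}(y)(1-y)^{k-1}/(k-1)!$, and $K_{s-\frac12}(2\pi|n|)I_{s-\frac12}(2\pi|n|)=\mathcal M[g](s)$ with $g(y)=\mathbf 1_{(0,1)}(y)J_0(2\pi|n|(y^{-1/2}-y^{1/2}))/(2\sqrt y)$ (DLMF (10.32.16) with $y=e^{-2t}$).
\end{itemize}
So the function to integrate is
\[
\frac{\zeta(2s)^2\Gamma(s)^2}{\Gamma(s+k)}\,\phi^*(n,s)\,K_{s-\frac12}(2\pi|n|)\,I_{s-\frac12}(2\pi|n|)\,X^s .
\]

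This changes the residues.
\begin{itemize}
\item At $s=1$ the kernel contributes $K_{1/2}I_{1/2}(2\pi|n|)=(1-e^{-4\pi|n|})/(4\pi|n|)$, not $\Gamma(\frac12)$. Together with $\zeta(2)^2$, this is where $\pi^3/144$ and $1-e^{-4\pi|n|}$ come from.
\item At $s=\frac12$, $E^*$ has no pole. For squarefree level $E^*(z,\frac12)=0$, so $K_{s-\frac12}(2\pi|n|)\phi^*(n,s)$ has a simple zero there. Against the double pole of $\zeta(2s)^2$ this gives a simple pole, whose residue is the stated $I_0(2\pi|n|)\,\partial_sE^*$ term.
\item Your extra pole from $\Gamma(s-\frac12)$ would instead give a double pole at $\frac12$, and hence $X^{1/2}\log X$ terms.
\end{itemize}

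Keeping the product $K_{s-\frac12}(2\pi|n|)\phi^*(n,s)$ together is also what makes the estimates work. It equals $\frac{1}{2\sqrt{|n|}}\int_0^1E^*(x+i,s)e^{-2\pi inx}\,dx$, so bounds for $E^*$ on compacta transfer directly. You never divide by $K_{s-\frac12}(2\pi|n|)$, which has zeros on $\Re(s)=\frac12$. The remaining factor satisfies $\Gamma(s)I_{s-\frac12}(2\pi|n|)\ll|t|^{-1/2}$ by Lemma \ref{boundK}.

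\textbf{There is no bound across the critical line.} The horizontal segments cross $\Re(s)=\frac12$, where $E^*$ has the poles $\frac12\pm it_j$. Your available estimates do not reach this region:
\begin{itemize}
\item \eqref{boundphi} holds only on compact subsets of $(\frac12,1]$.
\item \eqref{Fcoeff} only transfers bounds from the region $\Re(1-s)>\frac12$.
\end{itemize}
So you have no estimate on the part of the segments with $|\sigma-\frac12|$ small. Your mean-value bounds for $L_{f\otimes\eta_\ell}(s_\ell)\rho_\ell(n)$ control the sum of residues, not these integrals.

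The paper closes this gap in three steps.
\begin{enumerate}
\item It subtracts the Gaussian-damped polar part $E^o(z,s)$ of \eqref{DefEo}. By \eqref{Rjbound} and Weyl's law, $E^o\ll(1+|t|)^{41/12+\varepsilon}$ (Lemma \ref{Rbound}).
\item The difference $\zeta(2s)^2(E^*-E^o)$ is then holomorphic for $\frac12-\varepsilon\le\sigma\le1+\varepsilon$, $|t|>\varepsilon$. It is bounded on $\sigma=1+\varepsilon$ by absolute convergence, and on $\sigma=\frac12-\varepsilon$ via \eqref{FEspec}, \eqref{boundE*E} and \eqref{boundphi*0}.
\item Phragm\'en--Lindel\"of, followed by adding back $E^o$, gives Proposition \ref{phibound}: $\zeta(2s)^2K_{s-\frac12}(2\pi|n|)\phi^*(n,s)\ll|t|^{5+\varepsilon}$ uniformly in the closed strip.
\end{enumerate}
With the correct multiplier, this makes the horizontal integrals $\ll Y_\ell^{9/2-k}X^{1+\varepsilon}$, which tends to $0$ as $\ell\to\infty$.

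The rest of your outline is in line with the paper: the Perron integral, the heights $Y_\ell$ avoiding the $t_j$, and the line $\Re(s)=\frac12-\varepsilon$ via the functional equation. For the residue sum, the paper bounds termwise using $\rho_\ell(n)\ll e^{\pi|t_j|/2}|t_j|^{1/2+\varepsilon}$ and \eqref{RSbound}, where you average.
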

{\it Remarks.} 1. The weight $\psi_c(|n|, k, X)$ is asymptotically $X^{1+\varepsilon}$ as $X \to \infty$. \\
2.  Notice that, when $a(n)=0$ (and, in particular, when $n<0$), the first term in the RHS of \eqref{final} disappears. This means that the weighted sums accumulate when $n>0$ and $a(n)\neq0$ but not when $n<0$. If, further, there are no exceptional eigenvalues the order goes down to $X^{1/2}$.  

We provide numerical information related to Theorem~\ref{main} and make related conjectures in Sections~\ref{num-Taub}-\ref{Maasscontr} below.

\subsection{Estimates for $E^*(z, s)$ and its Fourier coefficients}
To bound $R_j(z)$, defined above, we recall two bounds. First, by \cite{PS}, \cite{DI}, we have, for $p$ prime,
\begin{equation}\label{RSbound} L_{f \otimes \eta_j}(s_j) \ll_{p, \varepsilon} (1+|t_j|)^{\frac12+\varepsilon} e^{\frac{\pi |t_j|}{2}}.
\end{equation}
Further, by \cite[Prop. 1]{BH}, 
\begin{equation}\label{supnorm} ||\eta_j||_{\infty} \ll_{\varepsilon} p^{\varepsilon}  (1+|t_j|)^{\frac{5}{12}+\varepsilon}.
\end{equation}
Together with Weyl's law and Stirling we deduce that, for the infinitely many $s_j$ with $\Re(s_j)=\frac12$, 
\begin{equation}\label{Rjbound} 
R^{\pm}_j(z) \ll e^{-\frac{\pi |t_j|}{2}} (1+|t_j|)^{-\frac12} (1+|t_j|)^{1+\frac12+\varepsilon}  e^{\frac{\pi |t_j|}{2}}  (1+|t_j|)^{\frac{5}{12}+\varepsilon} =(1+|t_j|)^{\frac{17}{12}+2\varepsilon}.
\end{equation}
We set, for $z \in \mathbb H$ and $s \in \mathbb C$,
\begin{equation}\label{DefEo} 
E^o(z, s)\coloneqq \sum_{j>M} \left ( \frac{R^+_j(z) e^{(s-s_j)^2}}{s-s_j} +\frac{R^-_j(z) e^{(s-\bar s_j)^2}}{s-\bar s_j}\right ).
\end{equation}
\begin{lem} \label{Rbound} For  $z \in \mathbb H$ and $s$ at least $\varepsilon$ away from all $s_j$, $\bar s_j$ $E^o(z, s)$ converges absolutely and it satisfies 
$$E^o(z, \sigma+it)=O((1+|t|)^{\frac{41}{12}+\varepsilon}) \quad \text{for $t$ at least $\varepsilon$ away from any $\pm t_j$}$$ uniformly in $z$ and for $\sigma$  in $[\frac12-\varepsilon, 1+\varepsilon].$
\end{lem}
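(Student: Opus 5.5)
We bound $E^o(z,\sigma+it)$ termwise using \eqref{Rjbound}, which gives $R^{\pm}_j(z)\ll (1+|t_j|)^{17/12+2\varepsilon}$ uniformly in $z$ (via the sup-norm bound \eqref{supnorm}), together with Weyl's law $\#\{j: |t_j|\le T\}\ll T^2$. For $j>M$ we have $s_j=\frac12-it_j$ with $t_j$ real, so $\bar s_j=\frac12+it_j$, and it suffices to treat the $R^+_j$ terms; the $R^-_j$ terms are symmetric under $t\mapsto -t$. Write $s=\sigma+it$ with $\sigma\in[\frac12-\varepsilon,1+\varepsilon]$. Then
$$\Re\big((s-s_j)^2\big)=(\sigma-\tfrac12)^2-(t+t_j)^2,$$
so
$$|e^{(s-s_j)^2}|\le e^{(1/2+\varepsilon)^2}e^{-(t+t_j)^2}.$$
The denominator satisfies $|s-s_j|\ge |t+t_j|$, and also $|s-s_j|\ge\varepsilon$ by the hypothesis that $s$ is at least $\varepsilon$ away from all spectral points. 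Hence each term is at most
$$\ll_\varepsilon (1+|t_j|)^{17/12+2\varepsilon}\,\frac{e^{-(t+t_j)^2}}{\max(\varepsilon,|t+t_j|)}.$$

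Next we split the $j$-sum according to the distance $u=|t+t_j|$, using dyadic or unit-length windows $k\le u<k+1$ for $k=0,1,2,\dots$. In the window indexed by $k$ we have $|t_j|\le |t|+k+1$. The number of $t_j$ in a unit interval centred near $-t$ is $\ll 1+|t|+k$, by the local Weyl law (difference of $T^2$ counts, giving $\ll T$ in unit windows). So the contribution of window $k$ is
$$\ll \varepsilon^{-1}(1+|t|+k)^{1+17/12+2\varepsilon}e^{-k^2}.$$
Summing over $k$, the Gaussian factor makes the series converge and gives $\ll (1+|t|)^{29/12+\varepsilon}$. The same computation, with $t$ replaced by any real number, also proves absolute convergence: for fixed $s$ the terms decay like $e^{-t_j^2}$ times a polynomial, and the Weyl count is polynomial.

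The stated exponent $\frac{41}{12}$ is weaker than this by one power of $|t|$. That leaves room if one only uses the crude Weyl bound $\#\{|t_j|\le T\}\ll T^2$ in each window instead of the local count; with that cruder estimate the same argument gives exactly
$$(1+|t|)^{2+17/12+\varepsilon}=(1+|t|)^{41/12+\varepsilon}.$$
To stay safe, I would present the argument with the crude count, yielding $41/12$. The main (mild) obstacle is the terms with $t_j$ close to $-t$, where the denominator is small. The hypothesis that $t$ is $\varepsilon$-away from every $\pm t_j$ handles these, bounding $1/|s-s_j|$ by $\varepsilon^{-1}$. All bounds are uniform in $z$ because \eqref{Rjbound} rests on the sup-norm estimate, and uniform in $\sigma$ because $(\sigma-\frac12)^2$ is bounded on the strip.
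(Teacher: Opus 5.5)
Your proof is correct. It uses the same ingredients as the paper's proof: the uniform termwise bound \eqref{Rjbound}, the Gaussian factor, the $\varepsilon$-separation from the spectral points, and Weyl's law. The difference is in how you organize the $j$-sum.

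The paper uses only two ranges:
\begin{itemize}
\item For $|t_j|>2|t|$ it uses $|t+t_j|\ge |t_j|/2$, so the factor $e^{-t_j^2/4}$ makes that tail $O(1)$.
\item For $|t_j|\le 2|t|$ it bounds the Gaussian by a constant and $1/|s-s_j|$ by $\varepsilon^{-1}$. It then multiplies $(1+|t|)^{17/12+\varepsilon}$ by the Weyl count $\ll |t|^2$.
\end{itemize}

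Your unit windows $k\le|t+t_j|<k+1$ keep the weight $e^{-k^2}$ throughout. With the crude count $\ll(1+|t|+k)^2$ in each window you get exactly $41/12$. So for the stated lemma your route is a more finely graded version of the same estimate.

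What the finer decomposition buys is your observation that a local count in unit windows improves the exponent to $29/12$. Two caveats apply to that optional improvement:
\begin{itemize}
\item The local count does not follow from differencing the leading asymptotic $\#\{|t_j|\le T\}\sim cT^2$. You need either a Weyl law with remainder $O(T^{1+\varepsilon})$, which is available for congruence groups, or a direct local bound.
\item The improvement gains nothing downstream. In Proposition \ref{phibound} the contribution of $E^o$ is dominated by the $|t|^{5+\varepsilon}$ coming from the functional equation on $\Re(s)=\frac12-\varepsilon$.
\end{itemize}
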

\begin{proof}
By \eqref{Rjbound} and the Gaussian decay of $e^{(s-s_j)^2}$, $E^o$ converges absolutely for each $s \neq s_j, \bar s_j$. 
Again with \eqref{Rjbound}  
\begin{equation}\label{EoBig} 
\sum_{|t_j| > 2 |t|} \left | \frac{R^+_j(z) e^{(s-s_j)^2}}{s-s_j} \right | \ll \sum_{|t_j| > 2 |t|}  \frac{|t_j|^{\frac{17}{12}+\varepsilon} e^{-\frac{|t_j|^2}{4}}}{|t_j|}=O(1), \quad \text{since $|t+t_j| \ge |t_j|/2$.}
\end{equation}
Finally, with Weyl's law, we obtain
\begin{equation}\label{EoSmall} 
\sum_{|t_j| \le 2 |t|} \left | \frac{R^+_j(z) e^{(s-s_j)^2}}{s-s_j} \right | \ll \sum_{|t_j|  \le 2 |t|}  (1+|t_j|)^{\frac{17}{12}+\varepsilon} \ll (1+|t|)^{2+\frac{17}{12}+\varepsilon}.
\end{equation}
The terms corresponding to $R^-_j(z)$ are treated analogously.
\end{proof}
Because of this lemma, and since $E^o$ has the same residues as $E^*(z, s)$ on $\Re(s)=\frac12$ 
\begin{equation}\label{DefEtilde} 
\tilde E(z, s)\coloneqq E^*(z, s)- E^o(z, s)
\end{equation}
has only the pole at $s=1$ and the exceptional poles
in $\Re(s)>\frac12-\varepsilon$, where $\varepsilon$ is small enough for $1-s$ to avoid any potential exceptional values in $(\frac12, 1)$.  Therefore, Phragmen-Lindelof holds and we have the following
\begin{prop} \label{phibound} For $|t|>\varepsilon$ and at least $\varepsilon$ away from any $\pm t_j$, we have
\begin{equation}\label{boundphi<1/2}\zeta(2s)^2 K_{s-\frac12}(2 \pi |n|) \phi^*(n, s)=O_{f, n, \varepsilon}( (1+|t|)^{5
+\varepsilon}). \end{equation}
uniformly for $\Re(s)$ in $[\frac12-\varepsilon, 1+\varepsilon].$
\end{prop}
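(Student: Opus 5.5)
We apply Phragmén–Lindelöf in the strip $\frac12-\varepsilon\le\Re(s)\le1+\varepsilon$ to the holomorphic function
$$G(s)\coloneqq \zeta(2s)^2 K_{s-\frac12}(2\pi|n|)\,\phi^*(n,s)\cdot(\text{polar factors}),$$
where the polar factors cancel the pole at $s=1$ and the finitely many exceptional poles. Since $|t|>\varepsilon$ is assumed, these poles only affect a compact region and can be removed by multiplying by $(s-1)\prod_{j\le M}(s-s_j)(s-\bar s_j)$, which is polynomial in $|t|$. We do not work with $\phi^*(n,s)$ directly, because it has the infinitely many poles on $\Re(s)=\frac12$. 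Instead we split the Fourier coefficient as in \eqref{DefEtilde}:
$$2\sqrt{|n|}K_{s-\frac12}(2\pi|n|)\phi^*(n,s)=\int_0^1 \tilde E(x+i,s)e^{-2\pi inx}dx+\int_0^1E^o(x+i,s)e^{-2\pi inx}dx .$$
By Lemma~\ref{Rbound}, the second piece is $O((1+|t|)^{41/12+\varepsilon})$ uniformly in the strip, away from the $\pm t_j$. Multiplying by $\zeta(2s)^2$, which is $\ll|t|^{\varepsilon}$ for $\Re(2s)\ge1-2\varepsilon$ by convexity, keeps this well below $(1+|t|)^{5+\varepsilon}$. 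It therefore remains to bound the $\tilde E$ piece, which is now holomorphic apart from the finitely many removable poles.

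On the right edge $\Re(s)=1+\varepsilon$, \eqref{bound2plus} gives $\Gamma(s)\phi^*(n,s)\ll1$. Lemma~\ref{boundK} (via \eqref{boundK1}) gives $K_{s-\frac12}(2\pi|n|)/\Gamma(s)\ll |t|^{-1/2}$. Hence $K\phi^*\ll1$ there, and the $E^o$ contribution is $\ll |t|^{41/12+\varepsilon}$. So on this edge $\zeta(2s)^2\int\tilde E\ll|t|^{41/12+\varepsilon}$.

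On the left edge $\Re(s)=\frac12-\varepsilon$ we use the functional-equation expression \eqref{Fcoeff}. The coefficients $\phi^*_{\infty\infty}(n,1-s)$ and $\phi^*_{0\infty}(n,1-s)$ then sit at $\Re(1-s)=\frac12+\varepsilon$, where \eqref{boundphi} gives $\Gamma(1-s)\phi^*(n,1-s)\ll|t|^{\frac{21}2-10(\frac12+\varepsilon)+\varepsilon}\approx|t|^{11/2}$. The ratio $\zeta^*(2-2s)/\zeta^*(1-2s)$ contributes gamma factors of size $|t|^{1/2}$ together with $\zeta(2-2s)/\zeta(1-2s)$. The $\zeta(2s)^2$ we multiplied by is exactly what is needed to cancel the denominators: the functional equation turns $1/\zeta(1-2s)$ into $1/\zeta(2s)$ times gamma factors, and \eqref{boundphi*0} controls $\zeta(2s)^2\phi^*_{\infty0}(s)\ll|t|^{\varepsilon}$. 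For the term $\phi_{0\infty}(n,1-s)$ we use its explicit formula from the proof of Lemma~\ref{phi*infty0}, which involves $1/\zeta(2-2s)$ and is bounded. Finally we collect the Stirling factors $\Gamma(s)/\Gamma(1-s)$, $K_{s-\frac12}\sim\Gamma(s-\frac12)$, etc.; they are all polynomial in $|t|$ because the exponentials $e^{\pm\pi|t|/2}$ cancel. The $E^o$ contribution is again $\ll|t|^{41/12+\varepsilon}$.

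Phragmén–Lindelöf then interpolates between the two edges; the finite order needed for it comes from the standard a priori growth of $E^*$. This yields a polynomial bound throughout the strip, which we expect to be at most $(1+|t|)^{5+\varepsilon}$ after the $\varepsilon$-losses are absorbed.

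The main obstacle is the left-edge bookkeeping: matching the gamma factors precisely so that the exponentials cancel and the exponent comes out at most $5$. In particular, \eqref{boundphi} is only valid uniformly on compact subsets of $(\frac12,1]$, so we must take $\Re(1-s)=\frac12+\varepsilon$ fixed. If the exponent from \eqref{boundphi} turns out too large, we would instead apply the bound directly to $\tilde E$ via \eqref{boundE*E} and Lemma~\ref{Rbound} on the line $\Re(s)=\frac12+\varepsilon$, and use the functional equation only for the reflected portion.
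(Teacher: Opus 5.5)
Your proposal is correct and is essentially the paper's argument. Both apply Phragm\'en--Lindel\"of to $\zeta(2s)^2(E^*-E^o)$ between the same two edges: $\Re(s)=1+\varepsilon$ (absolute convergence plus Lemma~\ref{Rbound}) and $\Re(s)=\frac12-\varepsilon$ (the functional equation, the Petridis--Risager bounds at $\Re(1-s)=\frac12+\varepsilon$, and \eqref{boundphi*0}). The only difference is that you work with the $n$-th Fourier coefficient throughout via \eqref{Fcoeff} and \eqref{boundphi}, while the paper bounds $\zeta(2s)^2E^*(z,s)$ pointwise via \eqref{FEspec} and \eqref{boundE*E} and extracts the coefficient at the end.

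Your left-edge bookkeeping does come out to exactly $|t|^{5+O(\varepsilon)}$, so no fallback is needed. One small fix: $\zeta(2s)^2$ has a double pole at $s=\frac12$ inside your strip, and your polar factor does not cancel it. Either add $(s-\frac12)^2$ to the polar factor or, as the paper does, apply Phragm\'en--Lindel\"of separately in the half-strips $t>\varepsilon$ and $t<-\varepsilon$.
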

\begin{proof} We will first apply convexity to $\zeta(2s)^2 \tilde E(z, s)$ in the strip $\Re(s) \in [\frac12-\varepsilon, 1+\varepsilon]$ and we will then combine it with the last lemma to bound $\zeta(2s)^2 E^*(z, s).$ From this, we will deduce the claimed bound for the Fourier coefficients of $E^*(z, s)$.

Since $E^*(z, s)$ and $\zeta(2s)$ are absolutely convergent for $\Re(s)>1$ \cite[Prop. 2.6]{PR}, Lemma \ref{Rbound} implies that, for $\Re(s)=1+\varepsilon$, $\zeta(2s)^2 \tilde E(z, s) \ll (1+|t|)^{\frac{41}{12}+\varepsilon}$, uniformly for $z$ in a compact set $K.$ 

To bound $\tilde E(z, s)$ on $\Re(s)=\frac12-\varepsilon$, we use the functional equation \eqref{FE} to obtain
\begin{multline}\label{FEspec} \zeta(2s)^2 E^*(z, s)=\frac{\pi^{2s-1}(p-1)}{p^{2s}-1}\frac{\Gamma(1-s)}{\Gamma(s)}\zeta(2-2s) \zeta(2s) E^*(z, 1-s)\\
+ \frac{\pi^{2s-1}(p^s-p^{1-s})}{(p^{2s}-1)} \frac{\Gamma(1-s)}{\Gamma(s)}\zeta(2-2s) \zeta(2s)  E^*_0(z, 1-s)+\phi^*_{\infty \, 0}(s) \zeta(2s)^2 E_0(z, 1-s).
\end{multline}
For $\Re(s)=\frac12-\varepsilon$, we have $\zeta(2s) \ll |t|^{\varepsilon}$ and $\zeta(2-2s) \ll 1$, as $|t| \to \infty$. Therefore, with $\Gamma(1-s)/\Gamma(s) \sim |t|^{2 \varepsilon}$ and \eqref{boundE*E}, we deduce that, as $|t| \to \infty$, the first and second terms in the RHS of \eqref{FEspec} are $|t|^{5+\varepsilon},$ for some $\varepsilon>0$. For the last term of \eqref{FEspec}, we use \eqref{boundphi*0} and \eqref{boundE*E}, to deduce that it is  $|t|^{2+\varepsilon}.$ Therefore,  for $\Re(s)=\frac12-\varepsilon$,  we have,
\begin{equation}\label{boundE*1/2} \zeta(2s)^2 E^*(z, s) \ll |t|^{5+\varepsilon}+ |t|^{2+\varepsilon} \ll  |t|^{5+\varepsilon}, \qquad \text{as $|t| \to \infty$}\end{equation}
uniformly for $z$ in a compact set $K.$ Because of Lemma \ref{Rbound} and $\zeta(1-\varepsilon+it)=O(|t|^{\varepsilon}),$ we deduce the same bound for $\zeta(2s)^2 \tilde E(z, s).$

We now apply Phragmen-Lindelof to $\zeta(2s)^2 \tilde E(z, s)$ in the upper (resp. lower) strip defined by $\Re(s) \in [\frac12-\varepsilon, 1+\varepsilon]$ and $|t|>\varepsilon$. Since  $\zeta(2s)^2 \tilde E(z, s)$ is holomorphic there, we deduce that 
\begin{equation}\label{boundEtilde} \zeta(2s)^2 \tilde E(z, s) \ll |t|^{5
+\varepsilon} \qquad \text{as $|t| \to \infty$}\end{equation}
uniformly for $\Re(s)$ in $[\frac12-\varepsilon, 1+\varepsilon]$ and for $z$ in a fixed compact set $K.$  Therefore, with Lemma \ref{Rbound}, we deduce that, for $|t|>\varepsilon$ and at least $\varepsilon$ away from any $\pm t_j$, we have
\begin{equation}\label{boundE*} \zeta(2s)^2 E^*(z, s) \ll |t|^{
5+\varepsilon}+ |t|^{\frac{41}{12}+\varepsilon} \ll |t|^{
5+\varepsilon}, \end{equation} uniformly for $\Re(s) \in [\frac12-\varepsilon, 1+\varepsilon]$ and for $z$ in a compact set $K.$ The proposition follows from the identity $K_{s-\frac12}(2 \pi |n|) \phi^*(n, s)=\frac{1}{2\sqrt{|n|}}\int_0^1 E^*(x+i, s)e^{-2 \pi i n x}dx$.
\end{proof}

\subsection{Proof of Theorem \ref{main}.}
For $X>1,$ we will study the integral 
$$I=\frac{1}{2 \pi i} \int_{(1+\varepsilon)} F_{n, k}(X, s)ds, \qquad   \text{where}$$
 \begin{equation}\label{Fnk}
F_{n, k}(X, s)\coloneqq \frac{\zeta(2s)^2 \Gamma(s)^2}{\Gamma(s+k)} \phi^*(n, s) K_{s-\frac12}(2 \pi |n|) I_{s-\frac12}(2 \pi |n|) X^s.
\end{equation}
We first compute it directly to obtain Riesz-type sums involving the modified Kloosterman sums $S^*(m, 0, pc)$.

By \eqref{bound2plus}, Lemma \ref{boundK} and the exponential decay of $\Gamma(s)$, we deduce that $I$ is absolutely convergent and, in particular, we can interchange summation and integration to derive, from \eqref{n} and the series expression for $\zeta(2s)^2$,
\begin{equation}\label{I1Th}I=\sum_{c, m>0} \frac{S^*(n, 0; pc) \sigma_0(m)}{2 \pi i |n|} \int_{(1+\varepsilon)} F_n(m; X, s) ds, \end{equation}
where 
$$F_{n, k}(m; X, s)\coloneqq \frac{\Gamma(s)}{\Gamma(s+k)} K_{s-\frac12}(2 \pi |n|) I_{s-\frac12}(2 \pi |n|) \left ( \frac{\pi X |n|}{(pcm)^2}\right )^s.$$

With \cite[(10.32.16)]{DLMF} and the change of variables $y=e^{-2t}$ we have
\begin{multline*} K_{s-\frac12}(2 \pi |n|) I_{s-\frac12}(2 \pi |n|)=\int_0^\infty  J_0(4 \pi |n| \sinh  t)e^{-2 \left ( s-\frac12\right) t} dt \\=
\int_0^1 \frac{J_0(2 \pi |n| (y^{-\frac12}-y^{\frac12}))}{2 \sqrt{y}} y^{s-1}dy=\mathcal M \left [g \right ](s),
\end{multline*}
where
$$g(y)\coloneqq \chi_{(0, 1)}(y) \frac{J_0(2 \pi |n| (y^{-\frac12}-y^{\frac12}))}{2\sqrt{y}}$$
and $\mathbf 1_{(0, 1)}(x)$ is the characteristic function of $(0, 1)$.  Further, with \eqref{std-taub-int}, we have $\Gamma(s)/\Gamma(s+k)=\mathcal M \left [h \right ](s),$ where
$$h(y)\coloneqq \mathbf 1_{(0, 1)}(y) \frac{(1-y)^{k-1}}{(k-1)!} $$
Thus, for $Y=(pcm)^2/(\pi X |n|)$,
$$F_{n, k}(m; X, s) =\mathcal M \left [h \right ](s) \mathcal M \left [g \right ](s)Y^{-s}.$$ 

If $h \star g$ denotes the multiplicative convolution, then the Mellin convolution theorem implies that
\begin{multline}\frac{1}{2 \pi i } \int_{(1+\varepsilon)}F_{n, k}(m; X, s) ds =
\mathcal M^{-1} \left ( \mathcal M \left [h \right ](s) \mathcal M \left [ g \right ](s)\right )(Y)=(h \star g)(Y)\\  
=\int_0^{\infty}\mathbf 1_{(0, 1)}\left ( \frac{Y}{t}\right )\frac{\left (1-\frac{Y}{t} \right )^{k-1}}{(k-1)!} \, \mathbf 1_{(0, 1)}(t) 
\frac{J_0(2 \pi |n| (t^{-\frac12}-t^{\frac12}))}{2\sqrt{t}} \frac{dt}{t} \\=
\begin{cases} 0 \qquad \qquad \text{if $Y \ge 1$} \\ \int_Y^{1}\frac{\left (1-\frac{Y}{t} \right )^{k-1}}{(k-1)!} \frac{J_0(2 \pi |n| (\sqrt{t}^{-1}-\sqrt{t}))}
{2\sqrt{t}} \frac{dt}{t} \, \, \text{if $0<Y<1.$} \end{cases}
\end{multline}

With \eqref{I1Th}, we deduce that 
\begin{equation}\label{V+} I={\tfrac12}\sum_{c<\frac{\sqrt{\pi X |n|}}{p}} 
\frac{S^*(n, 0; pc)}{|n|} \sum_{m=1}^{\lfloor \frac{\sqrt{\pi X |n|}}{pc} \rfloor} \frac{\sigma_0(m)}{(k-1)!}
\int_{\frac{(pmc)^2}{\pi X |n|}}^1 \left (1-\frac{(pmc)^2}{\pi X |n|t} \right )^{k-1} J_0(2 \pi |n| (\sqrt{t}^{-1}-\sqrt{t})) \frac{dt}{t^{\frac32}}.
\end{equation}

To estimate $I$, we will shift the line of integration to $\Re(s)=\frac12-\varepsilon$. Because of the infinitely many poles on the line $\Re(s)=\frac12$, this must be done carefully. 

Let $\{Y_\ell \}$ be a sequence in $\mathbb R_{>1}$ such that each $Y_\ell$ is at least $\varepsilon$ away from any $\pm t_j$ and $Y_\ell \to \infty$ as $\ell \to \infty$. We consider the horizontal (resp. vertical) oriented segments $H_\ell^{+}\coloneqq [\frac12-\varepsilon+iY_\ell, 1+\varepsilon +i Y_\ell], H_\ell^{-}\coloneqq [ \frac12-\varepsilon-iY_\ell, 1+\varepsilon -i Y_\ell]$, $V_\ell^{+}\coloneqq [1+\varepsilon-iY_\ell, 1+ \varepsilon +iY_\ell]$ and $V_\ell^{-}\coloneqq [\frac12-\varepsilon-iY_\ell, \frac12 - \varepsilon +iY_\ell]$.
We have 
\be\label{Y_N} \frac{1}{2 \pi i}\int_{V_\ell^+}  F_{n, k}
(X, s)ds=
\frac{1}{2 \pi i} \int_{V_\ell^-} F_{n, k}
(X, s)ds+ R_\ell+
\frac{1}{2 \pi i} \left (\int_{H^+_\ell}-\int_{H^-_\ell} \right ) F_{n, k}
(X, s)ds
\ee
where
$R_\ell$ is the sum of the residues of $F_{n, k}
(X, s)$ at all poles $s$ with $|\Im(s)|<Y_\ell$ and $\Re(s) \in (\frac12-\varepsilon, 1+\varepsilon)$.

We will first show that the integrals over $H_\ell^{\pm}$ in \eqref{Y_N} tend to $0$ as $\ell \to \infty.$ The integrand of the integral over $H_\ell^{\pm}$ can be written as
$$\frac{\Gamma(y \pm i Y_\ell)}{\Gamma(y+k  \pm i Y_\ell)} \, \zeta(2y \pm 2iY_\ell)^2 K_{y-\frac12 \pm i Y_\ell}(2 \pi |n|) \phi^*(n, y \pm i Y_\ell) \, \Gamma(y \pm i Y_\ell) I_{y-\frac12  \pm i Y_\ell}(2 \pi |n|) X^{y \pm i Y_\ell}.$$
By Stirling, Proposition \ref{phibound} and  Lemma \ref{boundK}, we deduce, for $Y_n>1$, that the integrand is 
$$ \ll Y_\ell^{-k} Y_\ell^{5
+\varepsilon}Y_\ell^{-\frac12}  (\pi |n|)^{y} X^{y}=Y_\ell^{\frac92
-k+\varepsilon} (\pi |n| X)^y$$
uniformly for $y \in [\frac12-\varepsilon, 1+\varepsilon].$
Therefore, we have
\begin{equation}\label{H+}\int_{H^{\pm}_\ell} F_{n,k}(X, s)ds 
\ll Y_\ell^{\frac92
-k} \int_{\frac12-\varepsilon}^{1+\varepsilon} (\pi |n| X)^ydy  \\
\ll Y_\ell^{
\frac92
-k}( \pi |n| X)^{1+\varepsilon}\end{equation}
which tends to  $0$ as $ \ell \to \infty$, when $k > 6$.
 
For the integral over $V_\ell^-$, we apply again Stirling, Proposition \ref{phibound} and  Lemma \ref{boundK}, to obtain
$$F_{n, k}(X, s) \ll (1+|t|)^{-k+5-\frac12}X^{\frac12-\varepsilon} \qquad \text{for $s=\frac12-\varepsilon+it$,}$$  
as $t\to \infty$. Therefore, for $k \ge 6$, the integral over $(\frac12-\varepsilon)$ of $F_{n, k}(X, s)$ is absolutely convergent and we have
\be \label{V_} \lim_{\ell \to \infty}\int_{V_\ell^-}  F_{n, k}
(X, s)ds=
\int_{(\frac12-\varepsilon)}  F_{n, k}
(X, s)ds=O(X^{\frac12-\varepsilon}).
\ee
Applying \eqref{V+}, \eqref{H+} and \eqref{V_} to \eqref{Y_N}, we deduce
\begin{equation}\label{finalint}
\sum_{c<\frac{\sqrt{\pi X |n|}}{p}} 
S^*(n, 0; pc) \psi_c(n, k, X) =\lim_{\ell} R_{\ell}+O(X^{\frac12-\varepsilon}).
\end{equation}
To compute $\lim_{\ell} R_{\ell}$ we recall that the crossed poles are $1$, $s_j$ ($1 \le j \le M$), $s_j, \bar s_j$ ($j>M$) and $1/2.$
For the computation of the residues at each but the last pole, we use \eqref{Fnk} and the residue of $\phi^*(n, s)$ inherited by the residue of $E^*(z, s)$. 

For the pole at $1/2$ we note that by the Fourier expansion we have, for $n\neq 0$,
\begin{equation}\label{eq:phi_ns}
K_{s-\frac12}(2\pi |n|)\phi^*(n,s)=\frac{1}{2\sqrt{|n|}}\int_0^1 E^*(x+i,s)e^{-2\pi i n x}\,dx .
\end{equation}
Since the level is squarefree, $E^*(z,\frac12)=0$ (\cite{OTh}). Hence, as
$s\to \frac12$,
$$K_{s-\frac12}(2\pi |n|)\phi^*(n,s)=\frac{s-\frac12}{2\sqrt{|n|}}
\int_0^1 \left.\frac{\partial}{\partial s}E^*(x+i,s)\right|_{s=\frac12}
e^{-2\pi i n x}\,dx+
O\!\left(\left(s-\frac12\right)^2\right).
$$
Combining this with the Laurent expansion of 
$\zeta(2s)$ at $s=1/2$, we obtain the residue at $1/2$ as in \eqref{final}. 

To verify the absolute convergence of the sum over $j$ in the right-hand side of \eqref{final}, we first use the bound $\rho_{\ell}(n) \ll_n e^{\frac{\pi |t_j|}{2}} |t_j|^{\frac12+\varepsilon}$, 
\eqref{RSbound} and Weyl's Law to deduce
$$\sum_{\substack{\ell : \lambda_\ell=\lambda_j}} L_{f \otimes \eta_{\ell}}(s_j)\rho_\ell(n) \ll |t_j|^{1+\varepsilon} \left (e^{\frac{\pi |t_j|}{2}}  |t_j|^{\frac12+\varepsilon} \right ) \left (  e^{\frac{\pi |t_j|}{2}} |t_j|^{\frac12+\varepsilon} \right )=e^{\pi |t_j|}  |t_j|^{2+3\varepsilon} .$$
Stirling's formula and Lemma \ref{boundK} imply that the remaining factors of the $j$-th term of the sum are $\ll e^{-\pi |t_j|}|t_j|^{-\frac32-k+\varepsilon}$. Therefore, the $j$-th term is $\ll |t_j|^{\frac12-k+\varepsilon}$ and hence, with Weyl's law, the series converges for $k>5/2$. The sum corresponding to $R_j^-(n)$ is treated similarly.

\section{A modified Kloosterman zeta function}
\label{6}
In this section we will show an analogue of the main result of \cite{GS} for 
$$Z^*_{m, n}(s)\coloneqq \sum_{N|c}\frac{S^*(m, n; c)}{c^{2s}}$$ where $S^*(m, n; c)$ denotes the  modified Kloosterman sum defined in \eqref{Kl*}.  From \eqref{boundS*} we deduce that $Z^*_{m, n}(s)$ converges for $\Re(s)>1$.  We will follow the perturbation theory approach of \cite{P}. 

\subsection{Expression of $Z_{m, n}^*$ in terms of Kloosterman zeta functions associated with characters.}
We let $w^1$ (resp. $w^2$) be the real (resp. imaginary) part of $f(z)dz$. For $\epsilon>0$ and $i=1, 2$ we define the \emph{unitary} character of $\Gamma=\Gamma_0(N)$
$$\chi_{\epsilon}^i(\gamma)=\text{exp}\left ( -2 \pi i \epsilon \int_{i \infty}^{\gamma i \infty} w^i\right ).$$
Set
$$Z_{m, n}(s; \chi^i_{\epsilon})\coloneqq \sum_{N|c}\frac{S_{\chi_{\epsilon}^i}(m, n; c)}{c^{2s}}, \quad \text{where, $S_{\chi_{\epsilon}^i}(m, n; c)\coloneqq \sum_{\substack{\g \in \Gamma_{\infty} \backslash \ \G \sb /\Gamma_{\infty} \\ \, \, c_\g=c}} \chi^i_{\epsilon}(\gamma)e^{2 \pi i \left (n\frac{a_{\g}}{c}+m\frac{d_{\g}}{c} \right )}.$}$$
Since $\chi_{\epsilon}^i$ is unitary, the series $Z_{m, n}(s; \chi^i_{\epsilon})$ converges absolutely and uniformly in $\epsilon$ for $\Re(s) \gg 1$. Likewise, with \eqref{boundmods}, the differentiated series converges absolutely and uniformly in $\epsilon$ for $\Re(s) \gg 1$ and hence we can differentiate $Z_{m, n}(s; \chi^i_{\epsilon})$ term-by-term to get 
$$\partial_{\epsilon}Z_{m, n}(s; \chi^i_{\epsilon}) |_{\epsilon=0}=-2 \pi i \sum_{N|c}\frac{1}{c^{2s}}  \sum_{\substack{\g \in \Gamma_{\infty} \backslash \ \G \sb /\Gamma_{\infty} \\ \, \, c_\g=c}} \left (\int_{i \infty}^{\gamma i \infty} w^i \right )  e^{2 \pi i \left (n\frac{a_{\g}}{c}+m\frac{d_{\g}}{c} \right )}.$$
Therefore
\begin{equation}\label{Z*-Zchi} Z^*_{m, n}(s)=\partial_{\epsilon}\left (Z_{m, n}(s; \chi^1_{\epsilon})+iZ_{m, n}(s; \chi^2_{\epsilon}) \right ) |_{\epsilon=0}
\end{equation}
In view of \eqref{Z*-Zchi}, we will show the meromorphic continuation of $Z^*_{m, n}$ by establishing the  meromorphic continuation and real-analytic dependence on $\epsilon$ of the functions $Z_{m, n}(s; \chi^i_{\epsilon})$. 

\subsection{Basics of Sobolev spaces}
For the analysis of $Z_{m, n}(s, \chi_{\epsilon}^i)$ as $\epsilon$ varies, and, especially, for its bound, we will use some techniques from the theory of Sobolev spaces. Since these analytic techniques may be less familiar than those used in the previous sections, we outline the background theory and its application to our setting in more detail than above, to keep the exposition self-contained. 

We state a general definition of Sobolev spaces on $X=\Gamma \backslash \mathbb H$ ($\Gamma=\Gamma_0(N)$) in the formulation of \cite[Section 2.1]{Hebey} and we then describe it in more detail in the special case we need it. 
Let $U\subset X$ be open.  For a smooth function $u\in C^\infty(U)$, set $
        \nabla^0u\coloneqq u,$ 
and for $j\geq1$, let $\nabla^j u$ denote the $j$-th covariant
derivative with respect to the Levi-Civita connection of the hyperbolic metric.  We write
$$
        \|\nabla^j u\|_{L^2(U)}^2
        \coloneqq 
        \int_U |\nabla^j u|_{\mathrm{hyp}}^2\,d\mu,
$$
where $d\mu$ is the hyperbolic volume form and
$|\cdot|_{\mathrm{hyp}}$ is the pointwise tensor norm induced by the
hyperbolic metric. The detailed definitions of those terms can be found in  \cite[Section 2.1]{Hebey}, but, in the sequel, we will only describe them in the specific cases we will need them. 
 
For an integer $k\geq0$, define
$$
        \mathcal C_k(U)
        \coloneqq 
        \left\{
        u\in C^\infty(U):
        \|\nabla^j u\|_{L^2(U)}<\infty
        \text{ for }0\leq j\leq k
        \right\}.
$$
The Sobolev space $H^k(U)$ is the completion of $\mathcal C_k(U)$ with
respect to the norm
$$
        \|u\|_{H^k(U)}
        \coloneqq 
        \left(
        \sum_{j=0}^k
        \|\nabla^j u\|_{L^2(U)}^2
        \right)^{1/2}.
$$
Equivalently, $H^k(U)$ consists of $L^2$-functions whose covariant
derivatives up to order $k$ are square-integrable in the distributional sense.

For $F \in C^{\infty}(U)$, the first covariant derivative is simply the ordinary
differential.  Thus, in the standard coordinate $z=x+iy$ on a lift to
$\mathbb H$,
$$
        \nabla F=dF=F_x\,dx+F_y\,dy.
$$
Since the hyperbolic metric is $ds^2=(dx^2+dy^2)/y^2$, 
the inverse of its matrix is given by $
        g^{ij}=y^2\delta^{ij}.$
Therefore
$$
        |\nabla F|_{\mathrm{hyp}}^2
        =
        |dF|_{\mathrm{hyp}}^2
        =
        y^2\left(|F_x|^2+|F_y|^2\right).
$$
and, hence, in a coordinate patch
    $$    \|\nabla F\|_{L^2(U)}^2
        =
        \int_U
        y^2\left(|F_x|^2+|F_y|^2\right)
        \frac{dx\,dy}{y^2}
        =
        \int_U
        \left(|F_x|^2+|F_y|^2\right)\,dx\,dy.
$$
Thus, if $U$ is contained in a coordinate patch which has been lifted to
$\mathbb H$, we have
$$    \|F\|_{H^1(U)}^2
        =
        \int_U \frac{|F|^2}{y^2}\,dx\,dy
        +
        \int_U
        \left(|F_x|^2+|F_y|^2\right)\,dx\,dy$$
If $K$ is a compact subset of $U$, then $y$ is bounded
above and below on $K$ and hence
$$\|F\|_{H^1(K)}^2    \asymp_K
        \int_K        \left(|F|^2+|F_x|^2+|F_y|^2
        \right)\,dx\,dy.
$$
Here $\|F\|_{H^1(K)}$ denotes the local Sobolev norm of the restriction of $F$ to $K$, and the constants depend only on $K$.
Similarly, for open $U\subset X$,
$$
        \|F\|_{H^2(U)}^2
        =
        \|F\|_{L^2(U)}^2
        +
        \|\nabla F\|_{L^2(U)}^2
        +
        \|\nabla^2F\|_{L^2(U)}^2,$$
where $\nabla^2F$ is the covariant Hessian.  As above, on every open $U$ with compact closure contained in a coordinate patch, this norm is equivalent to the following norm:
$$    \|F\|_{H^2(U)}^2
        \asymp_U
        \int_U
        \left(
        |F|^2+|F_x|^2+|F_y|^2
        +|F_{xx}|^2+|F_{xy}|^2+|F_{yy}|^2
        \right)\,dx\,dy.
$$

We will now state two general lemmas (see, e.g. \cite[pg. 234]{Brezis} and \cite[Theorem 9.11]{GilbargTrudinger} respectively) we will use.
We employ the notation $A \Subset B$ which means $\bar A$ is compact and $\bar A \subset B$.
\begin{lem}[Local Sobolev interpolation]
\label{interpol}
Let $V\Subset V'\Subset X$ be relatively compact open subsets. Then, for every $F\in H^2(V')$,
\begin{equation}\label{multInterpol}
        \|F\|_{H^1(V)} \ll_{V, V'} \|F\|_{L^2(V')}^{1/2}\|F\|_{H^2(V')}^{1/2}.
\end{equation}
\end{lem}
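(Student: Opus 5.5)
The plan is to reduce the statement to the flat Euclidean interpolation inequality on coordinate patches, using a partition of unity and cutoff functions. Since $\bar V$ is compact and contained in $V'$, we cover $\bar V$ by finitely many open sets $U_1,\dots,U_r$. Each $U_i \Subset V'$ is chosen small enough to lift isometrically to a relatively compact disc in $\mathbb H$ that avoids the elliptic fixed points; these are finitely many, and near them one can instead work on a finite cover, or note that the claim is local on the smooth orbifold chart. On each such lifted patch $y$ is bounded above and below. By the equivalences recorded just before the lemma, the hyperbolic norms $\|\cdot\|_{L^2}$, $\|\cdot\|_{H^1}$ and $\|\cdot\|_{H^2}$ are therefore comparable, with constants depending only on the patch, to the Euclidean norms built from $|F|^2+|F_x|^2+|F_y|^2$ and the second derivatives in $dx\,dy$. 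So it suffices to prove the Euclidean statement on each patch and then sum over $i$:
$$\|F\|_{H^1(V)}^2\le \sum_i \|F\|_{H^1(V\cap U_i)}^2 .$$

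On a single patch, we fix a smooth cutoff $\chi_i$ with compact support in a slightly larger patch $U_i'\Subset V'$ and with $\chi_i\equiv 1$ on $U_i$. Set $G=\chi_i F$, extended by zero to $\mathbb R^2$. Then $\|F\|_{H^1(U_i)}\le \|G\|_{H^1(\mathbb R^2)}$. The core step is the Euclidean interpolation inequality
$$\|\nabla G\|_{L^2}^2=\int |\xi|^2|\hat G|^2\le \Big(\int|\hat G|^2\Big)^{1/2}\Big(\int|\xi|^4|\hat G|^2\Big)^{1/2},$$
which follows from Plancherel and Cauchy--Schwarz. Alternatively, one can integrate by parts, $\int|\nabla G|^2=-\int \bar G\,\Delta G$, and apply Cauchy--Schwarz. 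Either way this gives $\|G\|_{H^1}\ll \|G\|_{L^2}^{1/2}\|G\|_{H^2}^{1/2}$, after adding $\|G\|_{L^2}^2\le\|G\|_{L^2}\|G\|_{H^2}$ to the gradient bound.

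Next we return from $G$ to $F$ on $V'$. We have $\|G\|_{L^2}\le\|F\|_{L^2(V')}$ trivially. By the Leibniz rule, $\|G\|_{H^2}\ll_{\chi_i}\|F\|_{H^2(V')}$, since the derivatives of $\chi_i$ are bounded with constants depending only on $U_i,U_i'$. Combining, squaring, and summing over the finitely many $i$ gives the lemma, with the implied constant depending only on the choice of cover and cutoffs, hence only on $V,V'$.

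The only genuinely delicate point is the bookkeeping that keeps the right-hand side in product form rather than turning it into $\|F\|_{H^2}$ alone. The integration-by-parts/Cauchy--Schwarz step above handles this directly, because the cutoff derivatives only enter the $H^2$ factor and are absorbed there. A minor additional care is needed for charts around elliptic points and for the density argument: prove the inequality for smooth $F$ and extend to $H^2(V')$ by completion, as in the definition of $H^k$.
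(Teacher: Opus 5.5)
Your proof is correct, but it takes a different route from the paper, which gives no argument of its own and only cites a standard (Gagliardo--Nirenberg type) interpolation inequality from Brezis, p.~234. The paper leaves implicit both the transfer from Euclidean domains to the hyperbolic quotient $X$ and the interior setting $V\Subset V'$.

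You instead prove exactly the special case needed, in three steps:
\begin{enumerate}
\item You localize with cutoffs $0\le\chi_i\le1$ supported in $V'$.
\item You use that, on compact sets away from the cusps, the hyperbolic $L^2$, $H^1$ and $H^2$ norms are equivalent to the Euclidean ones. The Christoffel symbols of $y^{-2}(dx^2+dy^2)$ are $O(1/y)$, so they only add first-order terms, and the full $H^2$ norm absorbs these.
\item You apply the whole-plane inequality $\|\nabla G\|_{L^2}^2\le\|G\|_{L^2}\|G\|_{H^2}$, obtained from Plancherel and Cauchy--Schwarz, or equivalently from $\int|\nabla G|^2=-\int\bar G\,\Delta G$.
\end{enumerate}

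You correctly isolate the key point: $\|\chi_iF\|_{L^2}\le\|F\|_{L^2(V')}$ involves no derivatives of the cutoff. So the cutoff costs only a constant in the $H^2$ factor, and the product form survives. This is also why the interior statement needs no regularity of $\partial V'$ and no extension operator.

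Your treatment of elliptic points is terse but sound. On a hyperbolic disc centred at an elliptic point, the lifted function has norms equal to a fixed multiple of those on the orbifold chart, and the cutoff need not be invariant under the stabilizer.

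Your route buys a self-contained argument in which it is visible that the constant depends only on $V,V'$, through the finite cover and the cutoffs. The citation buys brevity and the general $L^p$, higher-order version, which the paper never uses.
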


\begin{lem}[Interior regularity estimate]  
\label{intreg} If
$V\Subset U\Subset X$ are relatively compact open subsets, $F\in L^2(U)$,
and $\Delta F\in L^2(U)$ in the distributional sense, then
$F\in H^2(V)$, and
\begin{equation}\label{interior}
        \|F\|_{H^2(V)}
        \ll_{V,U}
        \|\Delta F\|_{L^2(U)}
        +
        \|F\|_{L^2(U)}.
\end{equation}
\end{lem}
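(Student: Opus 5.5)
The statement just before the cutoff is Lemma~\ref{intreg}, the interior regularity estimate. This is a standard local elliptic estimate, so the plan is to reduce it to the Euclidean $L^2$ theory for the Laplacian on bounded domains in a coordinate chart.

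\textbf{Step 1: localize.} Since $\bar V$ is compact in $U$, cover $\bar V$ by finitely many small open discs $D_i$. Choose each $D_i$ so that a slightly larger concentric disc $D_i'$ satisfies $D_i'\Subset U$ and lifts isometrically to a relatively compact disc in $\mathbb H$ away from the elliptic fixed points. This is possible since $\Gamma$ acts properly discontinuously. Near elliptic points one instead uses the smooth orbifold chart, where the hyperbolic metric is still a smooth Riemannian metric. It then suffices to prove the estimate with $V$, $U$ replaced by $D_i$, $D_i'$, and to sum over the finitely many $i$. The implied constant then depends only on the cover, hence only on $V$ and $U$.

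\textbf{Step 2: the estimate in a chart.} On $D_i'$ we have $\Delta=-y^2(\partial_x^2+\partial_y^2)$. Since $y$ is bounded above and below on $D_i'$, the hypothesis $\Delta F\in L^2$ is equivalent to $g\coloneqq \partial_x^2F+\partial_y^2F=-y^{-2}\Delta F\in L^2(D_i',dx\,dy)$. Similarly, $\|F\|_{L^2(D_i')}$ is comparable to the Euclidean $L^2$ norm. By the equivalences of norms on relatively compact sets recorded above, the hyperbolic $H^2$ norm on $D_i$ is comparable to the Euclidean one, that is, the $L^2$ norms of the partial derivatives up to order two. So we need
\[
\|F\|_{H^2_{\rm eucl}(D_i)}\ll \|g\|_{L^2(D_i')}+\|F\|_{L^2(D_i')}.
\]

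\textbf{Step 3: prove the Euclidean estimate.} Take a cutoff $\psi\in C_c^\infty(D_i')$ with $\psi=1$ on $D_i$, and set $w=\psi F$. Then
\[
\Delta_{\rm eucl}w=\psi g+2\nabla\psi\cdot\nabla F+(\Delta_{\rm eucl}\psi)F .
\]
For compactly supported $w$, Plancherel gives $\sum_{|\alpha|=2}\|\partial^\alpha w\|_{L^2}=\|\Delta_{\rm eucl} w\|_{L^2}$ up to constants, since $\xi_j\xi_k\hat w$ is controlled by $|\xi|^2\hat w$. This holds first for smooth $w$ and extends to distributions by mollification. Mollification commutes with constant-coefficient operators, so applying the identity to mollified $w$ and passing to the limit yields $w\in H^2$ once the right side lies in $L^2$.

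\textbf{Main obstacle.} The right side contains the first-order term $\nabla F$, which is not yet known to be in $L^2$. To handle it, first derive an $H^1$ bound by testing against $\psi^2\bar F$: integration by parts gives $\|\psi\nabla F\|^2\ll \|g\|\,\|F\|+\|F\|^2$. This is justified via mollification, or via difference quotients in the manner of Gilbarg--Trudinger's Theorem 9.11, and it requires only $F,g\in L^2$. Run this $H^1$ bound on an intermediate disc, then apply the $H^2$ bound of Step 3 on $D_i$. Chaining the two through nested discs $D_i\Subset D_i''\Subset D_i'$ gives the desired estimate. Absorbing terms with the interpolation inequality of Lemma~\ref{interpol} is an alternative route.
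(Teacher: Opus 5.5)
Your argument is correct, but it does not follow the paper, which gives no proof of this lemma and simply quotes Gilbarg--Trudinger, Theorem~9.11. That theorem is the general interior $W^{2,p}$ estimate for variable-coefficient elliptic operators, to be applied in charts.

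You instead give a self-contained $p=2$ proof:
\begin{enumerate}
\item localize to finitely many lifted discs, using orbifold charts at elliptic points;
\item rewrite the hypothesis as $\partial_x^2F+\partial_y^2F=-y^{-2}\Delta F\in L^2$, so the operator has constant coefficients and commutes with mollification;
\item get the $H^1$ bound from a Caccioppoli estimate on mollified $F$;
\item get the $H^2$ bound for $\psi F$ from Plancherel, chaining through nested discs.
\end{enumerate}

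The citation is shorter and covers general $p$ and general elliptic coefficients. However, Gilbarg--Trudinger's Theorem~9.11 is an \emph{a priori} estimate: it assumes the solution already lies in $W^{2,p}_{\rm loc}$. So by itself it does not give the qualitative half of the lemma, namely that $F\in L^2$ with $\Delta F\in L^2$ distributionally implies $F\in H^2(V)$. The paper does rely on that half, e.g.\ to conclude that $U_\epsilon\hat F_{m,\epsilon,A}$ is locally $H^2$ and hence smooth.

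Your mollification and Fourier argument supplies exactly this regularity step. It also makes explicit the treatment of elliptic points, which the paper leaves implicit.
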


\subsection{The operators $L_{\epsilon}$.}
From now on, we omit the $i$ from $\chi_{\epsilon}^i$ and let $w$  mean either $w^1$ or $w^2$, and $\chi_{\epsilon}$ either $\chi^1_{\epsilon}$ or $\chi^2_{\epsilon}$. Further, as in \cite{P} and \cite{PR}, we assume for simplicity that there is one cusp (at infinity) because the generalisation to the multiple cusps is straightforward.
By the identification of cuspidal cohomology with cohomology with compact support, we can assume that $w$ is a compactly supported form. We denote its support by $K_w$. We recall the construction of \cite[Section 2]{P}. 

Let
$L^{2}\bigl(\Gamma\backslash\mathbb H,\overline{\chi_{\epsilon}}\bigr)$
be the space of $L^{2}$-functions $h$ transforming as
$h(\gamma z)=\overline{\chi_{\epsilon}}(\gamma)h(z)$ ($\gamma\in\Gamma$). For $\epsilon \in \mathbb  R,$ we consider the unitary operators
$
U_{\epsilon}:L^{2}(\Gamma\backslash\mathbb H)
\longrightarrow
L^{2}\bigl(\Gamma\backslash\mathbb H,\overline{\chi_{\epsilon}}\bigr)
$
given by
\begin{equation*}
\bigl(U_{\epsilon}h\bigr)(z)
=
e^{ i\epsilon W(z)}h(z) \quad \text{where $W(z)=2 \pi \int_{\infty}^z w.$}
\end{equation*}
We set
\begin{equation*}
L_{\epsilon}
=
U_{\epsilon}^{-1}\Delta U_{\epsilon}.
\end{equation*}
This operator coincides with $\Delta$ outside $\supp(w)$. Further, the operators $L_{\epsilon}$ on $L^{2}(\Gamma\backslash\mathbb H)$ and
$\Delta$ on $L^{2}\bigl(\Gamma\backslash\mathbb H,\overline{\chi_{\epsilon}}\bigr)$
are unitarily equivalent. In terms of  the hyperbolic metric, a neighborhood of the cusp $\infty$  is isometric to
$[b,\infty)\times\mathbb R/\mathbb Z,$
for some $b>0$. We choose $b$ so that $\supp(w)$ does not intersect this neighborhood. 

\subsection{The pseudo-Laplacian}
We next define pseudo-Laplacian operators associated with $L_{\epsilon}$ as in \cite[Section 2]{P}. For $A \ge a+1 > b+2,$ set
\begin{equation}\label{HA} H_{A}=\left\{
f\in L^2(X): f_{0}(y)=0, \, \, \text{for $y>A$}
\right\},
\end{equation}
where $f_{0}$ is the zero Fourier coefficient at infinity. Let $L_{\epsilon,A}$ be the Friedrichs extension in $H_A$
of the restriction to $H_{A} \cap H^1(X)$ of the quadratic form
$q(f)=\left\|\nabla U_{\epsilon}f\right\|^{2}$. The pseudo-Laplacian $L_{\epsilon,A}$ does not affect the non-zero Fourier coefficients but it removes the $0$-th Fourier coefficient for $y > A$. Thus $L_{\epsilon,A}$ acts as $L_\epsilon$ when $y<A$.
The operators $L_{\epsilon,A}$ have compact resolvents 
$$
R_{\epsilon, A}(s)
=
\bigl(L_{\epsilon,A}-s(1-s)\bigr)^{-1}
$$
that depend meromorphically on $s$ and real analytically on $\epsilon$. 

For the purpose of locating the potential poles of $Z^*_{m,n}$ we need to study more carefully the eigenvalues of $L_{\epsilon, A}$. We will show the following analog of \cite[Lemma 4.1]{P}. 
\begin{lem}\label{L4.1} Let $s_0$ with $\Re s_0>\frac12$. Suppose that $s_0(1-s_0)$ is not a cuspidal eigenvalue of $\Delta$ and $s_0$ is not a pole of $\phi(s)$. Then, $A$ may be chosen sufficiently large so that $R_{\epsilon,A}(s)$ is holomorphic in some neighborhood of $s_0$ and real-analytic in some neighborhood of $\epsilon=0$. Further,  $\partial_\epsilon R_{\epsilon,A}(s)|_{\epsilon=0}$ is holomorphic in some neighborhood of $s_0.$
\end{lem}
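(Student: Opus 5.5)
We follow the argument of \cite[Lemma 4.1]{P}. The plan has three steps: analyse the spectrum of the unperturbed pseudo-Laplacian $L_{0,A}=\Delta_A$ near $\lambda_0\coloneqq s_0(1-s_0)$, use analytic perturbation theory in $\epsilon$ to keep the perturbed spectrum away from $\lambda_0$, and then differentiate the resolvent identity.

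\emph{Step 1 (unperturbed operator).} We show that, for $A$ large, $\lambda_0$ is not an eigenvalue of $\Delta_A$. By Colin de Verdière's description, the spectrum of $\Delta_A$ consists of two parts. The first part is the cuspidal eigenvalues of $\Delta$. The second part comes from the eigenfunctions obtained by truncating $E(z,s)$ at height $A$. The latter occur exactly at those $s$ with
\[ A^{s}+\phi(s)A^{1-s}=0, \quad \text{equivalently} \quad \phi(s)=-A^{2s-1}. \]
By hypothesis, $\lambda_0$ is not a cuspidal eigenvalue. Also $s_0$ is not a pole of $\phi$, so $\phi$ is bounded on a small closed disc $D$ around $s_0$. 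On $D$ we have $\Re s\ge \sigma_0>\frac12$, so $|A^{2s-1}|\ge A^{2\sigma_0-1}\to\infty$. Hence for $A$ large enough the equation above has no solution in $D$. The residual spectrum (the poles of $\phi$ in $(\frac12,1]$) is excluded by the same hypothesis.

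Consequently, for such $A$ the point $\lambda_0$ lies at positive distance $\delta$ from the spectrum of $\Delta_A$. This spectrum is discrete because $\Delta_A$ has compact resolvent. Shrinking $D$, we may assume $s(1-s)$ stays within $\delta/2$ of $\lambda_0$ for $s\in D$.

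\emph{Step 2 (perturbation in $\epsilon$).} We expand
\[ L_\epsilon=U_\epsilon^{-1}\Delta U_\epsilon=\Delta+\epsilon L^{(1)}+\epsilon^2L^{(2)}. \]
Here $L^{(1)}h=-2i\langle dW,dh\rangle+i(\Delta W)h$ up to sign conventions, and $L^{(2)}h=|dW|^2h$. Both $L^{(1)}$ and $L^{(2)}$ have coefficients supported in the compact set $K_w$, which lies below height $b<A$.

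We then check that $L^{(1)}$ is relatively bounded with respect to $\Delta_A$, using Lemma \ref{intreg} on a neighbourhood of $K_w$ together with Lemma \ref{interpol} to control first derivatives. The second-order term $L^{(2)}$ is bounded. Equivalently, the quadratic forms $q_\epsilon$ form a holomorphic family of type (B) in Kato's sense on the fixed form domain $H_A\cap H^1(X)$. Kato's theory then shows that the spectrum moves continuously in $\epsilon$, and that
\[ R_{\epsilon,A}(s)=R_{0,A}(s)\bigl(I+(\epsilon L^{(1)}+\epsilon^2L^{(2)})R_{0,A}(s)\bigr)^{-1}. \]
This Neumann series converges for $|\epsilon|$ small, uniformly for $s\in D$, because $\|L^{(1)}R_{0,A}(s)\|$ is uniformly bounded on $D$ by Step 1.

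From this representation we conclude that $R_{\epsilon,A}(s)$ is holomorphic in $s\in D$ and real-analytic in $\epsilon$ near $0$, indeed jointly analytic.

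\emph{Step 3 (the derivative).} Differentiating the series at $\epsilon=0$ gives
\[ \partial_\epsilon R_{\epsilon,A}(s)\big|_{\epsilon=0}=-R_{0,A}(s)\,L^{(1)}R_{0,A}(s). \]
This is holomorphic on $D$, since it is a composition of holomorphic bounded-operator-valued functions.

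The main obstacle is Step 2, specifically the uniform relative boundedness of the first-order operator $L^{(1)}$ with respect to the pseudo-Laplacian, which is only defined through the Friedrichs extension. We handle it by localising: $L^{(1)}$ sees only a compact region inside $y<A$, where $L_{\epsilon,A}$ acts as the honest differential operator $L_\epsilon$. There the interior regularity estimate (Lemma \ref{intreg}) bounds $\|R_{0,A}(s)h\|_{H^2}$, and hence the $H^1$ norm of $R_{0,A}(s)h$ on $K_w$, by $\|h\|_{L^2}$, uniformly for $s\in D$.
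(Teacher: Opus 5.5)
Your proposal is correct and follows essentially the same route as the paper. The paper cites the proof of Petridis's Lemma 4.1 for the fact that, for $A$ large, $L_{0,A}$ has no eigenvalue $s(1-s)$ with $s$ near $s_0$; you reprove this via Colin de Verdi\`ere's description of the pseudo-Laplacian spectrum and the growth of $A^{2s-1}$. The paper then passes to small $|\epsilon|$ using the real-analyticity of $L_{\epsilon,A}$ and the discreteness of its spectrum, and your Neumann series $R_{\epsilon,A}(s)=R_{0,A}(s)\bigl(I+(\epsilon L^{(1)}+\epsilon^2L^{(2)})R_{0,A}(s)\bigr)^{-1}$, with $L^{(1)}$ supported in $K_w\subset\{y<A\}$, makes that step and the formula $\partial_\epsilon R_{\epsilon,A}(s)|_{\epsilon=0}=-R_{0,A}(s)L^{(1)}R_{0,A}(s)$ explicit.
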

\begin{proof}  The proof of \cite[Lemma 4.1]{P} shows that under the given conditions, 
$A$ may be chosen large enough for $L_{0,A}$ to have no eigenvalue $s(1-s)$ with $s$ in a neighbourhood of $s_0$.  This also holds for $L_{\epsilon,A}$, for $|\epsilon|$ sufficiently small because $L_{\epsilon,A}$ is real-analytic and its spectrum is discrete (since it has compact resolvent).  Hence $R_{\epsilon,A}(s)$
is holomorphic near $s_0$ and real-analytic near $\epsilon=0$. This implies that $\partial_\epsilon R_{\epsilon,A}(s)|_{\epsilon=0}$ is also
holomorphic near $s_0$.
\end{proof}

\subsection{A function induced by $P_m(z, s)\coloneqq y^{1/2}I_{s-\frac12}(2\pi |m|y)e^{2\pi imx}.$ }\label{function}
We will construct a function which will turn out to coincide with a known Poincare-like series. In each of the following subsections we will define an intermediate function and we will determine the location of its potential poles and its bound. All this information will be needed for the main theorem.

We follow the method of \cite[Section 2]{P}. For $a>b+1$, let
$h_a\in C^{\infty}(\mathbb R_{>0})$ such that $$h_a(y)=0, \, \, \text{for $y \leq a$ and $h_a(y)=1$ for $y\geq a+1$.}$$
Define $p_{m, s}$ on $ \Gamma_\infty\backslash\{x+iy; \ y>b+1\}$ by
$$p_{m, s}(x+iy)=h_a(y)P_m(z, s)=h_a(y)y^{1/2}I_{s-\frac12}(2\pi |m|y)e^{2\pi imx}.$$
By the periodicity of $e^{2\pi imx}$ this formula is well-defined close to $\infty$ in $X$. We extend by zero to the rest of $X$.
Since $h_a(y)=0$ for $y\leq a$ and $a>b+1$ this extension is smooth.

The choice of $P_m(z,s)$ is motivated by the analogy with the construction of \cite{P}. There, the objective was to study a perturbation of the Eisenstein series which has Fourier coefficients involving $S^*(m, 0; c)$. The construction was built on the ``seed" $y^s$ of the Eisenstein series. In our case, we would like a perturbed family of an analogous series with Fourier coefficients involving $S^*(m, n; c)$ for $n \neq 0$. A natural choice is the Poincare series, but, as explained in \cite{GL}, a variation of the Poincare series which is twisted by Bessel functions works better. This series has $P_m(z,s)$ as its ``seed" and we apply the method of \cite{P} to that function.

\subsubsection{$H_m(z, s)$} Since $W=0$ in $\supp(p_{m,s})$, we have $L_\epsilon p_{m,s}=\Delta p_{m,s}.$ In view of this, we define
\begin{equation}\label{H}H_{m}(\cdot,s)\coloneqq -(L_\epsilon-s(1-s))p_{m,s}=-(\Delta-s(1-s))p_{m,s}=y^2(2 h'_a \partial_y P_{m, s}+h''_a P_{m, s}).\end{equation}
This function is compactly supported on $X$ because for $y\geq a+1$, $L_\epsilon=\Delta$, $h_a(y)=1$ and, by a direct computation, $p_{m, s}$ is an eigenfunction of $\Delta$ with eigenvalue $s(1-s)$.  In addition, $p_{m,s}=0$ for $y \le a$. Hence $H_{m}(\cdot,s)\in L^2(X)$ since it is supported in $a<y<a+1$. In particular, $H_{m}(\cdot,s)=0$ for $y>A,$ and hence it belongs to $H_A$ of \eqref{HA}. 

To bound $\|H_m(\cdot, s)\|_{L^{2}(X)}$, we combine the bound $I_{s-\frac12}(x) \ll (x/2)^{\sigma-\frac12}/|\Gamma(s+\frac12)|$ with Stirling, to deduce that
$P_m(z, s) \ll_{m, a} e^{\pi |t|/2}(1+|t|)^{-1/2}$, uniformly for $\sigma$ in a closed interval in $(\frac12, 1)$. Further, with $I'_{s-1/2}(x)=I_{s+1/2}(x)+(s-1/2)I_{s-1/2}(x)/x$, we deduce $\partial_y P_m(z, s) \ll_{m, a} e^{\pi |t|/2}(1+|t|)^{1/2}$. With \eqref{H}, we deduce
\begin{equation}\label{boundH}\| H_{m}(\cdot,s)\|_{L^2(X)} \ll_{m, a} e^{\pi |t|/2}(1+|t|)^{1/2}.\end{equation}

\subsubsection{$R_{0, A}(s)H_{m}(\cdot,s)$}\label{RH} Since $R_{\epsilon,A}(s)$ meromorphically maps $H_A$ to itself, we deduce that, away from poles, \begin{equation}\label{R}R_{\epsilon,A}(s)H_{m}(\cdot,s)\in H_A \subset L^2(X).\end{equation}

Assume that $s_0$ with $\Re s_0>1/2$ is such that $s_0(1-s_0)$ is not a cuspidal eigenvalue of $\Delta$ and that $s_0$ is not a pole of $\phi(s)$. Since $H_{m}(\cdot,s)$ is holomorphic in $s$, Lemma \ref{L4.1} implies that, for some $A$ large enough, $R_{0,A}(s)H_{m}(\cdot,s)$ and $\partial_{\epsilon} R_{\epsilon,A}(s)H_{m}(\cdot,s) |_{\epsilon=0}$ are holomorphic in a neighborhood $D$ of $s_0$.

We will bound $R_{0, A}(s)H_{m}(\cdot,s)$ in terms of the $L^2$ and the $H^2$ norms. By the self-adjointness of $L_{0, A}$, the norm of its resolvent $R_{0, A}$ satisfies
$\| R_{0, A}\| \le |\Im(s(1-s))|^{-1} \ll 1/|t(2\sigma-1)|$ and hence, with \eqref{boundH}
\begin{equation}\label{boundRHL}\|R_{0, A}(s)H_{m}(\cdot,s)\|_{L^2(X)} \ll_m e^{\pi |t|/2}(1+|t|)^{-1/2} (2\sigma-1)^{-1}.\end{equation}
Let $K \Subset \{y<a\}$ or $K \Subset \{a+1<y<A\}.$ Then, in $K$, $H_m(\cdot, s)=0$ and $L_{0, A}=L_0=\Delta$. Hence the equation $(L_{0, A}-s(1-s))(R_{0, A}(s)H_{m}(\cdot,s))=H_m(\cdot, s)$ becomes
$$(\Delta-s(1-s))(R_{0, A}(s)H_{m}(\cdot,s))=0.$$ By Lemma \ref{intreg}, followed by an application of \eqref{boundRHL} this implies that
\begin{equation}\label{boundRHH}\|R_{0, A}(s)H_{m}(\cdot,s)\|_{H^2(K)} \ll_{m, K} (1+|s(1-s))\|R_{0, A}(s)H_{m}(\cdot,s)\|_{L^2(X)} \ll_{m, K} e^{\frac{\pi |t|}{2}}|t|^{\frac32} (2\sigma-1)^{-1}.
\end{equation}

\subsubsection{$F_{m, \epsilon,A}(z,s)$}
We now set
\begin{equation}\label{F}F_{m, \epsilon,A}(z,s)
        \coloneqq p_{m,s}(z)+R_{\epsilon, A}(s)H_{m}(\cdot, s)(z).\end{equation}
By the meromorphicity of $p_{m, s}, H_m(\cdot, s), R_{\epsilon, A}$ and the real-analyticity of $R_{\epsilon, A}$,  
we see that $F_{m, \epsilon,A}(z,s)$ is meromorphic in $s$ and real-analytic in
$\epsilon$.  Further, as mentioned above, $L_{\epsilon,A}$ agrees with $L_\epsilon$ for $y<A$, and hence, on such a region, 
$$  (L_\epsilon-s(1-s))F_{m, \epsilon ,A}
        =
        (L_\epsilon-s(1-s))p_{m,s}
        +(L_\epsilon-s(1-s))R_{\epsilon, A}(s)H_{m}(\cdot, s)  \\
        =
        -H_{m}+H_{m}=0.$$
This vanishing does not extend to all of $X$, essentially because, by working on $H_A$ we have ignored the $0$-th Fourier coefficient for $y>A$. We will therefore, modify $F_{m, \epsilon, A}$ so that we obtain a function that does satisfy the vanishing on $X$, while maintaining the other conditions of $F_{m, \epsilon, A}$ we require.

\subsubsection{$\alpha_{m,\epsilon}(s)$, $\beta_{m,\epsilon}(s)$}
First consider the zero Fourier coefficient
\begin{equation}\label{0thterm}F_{0,m,\epsilon, A}(y,s)
    \coloneqq         \int_0^1F_{m,\epsilon,A}(x+iy,s)\,dx= \int_0^1R_{\epsilon, A}(s)H_{m}(\cdot, s)(x+iy)\,dx\end{equation}
(the last equality follows because $p_{m,s}(z)$ is a multiple of $e^{2 \pi i m x}$ for $m \neq 0$.) For $b<y<A$, it is easy to see that $F_{0,m,\epsilon, A}(y,s)$ satisfies the differential equation $-y^2G''(y)= s(1-s) G(y).$
Hence, for $s\neq \frac12$, there exist meromorphic functions $\alpha_{m,\epsilon}(s)$, $\beta_{m,\epsilon}(s)$ such that
\begin{equation}\label{F0} F_{0,m,\epsilon,A}(y,s)
        =
        \alpha_{m,\epsilon}(s)y^s
        +
        \beta_{m,\epsilon}(s)y^{1-s},
        \qquad b<y<A. \end{equation}

We now verify the holomorphicity of $\alpha_{m, 0}(s), \beta_{m, 0}(s), \partial_{\epsilon} \alpha_{m, \epsilon}(s) |_{\epsilon=0}, \partial_{\epsilon} \beta_{m, \epsilon}(s) |_{\epsilon=0}$ in a neighborhood of $s_0$ such that $\Re s_0>1/2$, $s_0(1-s_0)$ is not a cuspidal eigenvalue of $\Delta$ and $s_0$ is not a pole of $\phi(s)$. Indeed, if $y_1 \neq y_2 \in(b,A)$, $\alpha_{m,\epsilon}(s), \beta_{m,\epsilon}(s)$  can be expressed as linear combinations of $F_{0,m, \epsilon,A}(y_1,s)$, $F_{0,m, \epsilon,A}(y_2,s)$ upon solving a system with determinant $y_1^s y_2^{1-s}-y_2^s y_1^{1-s} \neq 0$. By Lemma \ref{L4.1}, there is a $A$ large enough and a neighborhood $D$ of $s_0$ such that $R_{\epsilon,A}(s)$ is holomorphic in $s \in D$ and real-analytic in a neighborhood of $\epsilon=0$. This and Lemma \ref{intreg} imply that $F_{m,\epsilon,A}(z,s)$ is holomorphic as $H^2$-function. The same holds for $F_{0,m, \epsilon,A}$ and, by the Sobolev embedding $H^2 \hookrightarrow C^1$, we deduce that it is holomorphic. Thus $\alpha_{m,\epsilon}(s), \beta_{m,\epsilon}(s)$ are holomorphic in $s\in D$ and real-analytic near $\epsilon$.
This, further implies that $\partial_{\epsilon}\alpha_{m,\epsilon}(s) |_{\epsilon=0}, \partial_{\epsilon}\beta_{m,\epsilon}(s) |_{\epsilon=0}$ are holomorphic in $s\in D$.

We next bound $|\alpha_{m, 0}(s)|, |\beta_{m, 0}(s)|$. We consider an interval $I \Subset (a+1, A)$. Then, from \eqref{boundRHH}, we obtain
\begin{equation}\label{RH1}
\|R_{0, A}(s)H_{m}(\cdot, s)\|_{H^{2}([0, 1] \times I)} \ll e^{\pi |t|/2}(1+|t|)^{3/2} (2\sigma-1)^{-1}.
\end{equation} Combining this with \eqref{boundRHL} and Lemma \ref{interpol},
we deduce that
$$\|R_{0, A}(s)H_{m}(\cdot, s)\|_{H^{1}([0, 1] \times I)} \ll e^{\pi |t|/2}(1+|t|)^{1/2} (2\sigma-1)^{-1}.$$ Using \eqref{0thterm}, this and \eqref{RH1} imply
$$\|F_{0,m,0,A}(\cdot ,s)\|_{H^{1}(I)} \ll e^{\pi |t|/2}(1+|t|)^{1/2} (2\sigma-1)^{-1} \qquad  \text{and} $$
$$\|F_{0,m,0,A}(\cdot ,s)\|_{H^{2}(I)} \ll e^{\pi |t|/2}(1+|t|)^{3/2} (2\sigma-1)^{-1}.$$
With Sobolev embeddings  $H^1(I)\hookrightarrow C^0(\overline I)$ and $H^2(I)\hookrightarrow C^1(\overline I)$, we deduce that, for $y_0 \in I,$
\begin{equation}\label{boundF0}|F_{0,m,0,A}(y_0,s)| \ll e^{\pi |t|/2}(1+|t|)^{1/2} (2\sigma-1)^{-1}, \, \, \text{and} \, |F'_{0,m, 0,A}(y_0,s)| \ll e^{\pi |t|/2}(1+|t|)^{3/2} (2\sigma-1)^{-1}.\end{equation} 
From \eqref{F0} and the corresponding equation for $F'_{0, m, 0, A}(y, s)$ we obtain a system in $\alpha_{m, 0}(s)$ and $\beta_{m, 0}(s)$ with determinant $1-2s$ (and hence $|1-2s| \sim 1+|t|,$ for $|t|>1$). Solving it we deduce, with \eqref{boundF0}, 
\begin{equation}\label{boundalpha}|\alpha_{m,0}(s)|+|\beta_{m, 0}(s)| \ll e^{\pi |t|/2}(1+|t|)^{1/2} (2\sigma-1)^{-1}.\end{equation} 

\subsubsection{$\hat F_{m,\epsilon,A}(z,s)$}
Since $R_{\epsilon, A}(s)H_{m}(\cdot, s) \in H_A$, its zero Fourier
coefficient vanishes for $y>A$.  Since $p_{m,s}$ has no zero
Fourier coefficient, the zero Fourier coefficient of
$F_{m,\epsilon,A}$ is zero above height $A$.
Set
\begin{equation}\label{hat}\hat F_{m,\epsilon,A}(z,s)\coloneqq 
        F_{m,\epsilon,A}(z,s)
        +
        \mathbf 1_{[A,\infty)}(y)
        \left(
        \alpha_{m,\epsilon}(s)y^s
        +
        \beta_{m,\epsilon}(s)y^{1-s}
        \right)\end{equation}
in $[b,\infty)\times\mathbb R/\mathbb Z$, and $\hat F_{m,\epsilon,A}(z,s)\coloneqq F_{m,\epsilon,A}(z, s)$ elsewhere. 
The nonzero Fourier coefficients of $\hat F_{m,\epsilon,A}$ are equal to those of $F_{m,\epsilon,A}$.  On the other hand, the zero Fourier coefficient of
$F_{m,\epsilon,A}$ is zero for $y > A$, while, for $y<A$, it is $\alpha_{m,\epsilon}(s)y^s+\beta_{m,\epsilon}(s)y^{1-s}$. Therefore, adding that term when $y\ge A$ ensures that the  zero Fourier coefficient of $\hat F_{m, \epsilon, A}$ is smooth across $y=A$. Since, in addition, each term in the Fourier expansion of $\hat F_{m, \epsilon, A}$ satisfies $(L_\epsilon-s(1-s))g=0$, we have, distributionally in all of $X$,
\begin{equation}\label{eigenhat}        (L_\epsilon-s(1-s))\hat F_{m,\epsilon,A}=0.  \end{equation}
By Lemma \ref{intreg}, we obtain $U_{\epsilon}\hat F_{m,\epsilon,A} \in H^2$ locally.  The extension of Lemma \ref{intreg} to higher orders implies $U_{\epsilon}\hat F_{m,\epsilon,A} \in H^r$ for every $r$.  Hence, by Sobolev embedding, this function (and thus $\widehat F_{m,\epsilon,A}$) is smooth. Therefore, \eqref{eigenhat} holds classically.

\subsubsection{$D_{\epsilon}(z,s)$}
We next adjust $\hat F_{m, \epsilon, A}$ by a $y^s$-factor to ensure square-integrability.  Lemma 2.1 of \cite{P} gives, for $s$ in the resolvent set, a function $D_\epsilon(z,s)$ satisfying
\begin{equation}\label{Depsilon}(L_\epsilon-s(1-s))D_\epsilon(z,s)=0, \, \, \text{and $D_\epsilon(z,s)-h_{b+1}(y)y^s\in L^2(X).$}\end{equation} (However, note the different sign convention for $\Delta$ in \cite{P}, leading to $+s(1-s)$ instead of $-s(1-s)$.)
Further, with \cite[(2.21)-(2.23)]{P}, $D_\epsilon(z,s)$ is meromorphically continued to the $s$-plane, real-analytically in $\epsilon$.  Since $h_{b+1}-h_a$ is compactly supported,
$D_\epsilon(z,s)-h_a(y)y^s\in L^2(X).$

We bound $D_0(\cdot, s)$. Set
$H(z, s)=-(\Delta-s(1-s))(h_a(y)y^s).$ By the proof of \cite[Lemma 2.1]{P} we have $D_0(z, s)=h_a(y)y^s+R_0(s)H(z, s)$ and hence on $K_w$, $D_0(z, s)=R_0(s)H(z, s)$. 

Since, as in \eqref{H}, $H(z, s)=y^2(2h'_a(y) \partial_y y^s+h''_a(y) y^s)$, we deduce $\|H( \cdot, s)\|_{L^2(X)} \ll |t|.$
By the bound $\|R_0(s)\| \ll (|t|(2\sigma-1))^{-1},$ we deduce
$\|R_0(s)H(\cdot, s)\|_{L^2(X)} \ll (2\sigma-1)^{-1}$.
Therefore, with Lemma \ref{intreg},  
we obtain
\begin{equation}\label{boundD}\|D_0(\cdot, s)\|_{H^1(K_w)} \le \|D_0(\cdot, s)\|_{H^2(K_w)}  \ll (|t|+1)^2 \|R_0(s)H(\cdot, s)\|_{L^2(X)}\ll (|t|+1)^2(2\sigma-1)^{-1}.  \end{equation}

\subsubsection{$\tilde F_{m,\epsilon}(z,s)$}
The $0$-th Fourier coefficient of $D_{\epsilon}$ is $y^s+\phi_{\epsilon}(s)y^{1-s}$, for a scattering matrix $\phi_{\epsilon}(s)$ (see \cite[(5.4)]{P}).
Therefore, by \eqref{eigenhat} and \eqref{Depsilon},
\begin{equation}\label{tilde}\tilde F_{m,\epsilon}(z,s)
        \coloneqq 
        \hat F_{m,\epsilon,A}(z,s)
        -
        \alpha_{m,\epsilon}(s)D_\epsilon(z,s)\end{equation}
satisfies $(L_\epsilon-s(1-s))\tilde F_{m,\epsilon}=0$ and its zero Fourier coefficient at infinity for $y > A$ is $\left(
        \beta_{m,\epsilon}(s)
        -
        \alpha_{m,\epsilon}(s)\varphi_\epsilon(s)
        \right)y^{1-s}$
which is square-integrable for $\Re s>\frac12$.  With \eqref{R} and \eqref{Depsilon} we have, for $\Re s>\frac12$, 
\begin{multline}\label{tildeFL2}
        \tilde F_{m,\epsilon}(z,s)-p_{m,s}(z) 
=R_{\epsilon, A}(s)H_{m}(\cdot, s)(z)- \alpha_{m,\epsilon}(s) (D_{\epsilon}(z, s)-h_a(y)y^s)\\
+\left ( \mathbf 1_{[A,\infty)}(y)
        \left(
        \alpha_{m,\epsilon}(s)y^s
        +
        \beta_{m,\epsilon}(s)y^{1-s}\right )-  \alpha_{m,\epsilon}(s) h_a(y)y^s \right )
\in L^2(X).
\end{multline}

To bound $\tilde F_{m, 0}$, we first note that in $K_w$, we have $p_{m, s}(y)=0$ and $\mathbf 1_{[A, \infty)}(y)=0$, so
$\tilde F_{m, 0}(z, s)=R_0(s)H_m(z, s)- \alpha_{m , 0}(s) D_0(z, s).$ Therefore, with \eqref{boundRHH}, \eqref{boundalpha} and \eqref{boundD}, we deduce
\begin{equation}\label{boundtilde}\|\tilde F_{m, 0}(\cdot ,s) \|_{H^1(K_w)} \ll e^{\pi |t|/2}(1+|t|)^{5/2}(2 \sigma-1)^{-2}. \end{equation}

\subsubsection{$U_\epsilon\tilde F_{m,\epsilon}(z,s)$}
By construction, $U_\epsilon\tilde F_{m,\epsilon}(z,s)$ is $\bar \chi_\epsilon$-automorphic, satisfies
\begin{equation}\label{PLem2.1}(\Delta-s(1-s))U_\epsilon\tilde F_{m,\epsilon}=0 \quad \text{and $U_\epsilon\tilde F_{m,\epsilon}-y^{1/2}I_{s-\frac12}(2\pi |m|y)e^{2\pi imx} \in L^2(X).$}\end{equation}
The last containment follows from \eqref{tildeFL2} because $p_{m, s}(z)=y^{1/2}I_{s-\frac12}(2\pi |m|y)e^{2\pi imx}$ and $U_{\epsilon}=1$ when $y \ge a+1$.

We will bound $F'_{m}\coloneqq \partial_{\epsilon} \tilde F_{m, \epsilon}|_{\epsilon=0}.$ From \eqref{PLem2.1} and the independence of $p_{m, s}$ from $\epsilon$, we have $F'_{m} \in L^2(X)$ and $(L_{\epsilon}-s(1-s))\tilde F_{m ,\epsilon}=0$. Upon differentiation, we deduce
\begin{equation}\label{Delta'}(\Delta-s(1-s))F'_m=-\partial_{\epsilon}L_{\epsilon}|_{\epsilon=0} \tilde F_{m, 0}. \end{equation}
By definition, $\partial_{\epsilon}L_{\epsilon}|_{\epsilon=0}$ is a first order differential operator and is supported in $K_w$. Hence, with \eqref{boundtilde}
$$\|\partial_{\epsilon}L_{\epsilon}|_{\epsilon=0}  \tilde F_{m, 0} \|_{L^2(X)} \ll \| \tilde F_{m, 0}\|_{H^1(K_w)} \ll e^{\pi |t|/2}(1+|t|)^{5/2}(2 \sigma-1)^{-2}.$$
Since $\partial_{\epsilon}\tilde F_{m, \epsilon}|_{\epsilon=0} \in L^2(X)$, we can use \eqref{Delta'} and $\|R_0(s)\| \ll (|t|(2\sigma-1))^{-1}$ to obtain, away from the poles of the resolvent,
\begin{equation}\label{boundF} \|F'_m \|_{L^2(X)}=  \|-R_0(s)\partial_{\epsilon} L_{\epsilon} |_{\epsilon=0} \tilde F_{m, 0} \|_{L^2(X)} \ll e^{\pi |t|/2}(1+|t|)^{3/2}(2 \sigma-1)^{-3}.\end{equation}

\subsection{The proof of the meromorphic continuation of $Z^*_{m, n}(s)$}
For $\Re s>1$, Niebur's Poincare series 
$$F_m(z,s,\chi_\epsilon)
        =
        \sum_{\gamma\in\Gamma_\infty\backslash\Gamma}
        \chi_\epsilon(\gamma)
        \Im(\gamma z)^{1/2}
        I_{s-\frac12}(2\pi |m|\Im(\gamma z))
        e^{2\pi im\Re(\gamma z)}
$$
is absolutely convergent and satisfies the conditions \eqref{PLem2.1} (see \cite[Section 3]{F}, but note the different normalisation employed there).  Therefore, $U_\epsilon\tilde F_{m,\epsilon}(z,s)-F_m(z,s,\chi_\epsilon)$
is an $L^2$-eigenfunction of $\Delta$ with eigenvalue $s(1-s)$.  Since $\Re s>1$, by \cite[Satz 5.5]{Roe} this eigenvalue is not in the
$L^2$-spectrum.  Hence, the difference vanishes and we have
    \begin{equation}\label{key}    U_\epsilon\tilde F_{m,\epsilon}(z,s)= F_m(z,s,\chi_\epsilon),
        \qquad \text{for $\Re s>1$}.\end{equation}
It follows that the nonzero Fourier coefficients of $F_m(z,s,\chi_\epsilon)$
are, for $\Re(s)>1$, meromorphic in
$s$ and real-analytic in $\epsilon$, away from a polar set.  Therefore, \eqref{key} gives the meromorphic continuation in $s$ and real-analytic dependence on $\epsilon$
of these Fourier coefficients. 

In \cite[Theorem 3.4]{F}, the Fourier expansion of $F_m(z,s,\chi_\epsilon),$ is computed. In the specific case we require ($m, n>0, k=\kappa=0, \Gamma=\Gamma_0(N)$), the coefficient of $e^{-2 \pi i n x}$ in this expansion is 
\begin{equation}\label{FourierFm}\phi_{m, n}(s; \chi_{\epsilon}, y)\coloneqq 2\sqrt{y}\sum_{N|c>0} \frac{S_{\chi_{\epsilon}}(m, n; c)}{c}I_{2s-1} \left ( \frac{4 \pi \sqrt{mn}}{c}\right )K_{s-\frac12}(2 \pi n y).\end{equation}
Therefore, since the only zeros in $w$ of $K_w(x)$ for a fixed $x>0$ lie on the imaginary axis, we can divide, for $\Re(s)>\frac12$,  this Fourier coefficient with $2 \sqrt{y}K_{s-\frac12}(2 \pi n y)$ to deduce
\begin{prop}\label{realanal} The function 
$$\sum_{N|c>0} \frac{S_{\chi_{\epsilon}}(m, n; c)}{c}I_{2s-1} \left ( \frac{4 \pi \sqrt{mn}}{c}\right )$$
 has a meromorphic continuation to $\Re(s)>\frac12$ which is real-analytic in $\epsilon$.
\end{prop}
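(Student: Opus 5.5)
The plan is to read off the continuation from the identity \eqref{key} and the Fourier expansion \eqref{FourierFm}. The function $U_\epsilon\tilde F_{m,\epsilon}(z,s)$ is defined for all $s$ with $\Re s>\frac12$ away from a discrete polar set. It is meromorphic in $s$ and real-analytic in $\epsilon$ near $0$, because it is built from $p_{m,s}$, $R_{\epsilon,A}(s)H_m$, $\alpha_{m,\epsilon}$, $\beta_{m,\epsilon}$ and $D_\epsilon$. Its $n$-th Fourier coefficient at $\infty$,
\[
\int_0^1 U_\epsilon\tilde F_{m,\epsilon}(x+iy,s)e^{2\pi i n x}\,dx ,
\]
therefore inherits these properties. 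For $y\ge a+1$ and $y>A$ we have $U_\epsilon=1$, and there $\tilde F_{m,\epsilon}$ differs from $p_{m,s}$ only by $R_{\epsilon,A}(s)H_m$, the zero-mode terms, and $-\alpha_{m,\epsilon}D_\epsilon$. So the coefficient can be written explicitly through these constructed objects, and real-analyticity and meromorphy pass to it by integrating over $x\in[0,1]$. To get pointwise values from $L^2$/$H^2$ information, I will use interior regularity (Lemma \ref{intreg}) and Sobolev embedding, as was done for $F_{0,m,\epsilon,A}$.

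For $\Re s>1$, \eqref{key} says this coefficient equals $\phi_{m,n}(s;\chi_\epsilon,y)$ in \eqref{FourierFm}. Fix $y$ large, say $y_0>A$. Since $K_{s-\frac12}(2\pi n y_0)$ has no zeros for $\Re s>\frac12$, I divide by $2\sqrt{y_0}K_{s-\frac12}(2\pi n y_0)$. This quotient is holomorphic and nonvanishing in $s$ and independent of $\epsilon$. The series in the proposition then equals, for $\Re s>1$, a function that is meromorphic on $\Re s>\frac12$ and real-analytic in $\epsilon$. This is the desired continuation; by the identity theorem it is independent of the choice of $y_0$.

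The main obstacle is making the joint regularity precise. Lemma \ref{L4.1} only gives holomorphy and real-analyticity locally near each $s_0$ that is not a cuspidal eigenvalue or a pole of $\phi$, with $A$ depending on $s_0$. I will handle this by working locally. Near each $s_0$, choose $A$ as in Lemma \ref{L4.1}. Since \eqref{key} forces $U_\epsilon\tilde F_{m,\epsilon}$ to be independent of $A$ on $\Re s>1$, the local continuations agree on overlaps by analytic continuation and so patch together. At the finitely many exceptional points in any compact set, meromorphy of the resolvent $R_{\epsilon,A}(s)$ and of $D_\epsilon$ (\cite[(2.21)--(2.23)]{P}) gives at worst poles, which yields the meromorphic continuation claimed.
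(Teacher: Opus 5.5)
Your proposal is correct and follows the paper's argument. The meromorphy in $s$ and real-analyticity in $\epsilon$ of the $n$-th Fourier coefficient of $U_\epsilon\tilde F_{m,\epsilon}$ pass, via \eqref{key}, to the coefficient \eqref{FourierFm} of Niebur's series, which is then divided by the nonvanishing factor $2\sqrt{y}K_{s-\frac12}(2\pi n y)$ for $\Re(s)>\frac12$. Your patching of the local constructions over different choices of $A$ fills in a detail the paper leaves implicit; it is not a different route.
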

We will now express this Fourier coefficient in terms of $Z_{m, n}(s; \chi_\epsilon)$. 
By the series expression of the $I$-Bessel function, we have, for $\Re(s) \gg 1$,
\begin{equation*} \phi_{m, n}(s; \chi_\epsilon)\coloneqq \sum_{N|c>0} \frac{S_{\chi_{\epsilon}}(m, n; c)}{c}I_{2s-1} \left ( \frac{4 \pi \sqrt{mn}}{c}\right )=
\sum_{\ell \ge 0} \frac{(2 \pi \sqrt{mn})^{2s+2\ell-1}}{\ell! \Gamma(2s+\ell)} Z_{m, n }(s+\ell, \chi_\epsilon). 
\end{equation*}
Hence
\begin{equation}\label{phimn} Z_{m, n }(s, \chi_\epsilon)=\frac{\Gamma(2s)}{(2 \pi \sqrt{mn})^{2s-1}}\left ( \phi_{m, n}(s; \chi_\epsilon)-
\sum_{\ell \ge 1} \frac{(2 \pi \sqrt{mn})^{2s+2\ell-1}}{\ell! \Gamma(2s+\ell)} Z_{m, n }(s+\ell, \chi_\epsilon) \right ). 
\end{equation}
\begin{lem}\label{tail} The function 
$$\sum_{\ell \ge 1}\frac{(2 \pi \sqrt{mn})^{2\ell}\Gamma(2s)}{\ell! \Gamma(2s+\ell)} Z_{m, n }(s+\ell, \chi_\epsilon)$$ is real-analytic in $\epsilon$ and holomorphic in $\Re(s)>\frac12$. 
\end{lem}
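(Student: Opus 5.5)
The plan is to bound each term of the series uniformly on compact subsets $K$ of $\{\Re(s)>\frac12\}$ and for $\epsilon$ in a fixed real interval. Then we invoke the Weierstrass M-test, in its complex and real-analytic forms, after extending $\epsilon$ to a small complex neighbourhood.

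\emph{Bounding the zeta factors.} The key observation is that for $\ell\ge1$ and $\Re(s)>\frac12$ we have $\Re(s+\ell)>\frac32>1$. So every $Z_{m,n}(s+\ell,\chi_\epsilon)$ lies in the region of absolute convergence. Since $\chi_\epsilon$ is unitary for real $\epsilon$, the trivial bound $|S_{\chi_\epsilon}(m,n;c)|\le c$ gives
$$|Z_{m,n}(s+\ell,\chi_\epsilon)|\le \sum_{N|c}c^{1-2\Re(s)-2\ell}\le \zeta(2\Re(s)+1)\ll_K 1,$$
uniformly in $\ell\ge1$ and $s\in K$. Each such term is holomorphic in $s$, since it is a uniformly convergent Dirichlet series.

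\emph{Real-analyticity in $\epsilon$.} Replace $\epsilon$ by $\epsilon+i\eta$ with $|\eta|\le\eta_0$. Then $|\chi_{\epsilon+i\eta}(\gamma)|=\exp\bigl(2\pi\eta\int_{i\infty}^{\gamma i\infty}w\bigr)$. By \eqref{boundmods} this is at most $\exp(C\eta_0\log c)=c^{C\eta_0}$. Choosing $\eta_0$ small enough that $C\eta_0<\frac12$, the series for $Z_{m,n}(s+\ell,\chi_{\epsilon+i\eta})$ still converges absolutely, with bound $\sum_c c^{1+C\eta_0-2\Re(s)-2\ell}\ll_K 1$, uniformly in $\ell\ge1$. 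Each term $\chi_{\epsilon}(\gamma)$ is entire in $\epsilon$. Hence $Z_{m,n}(s+\ell,\chi_\epsilon)$ extends holomorphically to a complex strip around the real $\epsilon$-axis, jointly in $(s,\epsilon)$, with uniform bounds.

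\emph{Gamma-factor coefficients.} Next, bound the coefficients. We have
$$\frac{\Gamma(2s)}{\Gamma(2s+\ell)}=\frac{1}{(2s)(2s+1)\cdots(2s+\ell-1)}.$$
For $\Re(s)>\frac12$, each factor has modulus at least $\Re(2s)+j\ge j+1$, so the ratio is bounded by $1/\ell!$. The $\ell$-th term is therefore $\ll_K (2\pi\sqrt{mn})^{2\ell}/(\ell!)^2$, which is summable.

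\emph{Conclusion.} The series converges absolutely and uniformly on $K\times\{|\Im\epsilon|\le\eta_0\}$, and each term is holomorphic in both $s$ and complex $\epsilon$. The sum is therefore holomorphic in $s$ for $\Re(s)>\frac12$, and holomorphic in $\epsilon$ near the real axis, hence real-analytic in real $\epsilon$.

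The only mildly delicate point is the complexification of $\epsilon$, since unitarity is lost off the real axis. It is handled by the logarithmic growth of modular symbols \eqref{boundmods}, which costs only $c^{C\eta_0}$ against the ample room $\Re(s+\ell)>\frac32$. Alternatively, one may avoid complexification by differentiating term by term: the $k$-th $\epsilon$-derivative introduces a factor $(2\pi\int w)^k\ll (C\log c)^k$, and summing gives Taylor bounds of the form $C^k k!$ that yield analyticity.
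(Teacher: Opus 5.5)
Your proof is correct and follows the paper's route: each $Z_{m,n}(s+\ell,\chi_\epsilon)$ converges absolutely because $\Re(s+\ell)>\frac32$, and the coefficients are summable (the paper applies the ratio test to $(2\pi\sqrt{mn})^{2\ell}/(\ell!\,|\Gamma(2s+\ell)|)$, while you use the explicit bound $|\Gamma(2s)/\Gamma(2s+\ell)|\le 1/\ell!$). Your complexification of $\epsilon$ via \eqref{boundmods} is a worthwhile addition, since the paper only invokes uniform convergence in real $\epsilon$, and that by itself does not give real-analyticity.
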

\begin{proof} For $\Re(s)>\frac12$ and $\ell \ge 1$, $\Re(2s+2\ell)>3$ and hence $Z_{m, n }(s+\ell, \chi_\epsilon)$ converges absolutely and uniformly in $\epsilon$. Therefore, for $s$ in a compact subset $K$ of $\Re(s)>\frac12$, the series is 
$$\ll_{K} \sum_{\ell \ge 1} \frac{(2 \pi \sqrt{mn})^{2 \ell}}{\ell! |\Gamma(2s+\ell)|}.$$ The ratio test shows that this converges absolutely, thus deducing the assertion.
\end{proof}
We are now ready to show the following.
\begin{thm}\label{meromcontin} 
Suppose that $m,n\ne0$.
\begin{enumerate}
\item\label{mero-i} The zeta function $Z^*_{m, n}(s)$ has meromorphic continuation to the region $\Re(s)>1/2$.
\item\label{mero-ii} The only potential poles in the strip $\Re(s) \in (1/2, 1)$ are those $s$ such that $s(1-s)$ is an exceptional eigenvalue of $\Delta$.
\item\label{mero-iii} For $s$ away from the poles such that $|t|>1$ and $\sigma>1/2$, we have
$$Z_{m, n}^*(s) \ll_{m, n, N, f, \varepsilon} \frac{|t|^{\frac72+\epsilon}}{(\sigma-\frac12)^3}.$$
\end{enumerate}
\end{thm}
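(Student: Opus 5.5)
Differentiating \eqref{phimn} in $\epsilon$ at $\epsilon=0$ and combining with \eqref{Z*-Zchi} will reduce everything to the $\epsilon$-derivatives of the two pieces of \eqref{phimn}. Writing $\phi'_{m,n}(s)\coloneqq \partial_\epsilon \phi_{m,n}(s;\chi_\epsilon)|_{\epsilon=0}$ (for each of $\chi^1,\chi^2$), we have
$$Z^*_{m,n}(s)=\frac{\Gamma(2s)}{(2\pi\sqrt{mn})^{2s-1}}\Big(\phi'^{\,1}_{m,n}(s)+i\phi'^{\,2}_{m,n}(s)\Big)-\sum_{\ell\ge1}\frac{(2\pi\sqrt{mn})^{2\ell}\Gamma(2s)}{\ell!\,\Gamma(2s+\ell)}\, Z^*_{m,n}(s+\ell).$$
This is justified for $\Re(s)\gg1$ by the uniform convergence noted before \eqref{Z*-Zchi}. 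By Lemma \ref{tail}, which gives holomorphy and real-analyticity in $\epsilon$ uniformly on compacta, the tail is holomorphic in $\Re(s)>\frac12$. Indeed, $Z^*_{m,n}(s+\ell)$ converges absolutely there by \eqref{boundS*}, and since $|Z^*_{m,n}(s+\ell)|\ll 1$ uniformly in $t$, Stirling applied to $\Gamma(2s)/\Gamma(2s+\ell)$ shows the tail is $O(1)$ for $|t|>1$. So part (i) follows from Proposition \ref{realanal}: the function $\phi_{m,n}(s;\chi_\epsilon)$ is meromorphic in $\Re(s)>\frac12$ and real-analytic in $\epsilon$, hence its $\epsilon$-derivative at $0$ is meromorphic there.

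For part (ii), I will locate the poles through the construction of Section \ref{function}. By \eqref{key} and \eqref{FourierFm}, $\phi_{m,n}(s;\chi_\epsilon)$ is the $n$-th Fourier coefficient of $U_\epsilon\tilde F_{m,\epsilon}(\cdot,s)$, divided by $2\sqrt y K_{s-\frac12}(2\pi ny)$. The latter has no zeros for $\Re(s)>\frac12$. Fix $s_0$ with $\Re s_0\in(\frac12,1)$ such that $s_0(1-s_0)$ is not an exceptional eigenvalue. For $\Gamma_0(N)$ the scattering matrix has no poles in $(\frac12,1)$ other than those from residual spectrum, of which there is none for congruence groups. Lemma \ref{L4.1} then gives $A$ and a neighbourhood $D$ of $s_0$ on which $R_{0,A}$ and $\partial_\epsilon R_{\epsilon,A}|_{\epsilon=0}$ are holomorphic; hence $\alpha$, $\beta$ and their $\epsilon$-derivatives are holomorphic on $D$. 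The remaining ingredient is $D_\epsilon$ and its $\epsilon$-derivative at $s_0$. This is controlled by \cite{P} away from cuspidal eigenvalues and scattering poles, which, since $\Re s_0>\frac12$, reduce to the exceptional ones. Consequently $\partial_\epsilon\tilde F_{m,\epsilon}|_{\epsilon=0}$, and therefore $\phi'_{m,n}$, is holomorphic near $s_0$.

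For part (iii), I take the $n$-th Fourier coefficient of $\partial_\epsilon(U_\epsilon\tilde F_{m,\epsilon})|_{\epsilon=0}=iW\tilde F_{m,0}+F'_m$ on a horocycle $y=y_0>a+1$. There $W=0$, so the coefficient is just that of $F'_m$. To turn the $L^2$ bound \eqref{boundF} into a pointwise bound at height $y_0$, I would argue as in the treatment of $\alpha,\beta$. On $\{a+1<y<A\}$ the function $F'_m$ satisfies $(\Delta-s(1-s))F'_m=0$, because $\partial_\epsilon L_\epsilon$ is supported in $K_w$. Interior regularity (Lemma \ref{intreg}) gives an $H^2$ bound costing a factor $|t|^2$, and interpolation (Lemma \ref{interpol}) then gives an $H^1$ bound costing only $|t|$. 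Restricting to a horizontal line by the trace and Sobolev embedding, the Fourier coefficient is therefore $\ll e^{\pi|t|/2}(1+|t|)^{5/2}(2\sigma-1)^{-3}$.

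It remains to multiply by the prefactors. Lemma \ref{boundK} gives $1/K_{s-\frac12}(2\pi n y_0)\ll |\Gamma(s-\frac12)|^{-1}\ll e^{\pi|t|/2}|t|^{1-\sigma}$, and Stirling gives $|\Gamma(2s)|\ll e^{-\pi|t|}|t|^{2\sigma-1/2}$. The exponentials cancel, and the remaining power of $|t|$ is about $\frac52+1-\sigma+2\sigma-\frac12$, which for $\sigma\le1$ is at most $\frac72$ (with an $\epsilon$ loss). For $\sigma>1$ the bound holds trivially by absolute convergence. The main obstacle is this bookkeeping: checking that the interpolation and trace step loses only $|t|^{1}$ relative to \eqref{boundF}, uniformly in $\sigma$ near $\frac12$. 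If that step loses more, I would instead use the $H^2$ bound directly and absorb the loss by tightening the estimates on $\tilde F_{m,0}$.
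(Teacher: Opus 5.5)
\paragraph{Parts (i) and (ii).} These are the paper's argument. You differentiate \eqref{phimn} at $\epsilon=0$ and control the tail by Lemma~\ref{tail}. You then reduce the holomorphy of $\partial_\epsilon\phi_{m,n}(s;\chi_\epsilon)|_{\epsilon=0}$ to that of $\alpha,\beta,\hat F_{m,\epsilon,A},D_\epsilon$ and their $\epsilon$-derivatives, via Lemma~\ref{L4.1} and \cite{P}. Your remark that $\phi(s)$ has no poles in $(\frac12,1)$ for $\Gamma_0(N)$ is a point the paper leaves implicit.

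\paragraph{Part (iii) has a gap.} Your own exponents add up to
\[
\tfrac52+(1-\sigma)+(2\sigma-\tfrac12)=3+\sigma .
\]
This exceeds $\frac72$ for every $\sigma>\frac12$ and equals $4$ at $\sigma=1$, so the claim ``at most $\frac72$ for $\sigma\le1$'' is false. Absolute convergence only takes over to the right of $\sigma=1$, so it does not cover the gap. The excess $|t|^{\sigma-\frac12}$ comes from your interior-regularity, interpolation and trace step, which costs a full factor $|t|$ beyond the $L^2$ bound \eqref{boundF}.

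There is a second, independent problem. Dividing by $K_{s-\frac12}(2\pi n y_0)$ at a single fixed height is not uniform as $\sigma\to\frac12^+$. The error term in Lemma~\ref{boundK} is $O(|t|^{-2(\sigma-\frac12)})$, which is not small when $(\sigma-\frac12)\log|t|$ is bounded. Also, $K_{it}(x)$ has infinitely many real zeros in $t$. So the estimate $|K_{s-\frac12}(2\pi n y_0)|^{-1}\ll e^{\pi|t|/2}|t|^{1-\sigma}$ cannot be used with constants compatible with the claimed $(\sigma-\frac12)^{-3}$ dependence.

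\paragraph{How the paper avoids both losses.} It never passes to a pointwise bound. For $y>Y>A+1$ one has $U_\epsilon=1$. By \eqref{key} and \eqref{FourierFm}, the $n$-th Fourier coefficient of $F'_m$ is then exactly $2\sqrt{y}\,K_{s-\frac12}(2\pi n y)\,\partial_\epsilon\phi_{m,n}(s;\chi_\epsilon)|_{\epsilon=0}$: a known function of $y$ times the unknown scalar. Parseval on $[0,1]\times[Y,Y+1]$ then gives
\[
\|F'_m\|_{L^2(X)}^2\ \ge\ 4\,\bigl|\partial_\epsilon\phi_{m,n}(s;\chi_\epsilon)|_{\epsilon=0}\bigr|^2\int_Y^{Y+1}\bigl|K_{s-\frac12}(2\pi n y)\bigr|^2\frac{dy}{y}\ \gg_n\ \bigl|\partial_\epsilon\phi_{m,n}(s;\chi_\epsilon)|_{\epsilon=0}\bigr|^2 e^{-\pi|t|}|t|^{-1}.
\]
The lower bound for the $y$-averaged Bessel integral (from the proof of \cite[Theorem 3.2]{IwS}) is insensitive to the zeros of $K$.

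Combined with \eqref{boundF}, this yields $|\partial_\epsilon\phi_{m,n}|\ll e^{\pi|t|}|t|^{2}(2\sigma-1)^{-3}$ using only the $L^2$ norm, with no derivative loss. Stirling for $\Gamma(2s)$ then gives the exponent $\frac32+2\sigma\le\frac72$ for $\sigma\le1$. You should replace your trace-and-divide step by this argument.
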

\begin{proof} We treat the case $m,n>0$ for convenience.  For other signs, we must use the absolute values of $m$, $n$ below,
and if $\sgn(mn)=-1$ then the corresponding Fourier coefficients of the
Niebur Poincare series involve the $J$-, rather than the $I$-Bessel
function.  The proof is otherwise identical.

For part~\ref{mero-i}, combining Proposition \ref{realanal} and Lemma  \ref{tail}, we deduce that $Z_{m, n}(s; \chi_\epsilon)$ is real-analytic in $\epsilon$ and meromorphic for $\Re(s)>1/2$. Therefore, with \eqref{Z*-Zchi}, we deduce the meromorphic continuation of  $Z^*_{m, n}(s)$ in $\Re(s)>1/2$.

For part~\ref{mero-ii}, to show the potential location of the poles, we first note that, by \eqref{phimn} and the holomorphicity established in Lemma \ref{tail}, it is sufficient to show the holomorphicity of $\partial_{\epsilon} \phi_{m, n}(s; \chi_\epsilon) |_{\epsilon=0}$ in a neighborhood of each $s_0$ such that $1>\Re s_0>1/2$, $s_0(1-s_0)$ is not a cuspidal eigenvalue of $\Delta$ and $s_0$ is not a pole of $\phi(s)$.  By the construction of $\phi_{m, n}(s; \chi_\epsilon)$ from the Fourier coefficients of $F_{m, \epsilon}$ and \eqref{key}, we deduce that this can be deduced from the holomorphicity of $\partial_{\epsilon} U_{\epsilon}\tilde F_{m, \epsilon} |_{\epsilon=0}$ in a neighborhood of $s_0$. A direct differentiation shows that this is a linear combination of  $\tilde F_{m, 0}$ and
$\partial_{\epsilon} \tilde F_{m, \epsilon} |_{\epsilon=0}$.

Now, by \eqref{tilde}, the holomorphicity of $\tilde F_{m, 0}$ and
$\partial_{\epsilon} \tilde F_{m, \epsilon} |_{\epsilon=0}$ near $s_0$ is reduced to the holomorphicity of 
$\hat F_{m, 0 ,A}(z, s),$ $\alpha_{m, 0}(s)$,  $\beta_{m, 0}(s),$ $D_0(z,s),$ $\partial_{\epsilon}\hat F_{m,\epsilon,A}(z, s)|_{\epsilon=0},$ 
$\partial_{\epsilon}\alpha_{m,\epsilon}(s)|_{\epsilon=0},$ $\partial_{\epsilon}\beta_{m,\epsilon}(s)|_{\epsilon=0}$ and $\partial_{\epsilon}D_\epsilon(z,s)|_{\epsilon=0}.
$
For $\alpha_{m, 0}(s),$  $\beta_{m, 0}(s)$, $\partial_{\epsilon}\alpha_{m,\epsilon}(s)|_{\epsilon=0}$ and $\partial_{\epsilon}\beta_{m,\epsilon}(s)|_{\epsilon=0},$ this has been proved in Section \ref{function}, immediately after the introduction of $\alpha_{m, \epsilon}(s)$ in \eqref{F0}. 
This, in turn, together with  \eqref{hat}, \eqref{F} and the holomorphicity of $R_{0,A}(s)H_{m}(\cdot,s)$ and $\partial_{\epsilon} R_{\epsilon,A}(s)H_{m}(\cdot,s) |_{\epsilon=0}$ shown in Section \ref{function} (right after \eqref{R}) establishes the holomorphicity of $\hat F_{m, 0, A}(z, s)$ and $\partial_{\epsilon}\hat F_{m,\epsilon,A}(z, s)|_{\epsilon=0}$. The holomorphicity of $D_0(z, s)$ and $\partial_{\epsilon}D_\epsilon(z,s)|_{\epsilon=0}$ follows from the proof of \cite[Lemma 4.1]{P}. 

To prove part~\ref{mero-iii}, from Lemma \ref{tail} and \eqref{Z*-Zchi}, we have that 
$$\partial_\epsilon \left ( \sum_{\ell \ge 1}\frac{(2 \pi \sqrt{mn})^{2\ell}\Gamma(2s)}{\ell! \Gamma(2s+\ell)} Z_{m, n }(s+\ell, \chi_\epsilon)\right )\Big |_{\epsilon=0}=
\sum_{\ell \ge 1}\frac{(2 \pi \sqrt{mn})^{2\ell}\Gamma(2s)}{\ell! \Gamma(2s+\ell)} Z^*_{m, n}(s+\ell).$$ Now, for $1>\Re(s)>1/2,$ $|t|>1$ and $\ell \ge 1$, $Z^*_{m, n}(s+\ell)$ is absolutely convergent and universally bounded. Since $\Gamma(2s)/\Gamma(2s+\ell) \sim |t|^{-\ell}$, the sum is $\ll |t|^{-1} \ll 1.$ Therefore, by \eqref{phimn}, it suffices to bound
\begin{equation}\label{nthFourier} \frac{\Gamma(2s)}{(2 \pi \sqrt{mn})^{2s-1}} \partial_{\epsilon}\phi_{m, n}(s; \chi_\epsilon)|_{\epsilon=0}.
\end{equation}
Let $Y>A+1$. Then, for $y>Y$, $U_{\epsilon}=1$ and hence, by \eqref{key} and \eqref{FourierFm}, the coefficient of $e^{-2 \pi n x}$ in the expansion of $F'_m(x+iy)$ is $\partial_{\epsilon} \phi_{m, n}(s; \chi_{\epsilon}, y)|_{\epsilon=0}=2 \sqrt{y} K_{s-\frac12}(2 \pi n y)\partial_{\epsilon} \phi_{m, n}(s; \chi_{\epsilon})|_{\epsilon=0}$. Further,
$$\|F'_m\|^2_{L^2(X)} \ge \int_{Y}^{Y+1} \int_0^1  |F'_m(x+iy)|^2 \frac{dx dy}{y^2}$$
and thus, with Parseval in the inner integral followed by the lower bound for the integral of $K$-Bessel functions of the proof of \cite[Theorem 3.2]{IwS}, we deduce
$$\|F'_m\|^2_{L^2(X)} \ge 4 |\partial_{\epsilon} \phi_{m, n}(s, \chi_\epsilon)|_{\epsilon=0}|^2 \int_{Y}^{Y+1}  |K_{s-\frac12}(2 \pi n y)|^2 \frac{dy}{y} \gg_n |\partial_{\epsilon} \phi_{m, n}(s, \chi_\epsilon)|_{\epsilon=0}|^2  e^{-\pi |t|} |t|^{-1}.$$
With \eqref{boundF}, this implies $ |\partial_{\epsilon} \phi_{m, n}(s, \chi_\epsilon)|_{\epsilon=0}| \ll_n e^{\pi |t|}|t|^2 (2 \sigma-1)^{-3}$. Together with \eqref{nthFourier} and an application of Stirling to $\Gamma(2s)$ we obtain the assertion.
\end{proof}

We will now use this theorem to establish the analogue of the second main result of \cite{GS}. 
\begin{thm}\label{Theorem2} If
$$\beta\coloneqq \overline{\lim_{\substack{c \to \infty \\ N|c}}}\frac{\log|S^*(m, n; c)|}{\log c}$$
then there are $r_j \in \mathbb C$ and $\alpha_j \in (0, 1)$ such that, for all $\varepsilon>0$, 
$$\sum_{\substack{0<c<x \\ N|c}}\frac{S^*(m,n; c)}{c}=\sum_{j=1}^{M} r_j x^{\alpha_j}+O_{m , n} \left (x^{\frac{7\beta}{9}+\varepsilon}\right ).$$
\end{thm}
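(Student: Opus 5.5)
We follow the Perron-formula argument of Goldfeld--Sarnak, with Theorem~\ref{meromcontin} taking the place of their continuation and growth result for $Z_{m,n}(s)$. Note that
$$\sum_{N|c}\frac{S^*(m,n;c)}{c}\,c^{-w}=Z^*_{m,n}\Bigl(\tfrac{1+w}{2}\Bigr),$$
so in the variable $w=2s-1$ the generating Dirichlet series is continued to $\Re(w)>0$. Its poles in $0<\Re(w)<1$ can only lie at the points $w=2s_j-1$, $1\le j\le M$, coming from exceptional eigenvalues. Moreover, by part~\ref{mero-iii}, on $\Re(w)=\delta>0$ with $|\Im w|>2$ it grows like $|\Im w|^{7/2+\varepsilon}\delta^{-3}$. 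By the definition of $\beta$ we have $|S^*(m,n;c)|\ll c^{\beta+\varepsilon}$. Hence the series converges absolutely for $\Re(w)>\beta$, and the terms $a_c=S^*(m,n;c)/c$ are $\ll c^{\beta-1+\varepsilon}$. (Trivially $\beta\le 1$.)

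The first step is a truncated Perron formula. For $x$ not an integer and $T\ge 1$, with $\kappa=\beta+\varepsilon$, we have
$$\sum_{\substack{0<c<x\\ N|c}}\frac{S^*(m,n;c)}{c}=\frac{1}{2\pi i}\int_{\kappa-iT}^{\kappa+iT}Z^*_{m,n}\Bigl(\tfrac{1+w}{2}\Bigr)\frac{x^w}{w}\,dw+O\Bigl(\frac{x^{\kappa}}{T}\sum_c\frac{|a_c|}{c^{\kappa}|\log(x/c)|}+x^{\beta-1+\varepsilon}\Bigr).$$
The standard treatment of the terms with $c$ near $x$ makes this error $O(x^{\beta+\varepsilon}/T)$. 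Next we shift the contour to $\Re(w)=\delta$, with $\delta>0$ small. To avoid spurious poles we take $T$ away from the imaginary parts of the poles; only finitely many poles lie in this rectangle in any case, and by part~\ref{mero-ii} they are the exceptional ones. We pick up residues at $w=2s_j-1$, which produce the main terms $r_jx^{\alpha_j}$ with $\alpha_j=2s_j-1\in(0,1)$. If a pole at $w=0$ or at $s=1$ were to arise, it would be handled in the same way. The ratio used here is real, hence the $\alpha_j$ are real.

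The remaining integrals are bounded by part~\ref{mero-iii}. The left vertical integral is
$$\ll x^{\delta}\delta^{-3}\int_1^T t^{7/2+\varepsilon}\frac{dt}{t}\ll x^{\delta}\delta^{-3}T^{7/2+\varepsilon}.$$
For the horizontal segments the integrand is $\ll x^{u}T^{-1}\max(\ldots)$. The main obstacle is that part~\ref{mero-iii} alone gives a polynomial bound $T^{7/2}$ throughout the strip, whereas near $\Re(w)=\kappa$ the bound is $O(1)$. So I would interpolate by Phragmén--Lindelöf between $\Re(w)=\delta$, where the bound is $T^{7/2+\varepsilon}$, and $\Re(w)=\beta+\varepsilon$, where it is $\ll 1$. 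This yields the convexity bound $T^{\frac72\cdot\frac{\beta-u}{\beta}+\varepsilon}$ at $\Re(w)=u$, and hence horizontal contributions
$$\ll \max_{u} x^{u}T^{\frac{7(\beta-u)}{2\beta}-1+\varepsilon},$$
which are dominated by the two endpoint cases. Letting $\delta\to0$ (taking $\delta=\varepsilon$), the total error is
$$\ll x^{\varepsilon}T^{7/2+\varepsilon}+x^{\beta+\varepsilon}T^{-1}.$$
Choosing $T=x^{2\beta/9}$ balances the two terms and gives $O(x^{7\beta/9+\varepsilon})$, as claimed.

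The delicate points are these. First, $Z^*$ has poles accumulating only at $\Re(s)=1/2$, so the contour at $\Re(w)=\varepsilon$ is legitimate. Second, the constant in part~\ref{mero-iii} costs only $\varepsilon^{-3}$, which is absorbed into the $O_\varepsilon$. Third, the Phragmén--Lindelöf step must be applied to $Z^*$ with its finitely many exceptional poles removed, for instance by multiplying by $\prod_j (w-2s_j+1)$; this does not change the polynomial orders involved.
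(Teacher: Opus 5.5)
Your proposal is correct and follows essentially the same route as the paper. Both arguments use a truncated Perron formula on $\Re(w)=\beta+\varepsilon$ with error $O(x^{\beta+\varepsilon}/T)$, shift the contour to $\Re(w)=\varepsilon$ picking up only the exceptional residues at $w=2s_j-1$, and control the horizontal segments with the convexity bound $|t|^{\frac72-\frac{7\sigma}{2\beta}+\varepsilon}$ (Theorem~\ref{meromcontin}(iii) interpolated against boundedness on the line of absolute convergence), then take $T\asymp x^{2\beta/9}$ to balance $x^{\varepsilon}T^{7/2+\varepsilon}$ against $x^{\beta+\varepsilon}/T$.
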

\begin{proof} Thanks to the previous results, it is possible to follow very closely the proof of \cite[Theorem 2]{GS}. First, by \eqref{std-taub-int} and the absolute convergence of $Z^*_{m, n}((1+s)/2)$ on $\Re(s)=\beta+\varepsilon$,  $$\sum_{\substack{0<c<x \\ N|c}}\frac{S^*(m,n; c)}{c}=\frac{1}{2 \pi i} \int_{(\beta+\varepsilon)} Z^*_{m, n} \left ( \frac{1+s}{2} \right ) \frac{x^s}{s}ds.$$ With \cite[Lemma, Chapter 17]{D}, this becomes, for $T>1$,
$$\sum_{\substack{0<c<x \\ N|c}}\frac{S^*(m,n; c)}{c}=\frac{1}{2 \pi i} \int_{\beta+\varepsilon-iT}^{\beta+\varepsilon+iT} Z^*_{m, n} \left ( \frac{1+s}{2} \right ) \frac{x^s}{s}ds+O \left ( \frac{x^{\beta+\varepsilon}}{T}\right ).$$ 
With Cauchy's Residue Theorem this becomes
\begin{multline}\label{cauchy}\sum_{\substack{0<c<x \\ N|c}}\frac{S^*(m,n; c)}{c}=\sum_{j=1}^M \underset{s=2 s_j-1}{\text{Res}} \left (Z^*_{m, n} \left ( \frac{s+1}{2}\right ) \frac{x^{2s_j-1}}{2s_j-1}\right ) \\
+\frac{1}{2 \pi i} \left ( \int_{\varepsilon-iT}^{\varepsilon+iT}+  \int_{\varepsilon+iT}^{\beta+\varepsilon+iT} - \int_{\varepsilon-iT}^{\beta+\varepsilon-iT} \right ) Z^*_{m, n} \left ( \frac{1+s}{2} \right ) \frac{x^s}{s}ds+O \left ( \frac{x^{\beta+\varepsilon}}{T}\right ).\end{multline}
To bound the integrals in the RHS we use the following bound, which is deduced by Theorem \ref{meromcontin}(iii) and convexity:
\begin{equation}\label{PhL} 
Z_{m, n}^* \left ( \frac{1+s}{2}\right ) \ll |t|^{\frac72-\frac{7\sigma}{2\beta}+\varepsilon}, \qquad \text{if $\varepsilon \le \sigma \le \beta+\varepsilon$.}
 \end{equation} 
We have
\begin{equation}\label{shifted} \int_{\varepsilon-iT}^{\varepsilon+iT}Z^*_{m, n} \left ( \frac{1+s}{2} \right ) \frac{x^s}{s}ds \ll x^{\varepsilon}\int_{-T}^T \frac{|t|^{\frac72+\varepsilon}dt}{\sqrt{\varepsilon^2+t^2}}\le  x^{\varepsilon}\int_{-T}^T \frac{(1+|t|)^{\frac72+\varepsilon}dt}{t} \ll x^{\varepsilon}T^{\frac72+\varepsilon} \, \, \text{and}\end{equation}
$$\int_{\varepsilon \pm iT}^{\beta+\varepsilon \pm iT} Z^*_{m, n} \left ( \frac{1+s}{2} \right ) \frac{x^s}{s}ds \ll T^{\frac72+\varepsilon}\int_{\varepsilon \pm iT}^{\beta+\varepsilon \pm iT} \frac{(x/T^{7/(2\beta)})^{\sigma}d\sigma}{\sqrt{\sigma^2+T^2}}\le 
 T^{\frac72+\varepsilon}\int_{\varepsilon \pm iT}^{\beta+\varepsilon \pm iT} \frac{(x/T^{7/(2\beta)})^{\sigma}d\sigma}{T}.$$
If $x>T^{7/(2\beta)},$ this is  $\le T^{\frac52+\varepsilon}(x/T^{7/(2\beta)})^{\beta+\varepsilon} \le T^{-1}x^{\beta+\varepsilon}$, otherwise, $\le T^{\frac52+\varepsilon}x^{\varepsilon}.$
Combining this with \eqref{shifted} and \eqref{cauchy} we deduce that, in either case,
$$\sum_{\substack{0<c<x \\ N|c}}\frac{S^*(m,n; c)}{c}=\sum_{j=1}^M \underset{s=2 s_j-1}{\text{Res}} \left (Z^*_{m, n} \left ( \frac{s+1}{2}\right ) \frac{x^{2s_j-1}}{2s_j-1}\right ) +O \left ( \frac{x^{\beta+\varepsilon}}{T^{1-\varepsilon}}+T^{\frac72+\varepsilon}x^{\varepsilon}\right ).$$
Selecting $T=x^{2\beta/9}/(7/2)^{2/9}$ to minimize the error term, we deduce the theorem.
\end{proof}

\section{The relation with exceptional eigenvalues}\label{Cool-numerical-section}

There is a deep connection between the exceptional spectrum on $\Gamma_0(N)$ and analytic properties of the standard Kloosterman zeta function, $Z_{m,n}(s)=\sum_{N\mid c>0}c^{-2s}S(m,n,c)$. In particular, if $Z_{m,n}(s)$ is holomorphic in $\Re(s)>\frac12$, then the exceptional spectrum is empty.

From Theorem~\ref{meromcontin},
there is also a connection between the analytic properties of 
$Z^{*}_{m,n}(s)
$ and the existence
of exceptional eigenvalues. Despite both being related to the exceptional spectrum, $Z_{m,n}(s)$ and $Z^{*}_{m,n}(s)$ seem to be un-correlated as we will see below.

We recall here that 
the 
Selberg eigenvalue conjecture asserts that 
there are no exceptional eigenvalues for congruence subgroups. This conjecture has been verified in special case by a number of authors.
The most recent and extensive work is 
\cite{BSL}, where they show, for instance, that it holds for $\Gamma_1(N)$ with $N\le880$ and for $\Gamma(N)$ with  $N\le 226$.

In the remainder of this section we will present the results of some 
numerical investigations of $Z_{m,, n}, Z^*_{m, n}$ mainly from the perspective of exceptional eigenvalues. 

\subsection{Selberg -- Linnik type bounds}

A key element in investigating the singularities of $Z^{*}_{m,n}(s)$ is the Selberg -- Linnik type bound for the partial sums at $s=1/2$. We conjecture
\begin{conj}\label{new-SL-conj} For $m,n\neq0$,
\begin{equation} 
Z_{m, n}^*(x)\coloneqq\sum_{N\mid c\le x}\frac{S^*(m,n;c)}{c}  \ll_{m,n,\varepsilon} x^\varepsilon.\label{linnikbd}
\end{equation}
\end{conj}
First observe that the weights,
$\langle f,\gamma_d\rangle$, are not of modulus one: suitably normalized modular symbols
have a normal distribution with variance $\sim \log c$, by Petridis--Risager
\cite{PR} 
so even the termwise (Weil-type) behaviour of $S^*$ is not
obvious. Second, as in Section \ref{6},  
the relevant aim
is the behaviour of $Z_{m,n}(s;\chi_\epsilon)$ near $\Re s = 1/2$
\emph{uniformly} in $\epsilon$. Then, the embedded eigenvalues of $\Gamma_0(N)$ move (and may
dissolve into resonances) under the character deformation, in the spirit of
Phillips--Sarnak \cite{PS, P}. Numerical evidence is
therefore of genuine interest.

Since this sum involves a large number of modular symbol evaluations  for large moduli we implemented an algorithm which computes a complete set of 
$\langle f, \gamma_d\rangle$ where  $\gamma_d=\left(\begin{smallmatrix}
	a &(ad - 1)/c \\
	c & d
\end{smallmatrix}\right)$ with $a^{-1}\equiv d\pmod{c}$ in terms of the 
Eichler integral of $f$ evaluated at all the points $(j+i)/c$, $j\bmod c$, simultaneously by a single FFT, at a cost of $O(c\log c)$ per modulus.
The complete runtime for evaluating $Z_{m, n}^{*}(x)$ is then easily seen to be 
$O(x^2\log x / N)$. Note that the computation of Fourier coefficients 
$a(n)$ is done once for each $N$ and, if using e.g. Schoof's algorithm, the cost is negligible compared to the main term above. 

The computed data cover all $28$ levels $N\le 50$ carrying a newform of weight $2$ ($84$ series with
$(m,n)\in\{(1,1),(1,2),(1,5)\}$ and $c\le 10^5$), levels $N=11,49,50$ to
$c\le 10^6$, and the CM level $N=49$ to $c\le 10^7$.
All data is consistent with the Linnik--Selberg--type bound \eqref{linnikbd}.

 In Figure \ref{fig:app-N11-1-1} we illustrate the case of $N=11$.  In the first figure we plot $Z_{m, n}^{*}(x)$ and $Z_{m, n}(x)$ for 
 $x \in [0,10^6]$. 
The middle figure shows the absolute value $|Z_{m, n}^{*}(x)|$ on a log-log scale 
together with a log-linear least square fit with slope $\alpha$, of the window-max envelope where maxima are taken over 24 logarithmically-spaced windows.     
The right-most figure shows the growth of the individual twisted Kloosterman sums
$|S^{*}(m,n,c)|$ on a log-log scale 
together with the Weil bound $\sqrt{c}$ and window-max envelopes and 
the log-linear least square fit. 
\begin{figure}[H]\centering
	\includegraphics[width=\textwidth]{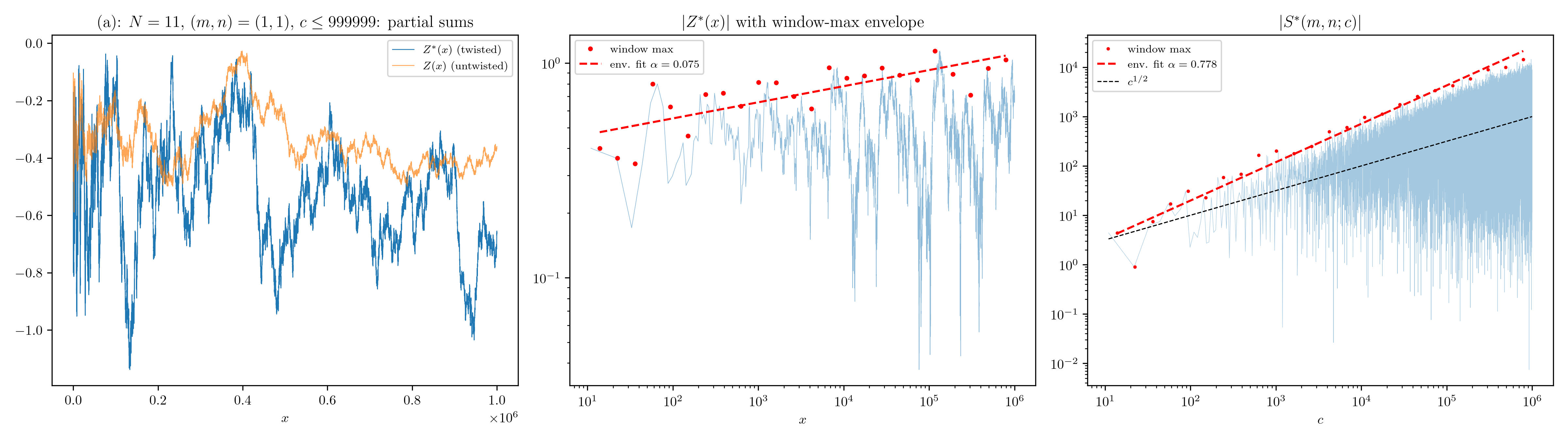}
	\caption{$N=11$, $(m,n)=(1,1)$, $c\le999999$.}
	\label{fig:app-N11-1-1}
\end{figure}
\begin{figure}[H]\centering
	\includegraphics[width=\textwidth]{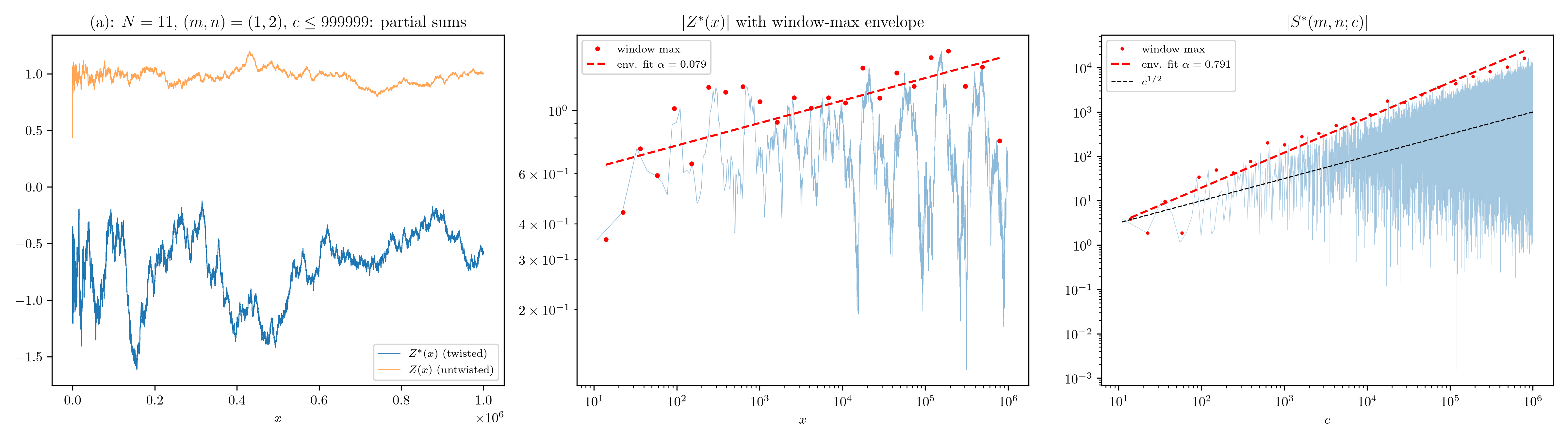}
	\caption{$N=11$, $(m,n)=(1,2)$, $c\le999999$.}
	\label{fig:app-N11-1-2}
\end{figure}
\begin{figure}[H]\centering
	\includegraphics[width=\textwidth]{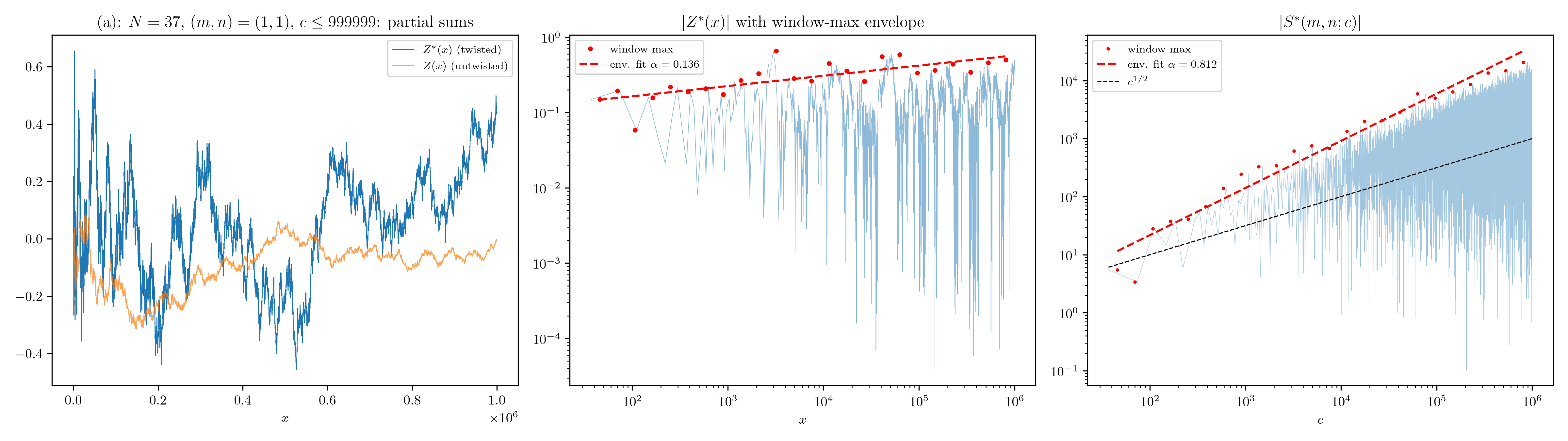}
	\caption{$N=37$, $(m,n)=(1,1)$, $c\le999999$.}
	\label{fig:app-N37-1-1}
\end{figure}
\begin{figure}[H]\centering
	\includegraphics[width=\textwidth]{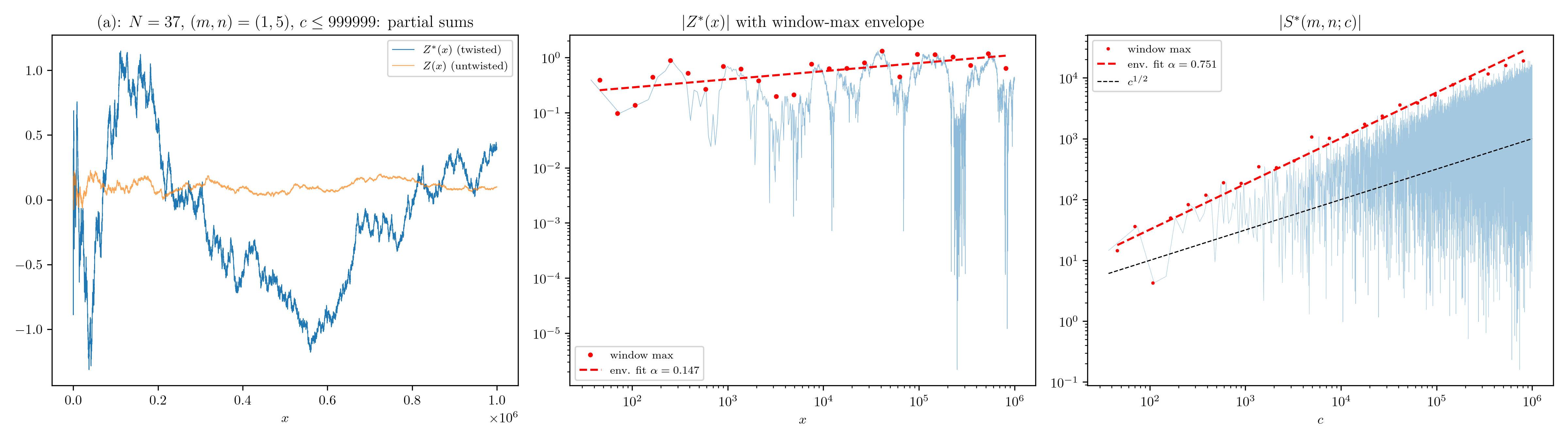}
	\caption{$N=37$, $(m,n)=(1,5)$, $c\le999999$.}
	\label{fig:app-N37-1-5}
\end{figure}

At squarefree levels $N$ the partial sums remain bounded as $c$ varies over 6 decades (i.e. intervals of the form $[10^k,10^{k+1}$),
closely tracking the ordinary Kloosterman partial sums, $Z(x)$, with centred fluctuations of
size $\sqrt{c}\log c$ --- the extra logarithm reflecting the normal distribution of
modular symbols \cite{PR}. 

These computations bear directly on the poles in Theorem~\ref{Theorem2}. A
block-convergence test indicates that $Z^*_{m,n}(s)$ has no pole in $\Re(s)>0.56$ at any
of the levels examined --- consistent with the absence of exceptional eigenvalues there
--- so that no main term $r_j x^{\alpha_j}$ is detected and $Z^*_{m,n}(x)$ remains of size
$\log^2 x$ or smaller, far below the proved error $O\big(x^{7\beta/9+\varepsilon}\big)$.
The termwise exponent is measured to be $\beta=\tfrac12$, and the vertical growth of
$Z^*_{m,n}(s)$ in the strip $\tfrac12<\Re(s)<\tfrac34$ is found to be of essentially
bounded order, well below the convexity bound that yields the exponent $7\beta/9$; this
suggests that the error term in Theorem~\ref{Theorem2} is far from sharp and could be
improved under a Lindel\"of-type hypothesis for $Z^*_{m,n}$.

\subsection{Correlation of $S(m,n;c)$ and $S^*(m,n;c)$}

As $S(m,n;c)$ and $S^*(m,n;c)$ are both related to the exceptional spectrum 
and both involve a sum of exponential phases $e((na+md)c^{-1})$ 
with weights $1$ and $\langle f,\gamma_d\rangle$, respectively, 
 one might expect that they are correlated. 
However, the numerical evidence suggests otherwise. 

Define 
$$\rho_{m,n}(x) = \#\{N \mid c \le x : \operatorname{sign}S(m,n;c) = \operatorname{sign}S^*(m,n;c)\}
/\#\{N \mid c \le x\}.$$ Then uncorrelated signs would correspond to $\rho_{m,n}(x) \to 1/2$ as $x \to \infty$, 
in line with viewing the sequence as a Bernoulli process with probability $1/2$ (a sequence of fair coin tosses).
This behaviour is indeed observed in the numerical data, as illustrated in Figure \ref{fig:correlation}, where $\rho_{m,n}(x)$ is plotted over a sequence of dyadic windows in $c$ up to $10^6$ for the prime 
levels $N=11, 17, 19, 37, 43$ and three different pairs of $(m,n)$. 
The shaded regions are representing two standard deviations.

\begin{figure}
        \centering
        \includegraphics[width=0.8\textwidth]{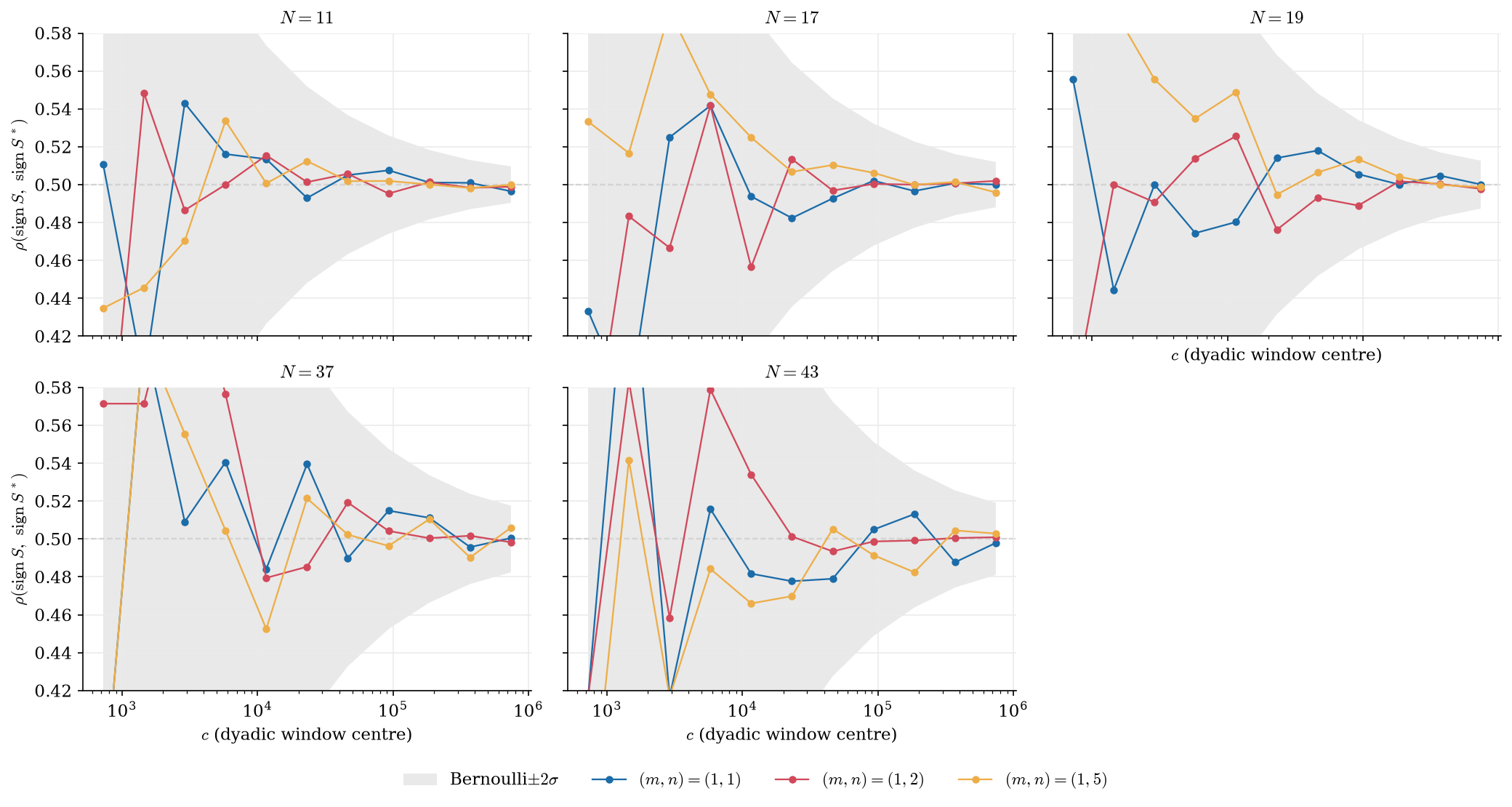}
        \caption{Correlation $\rho$ between signs of $S(m,n;c)$ and $S^*(m,n;c)$}
        \label{fig:correlation}
\end{figure}
The Pearson correlation betwen $S$ and $S^{*}$ 
over the same range of dyadic windows as above is also computed, and the results are shown in Figure \ref{fig:pearson_correlation}.
Note this is a correlation of the summands, 
not of the Linnik partial sums themselves.
More precisely, what is plotted is 
\[
r(S,S^*)(x) = 
\frac{\sum_{c\in W_x} (S(c)-\overline{S})(S^*(c)-\overline{S^*})}%
{\sqrt{\sum_{c\in W_x} (S(c)-\overline{S})^2}\sqrt{\sum_{c\in W_x} (S^*(c)-\overline{S^*})^2}},
\]
where $S(c)=S(m,n,c)$, $S^{*}(c)=S^{*}(m,n,c)$, $W_x={N\mid c\;:\;  x < c \le 2x}$ is the dyadic window and $\overline{S}$ and $\overline{S^*}$ are the means of $S$ and $S^*$ over the window, respectively.
\begin{figure}
        \centering
        \includegraphics[width=0.8\textwidth]{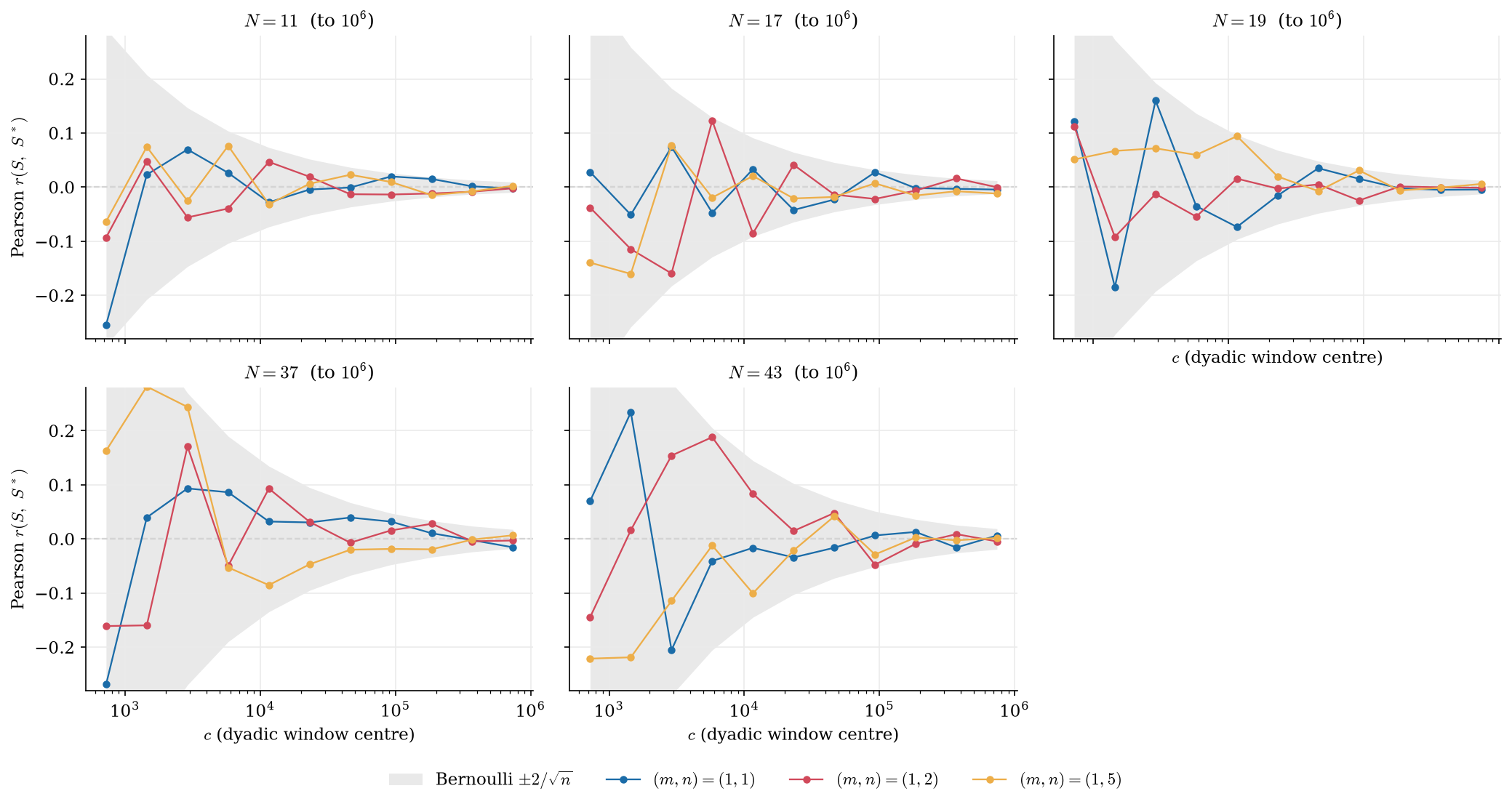}
        \caption{Pearson correlation $r(S,S^*)(x)$ for the values of $S(m,n;c)$ and $S^*(m,n;c)$ in the dyadic window $W_x$}
        \label{fig:pearson_correlation}
\end{figure}

Based on this data, we make the following conjecture concerning the lack of correlation between $S(m,n;c)$ and $S^*(m,n;c)$.
\begin{conj}\label{no-correlation}
The sums $S(m,n;c)$ and $S^*(m,n;c)$ are uncorrelated in the following senses.
\begin{enumerate}
\item For any $m,n\neq0$,  $\rho_{m,n}(x)\to 1/2$ as $x\to\infty$.  That is, the signs of $S(m,n;c)$ and $S^*(m,n;c)$ are not correlated.
\item For any $m,n\neq0$, the Pearson correlation $r(S,S^*)(x)\to 0$ as $x\to\infty$.  
\end{enumerate}
\end{conj}

\subsection{The Tauberian theorem}\label{num-Taub}

It follows from Theorem \ref{main} that for any fixed non-zero integers $n$ and $k>6$,
if the exceptional eigenvalues are absent, then \eqref{final}  becomes, as $X \to \infty$:
\[
r(X) = \mathcal{S}^*(n,k,X) - A(n) X = 
\sum_{j>M} \left ( R^{+}_j(n)X^{s_j}+R^{-}_j(n)X^{\bar s_j} \right ) + 
 \mathcal{E}^{*}(n,k, X) + O(X^{1/2-\varepsilon}),
\]
where $\mathcal{S}^*(n,k,X)$ is the smoothed $S^*$ sum, $A(n)$ is an explicit constant, $R^{\pm}_j(n)$ 
are as in Theorem \ref{main}, and $\mathcal{E}^*(n,k, X)$ is an explicit constant times the boundary integral.
In addition to verifying the asymptotic formula, as illustrated in Figure \ref{fig:tauberian},
we can also investigate the size and behaviour of the remainder terms.

\begin{figure}
        \centering
        \includegraphics[width=0.8\textwidth]{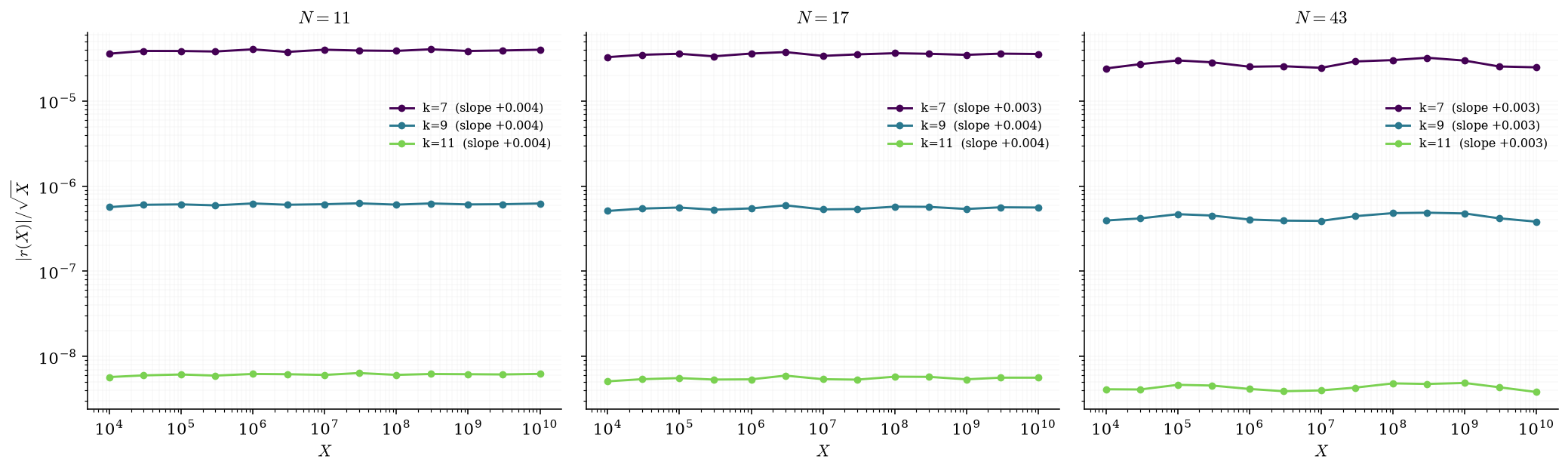}
        \caption{The relative error $|r(X)|/\sqrt{X}$ in log--log scale for $N=11$, $17$ and $43$, $n=1$ and $k=7,9,11$. The dashed line has slope $-1/2$; the fitted slopes are $0.004\pm0.001$, uniformly in level and weight.}
        \label{fig:tauberian}
\end{figure}

Both terms are of order $O(X^{1/2})$ but the spectral residue sum over embedded eigenvalues is oscillatory and the boundary term is monotone.
Writing $\mathcal{E}^*(n,k, X)=C\cdot X^{1/2}$ and combining the contributions from 
$s_j$ and $\bar{s}_j$ we find that 
\begin{equation}
        \label{eq:rXparams}
\frac{r(X)}{\sqrt{X}} = C + \sum_{j>M} B_j \cos(t_j \log X - \theta_j) 
  + O(X^{-\varepsilon}),
\end{equation}
where $B_j = 2|R_j^{+}(n)|$ and $\theta_j=\arg R_j^{+}(n)$. 
As an alternative to computing the actual values we can also view the 
parameters $C$, $B_j$ and $\theta_j$ as free variables and do a numerical fit.
This way we decompose $r(X)/\sqrt{X}$ as a constant, $C$, plus a slow decaying drift
corresponding to $O(X^{-\epsilon})$ and an oscillatory sum of cosines. 
It turns out that a good choice is to approximate the slow drift by a degree $3$ polynomial in $u=\log X$ 
which we obtain by a least-square fit. 
The oscillatory terms and the slow drift are illustrated in 
Figure \ref{fig:tauberian1} for $k=7$, $n=1$ and $X\leq10^{10}$.

We conclude that these numerical findings are consistent with the absence of exceptional eigenvalues since otherwise, the second term in the RHS would have order $O(X^c)$ with $c>1/2$. In theory, it could happen that exceptional eigenvalues exist but that all $R_j^+(n)$ vanish and thus, our asymptotics do not see it. However, this is unlikely because $R^+_j(n)$ are explicit scalar products of $\sum_{\substack{\ell : \lambda_\ell=\lambda_j}} L_{f \otimes \eta_{\ell}}(\bar s_j)\rho_\ell(n)$ and thus the vanishing of  $R^+_j(n)$ would impose linear dependence relations on $L_{f \otimes \eta_{\ell}}(\bar s_j)$ with coefficients $\rho_j(n)$ for every $n \neq 0.$

We formalize these expectations in the following Conjecture concerning sums of Ramanujan sums that involve modular symbols.

\begin{conj}\label{conj-ram-sums}
\begin{enumerate}
\item\label{part-one-ramsums} Fix $j>0$.  Then there exists an $n\in \mathbb{Z}$, $n\ne0$, such that $R_j^+(n)\neq0$.
\item\label{part-two-ramsums} For every non-zero integer $n$, every $k>6$ and every $\varepsilon>0$, $r(X)=\text{O}_{n,k,\epsilon}(X^{1/2+\varepsilon})$.
\end{enumerate}
\end{conj}

From Theorem~\ref{main}, we see that Conjecture~\ref{conj-ram-sums} implies that there are no exceptional eigenvalues.  In fact, given $n$ as in 
part~\ref{part-one-ramsums}, it 
would suffice to find a single $k$ such that the estimate in part~\ref{part-two-ramsums} holds.

\begin{figure}
        \centering
        \includegraphics[width=0.8\textwidth]{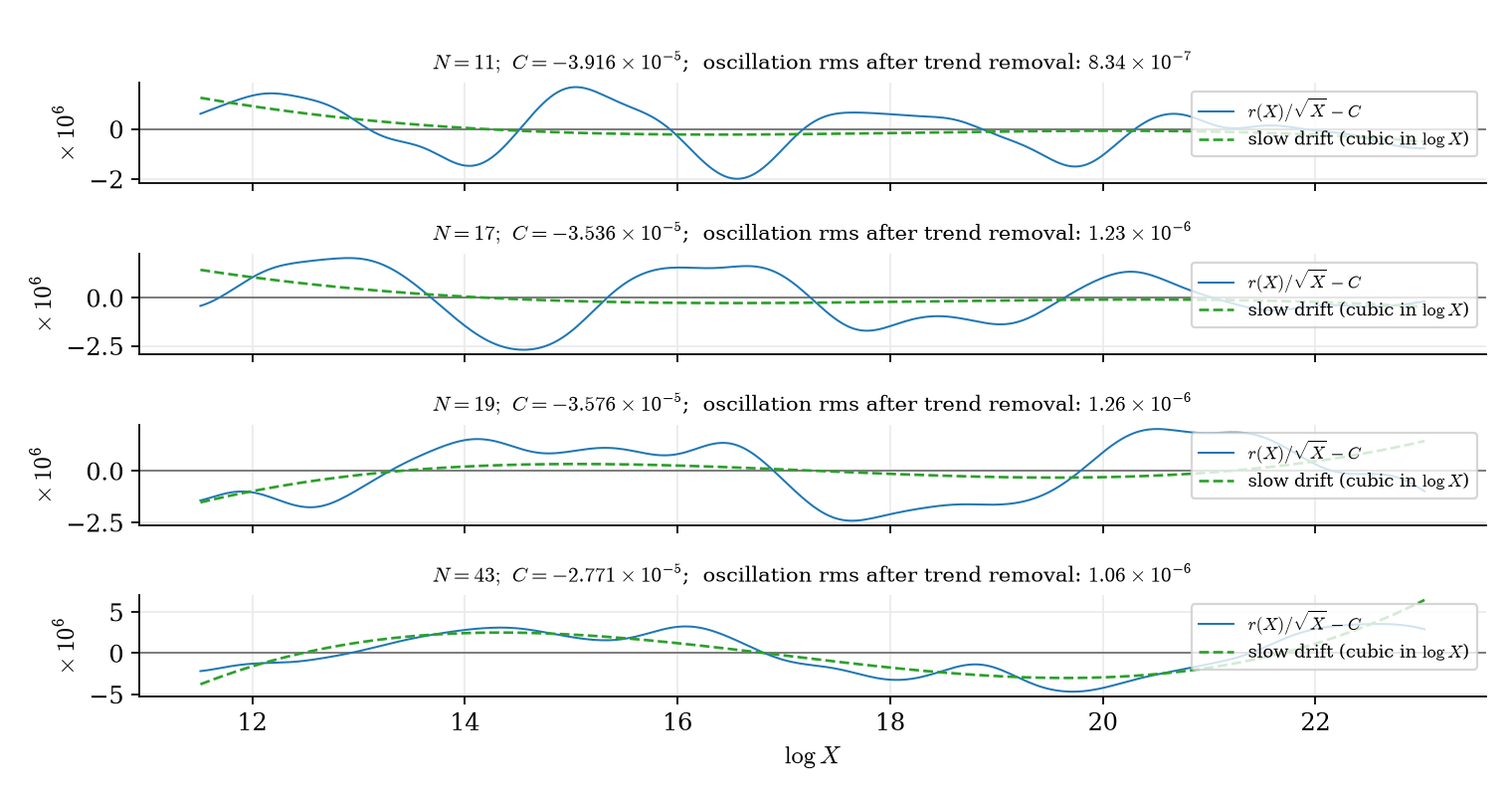}
        \caption{
        The oscillatory term: $r(X)/\sqrt{X} - C$  ($k=7$, $n=1$, $X\leq10^{10}$)
        }        
        \label{fig:tauberian1}
\end{figure}

\subsection{Eisenstein contribution}

In this section we will give numerical estimates of the boundary integral $\mathcal{E}^*(n,k,X)$ in \eqref{eq:rXparams} and compare it with the numerical fit 
obtained from $r(X)/\sqrt{X}$.
First note that by 
differentiating \eqref{eq:phi_ns}, and using that $K_\nu(y)$ is even in the order $\nu$ (so that the derivative of the Bessel factor vanishes at $s=\tfrac12$), we find that 
\[
\mathcal{E}^*(n,k,X)
   =\frac{\pi\,I_0(2\pi|n|)\,K_0(2\pi|n|)}{4\,\Gamma\!\left(k+\tfrac12\right)}\;
     \phi^{*\prime}(n,\tfrac12)\;X^{1/2},
\]
where 
$\phi^{*\prime}(n,s)$ is the derivative of the analytic 
continuation of $\phi^*(n,s)$ and if we write 
\begin{equation}\label{eq:hD}
h(s)=\frac{\pi^{s}|n|^{s-1}}{\Gamma(s)} \quad\textrm{and}\quad
D(s)=\sum_{c\ge1}(pc)^{-2s}S^*(n,0;pc)
\end{equation}
then $\phi^*(n,s)=h(s)D(s)$ and 
\begin{equation}\label{eq:phiprime}
  \phi^{*\prime}(n,\tfrac12)
   =\frac{1}{\sqrt{|n|}}\Big[\big(\log(4\pi|n|)+\gamma_E\big)\,D(\tfrac12)+D'(\tfrac12)\Big],
\end{equation}
where we stress that $D(\frac{1}{2})$ and $D'(\frac{1}{2})$ are values of the analytic continuation of $D(s)$
and its derivative, as the Dirichlet series is not absolutely convergent at this point.

To obtain a numerical estimate at the point $s=1/2$ we {\bf assume} 
that $S^{*}(n,0,pc)$ satisfies a Linnik-type bound of a form which is observed in the experimental data.
More precisely, we assume that 
with $\kappa = a(n)p^{2}/(\pi|n|\operatorname{Vol}(\Gamma\backslash\mathbb{H}))$ --- the value forced by Theorem \ref{0thmn}, and reproduced by the data to five digits ---
the centred fluctuation $E(c)=S^*(n,0,pc) - 2\kappa c$ satisfies 
 \begin{equation}
 \sum_{c\le X} E(c) \ll X^{1+\epsilon}.
 \end{equation} 
It can then be shown that  $D(s)$ has a meromorphic continuation to 
$\Re(s)>1/2$ with a simple pole at $s=1$ and, we can write 
\[
D(s)= P(s) + S(s) = 2\kappa p^{-2s} \zeta(2s-1) + p^{-2s}\sum_{c\ge 1} E(c) c^{-2s},
 \]
 where the last series is convergent for $\Re(s)>1/2$. The two 
sums 
\begin{equation}\label{eq:reg}
  D(\tfrac12)=-\frac{\kappa}{p}+\sum_{c\ge1}\frac{E(c)}{pc},
  \qquad
  D'(\tfrac12)=\frac{2\kappa}{p}\log\frac{p}{2\pi}-2\sum_{c\ge1}\frac{E(c)\log(pc)}{pc}.
\end{equation}
converge conditionally and we evaluate them using Riesz sums of order 2 - 5.
The results of these calculations in the case of levels $11$ and $43$ 
are shown in Table \ref{tab:riesz}.
To simplify notation we have written $S_C$ and $S'_C$ for the sums $S(s)$ and $S'(s)$ truncated at $c=C$
using either the ``naive'' sum or a Riesz sum.
Observe that naive truncations of the pole-subtracted boundary sums oscillate --- 
at $p=11$ the $\log$-weighted sum ranges over
$[-4.6,+2.7]$ as the cutoff moves --- while the four Riesz orders agree to $\pm0.09$, a
$\sim40$-fold stabilisation; the residual monotone drift in the Riesz order is the finite-$C$ bias.
 At $p=43$ the same machinery on a $4\times$ shorter
table gives a $2.3\times$ larger relative spread on the dominant $S_C'$-sum.
\begin{table}[ht]
\centering
\small
\setlength{\tabcolsep}{3.6pt}
\begin{tabular}{l rr rr}
\toprule
& \multicolumn{2}{c}{$p=11$ \ ($c_{max}=16113$)} & \multicolumn{2}{c}{$p=43$ \ ($c_{max}=4121$)}\\
\cmidrule(lr){2-3}\cmidrule(lr){4-5}
$C=c_{\max}$ & \multicolumn{1}{c}{$S_C$} & \multicolumn{1}{c}{$S'_C$}
 & \multicolumn{1}{c}{$S_C$} & \multicolumn{1}{c}{$ S'_C$}\\
\midrule
naive at $C/4$   & $+0.4260$ & $-4.6167$ & $+0.2653$ & $-1.3364$\\
naive at $C/2$   & $+0.0930$ & $+2.7470$ & $+0.2719$ & $-1.2084$\\
naive at $3C/4$  & $+0.2505$ & $-0.8409$ & $-0.0900$ & $+7.2727$\\
naive at $C$     & $+0.3947$ & $-4.3246$ & $+0.4097$ & $-4.6542$\\
\midrule
Riesz $r=2$          & $+0.2717$ & $-1.3248$ & $+0.3296$ & $-2.6345$\\
Riesz $r=3$          & $+0.2753$ & $-1.4066$ & $+0.3439$ & $-3.0162$\\
Riesz $r=4$          & $+0.2777$ & $-1.4624$ & $+0.3424$ & $-3.0242$\\
Riesz $r=5$          & $+0.2791$ & $-1.4965$ & $+0.3345$ & $-2.8797$\\
\midrule
$P(1/2),\ P'(1/2)$ & $-0.2786$ & $+0.3121$ & $-0.2971$ & $+1.1427$\\
\midrule
$D(\tfrac12),\ D'(\tfrac12)$
  & $-0.0027$ & $-1.111$ & $+0.0405$ & $-1.75$\\
est. error & $\pm 0.0037$ & $\pm 0.086$
& $\pm 0.0072$ & $\pm 0.19$\\
  \bottomrule
\end{tabular}
\caption{The Riesz evaluation of $D(1/2)$ and $D'(1/2)$ compared 
to naive truncation.}
\label{tab:riesz}
\end{table}
Table \ref{tab:eis} shows the comparison of the estimated Eisenstein boundary term $\mathcal{E}(n,k,X)$ with the measured non-oscillating error coefficient $C$ in \eqref{eq:rXparams} for various prime levels $p$.
The agreement degrades with $p$ because we worked with a fixed $X_{\max}$ --- so that $c_{\max}\propto 1/p$ --- rather than a fixed number of moduli, and the conditionally-convergent $D'(\tfrac12)$-sum
is progressively under-resolved.
\begin{table}[ht]
\centering
\begin{tabular}{rrccc}
\toprule
$p$ & $c_{\max}$ & estimated $\mathcal{E}(n,k,X)/\sqrt{X}$ & measured $C$ & ratio\\
\midrule
$11$ & $16113$ & $-3.75\times10^{-5}\ (\pm3.3\cdot10^{-6})$ & $-3.88\times10^{-5}\ (\pm1.2\cdot10^{-6})$ & $0.97$\\
$17$ & $10426$ & $-3.31\times10^{-5}\ (\pm2.8\cdot10^{-6})$ & $-3.52\times10^{-5}\ (\pm1.2\cdot10^{-6})$ & $0.94$\\
$19$ & $9328$  & $-2.96\times10^{-5}\ (\pm1.6\cdot10^{-5})$ & $-3.54\times10^{-5}\ (\pm1.8\cdot10^{-6})$ & $0.83$\\
$37$ & $4790$  & $-3.49\times10^{-5}\ (\pm1.9\cdot10^{-5})$ & $-2.98\times10^{-5}\ (\pm1.9\cdot10^{-6})$ & $1.17$\\
$43$ & $4121$  & $-5.43\times10^{-5}\ (\pm7.3\cdot10^{-6})$ & $-2.74\times10^{-5}\ (\pm2.5\cdot10^{-6})$ & $1.98$\\
\bottomrule
\end{tabular}
\caption{The estimated Eisenstein boundary term versus the measured non-oscillating error coefficient, $n=1$, $k=7$. }
\label{tab:eis}
\end{table}

A further remark is in order. First, at the levels $p=37,43$ one has $f|_{2}W_p=f$ and 
$L_f(1)=0$, 
so both terms of the mixed scattering coefficient $\phi^*_{\infty 0}$ vanish 
and the central value $\phi^*(1,\tfrac12)$ --- hence $D(\tfrac12)$ --- vanishes \emph{exactly}.
 This provides an absolute benchmark for the Riesz errors: at $p=37$ 
 the Riesz evaluation gave $D(\tfrac12)=+0.002\pm 0.022$, which is consistent,
 while for $p=43$ we see in Table \ref{tab:riesz} that $D(\tfrac12)=+0.0405\pm 0.0072$,
showing the quoted error to be optimistic, consistent with the ratio in
  Table \ref{tab:eis}. 

\subsection{Maass cusp form contribution}
\label{Maasscontr}
For simplicity, consider first the case where we know that there are no exceptional eigenvalues. Then we are again
in the situation of \eqref{eq:rXparams} where each term in the Maass sum has three free parameters,
the amplitude, $B_j$, the phase, $\theta_j$, and the frequency, $t_j$.
Note that if there are exceptional eigenvalues, then this would be visible already at the level of fitting the Eisenstein contribution
or the slow drift term. For more precise statements see the paragraphs at the end of this section.

Write the amplitude from \eqref{eq:rXparams} as
\begin{equation}
R_j^+(n)=B_{k,n}(t_j)\cdot\sum_{\ell}L_{f\otimes\eta_\ell}(s_j)\rho_\ell(n).
\end{equation}
Stirling's formula implies that $|B_{k,n}(t)|$ decays exponentially in $t$ and unless 
the Rankin--Selberg values $L_{f\otimes\eta_\ell}(s_j)$ grow with $t_j$, it is reasonable to assume that 
the lowest frequencies will dominate the sum.

The frequency resolution, meaning the precision with which we can distinguish between different spectral parameters, 
is given by $\delta t\sim \frac{2\pi}{\Delta \log X}$. The numerical precision obtainable by frequency 
analysis of $r(X)/\sqrt{X}$ is therefore limited by the range of $X$ we have access to.
For instance, with $10^4 \le X\le 10^{10}$ we have $\Delta \log X \sim 13.81$ and hence $\delta t \sim 0.45$.
To achieve a resolution of, say, $\delta t = 0.1$, on $\Gamma_0(p)$, it is necessary to compute $r(X)$ 
 with $X_{\max} \ge 10^{27}$, hence to evaluate Kloosterman sums for $c$ up to $\sqrt{\pi X_{\max}}/p \approx 5.6\cdot 10^{13}/p$, 
 which is currently not feasible. 
 All our computations have been performed on a server with 32 Intel Xeon Silver 4514Y cores and 500GB of RAM.
 The largest evaluation of $r(X)$ used $c_{\max} \sim 500,000$ for $N=11$ 
 and this took about 16.1h wall-clock time using all 32 cores. 
 Ignoring potential issues with double floating-point precision under- and overflows 
computing up to $c_{\max}\approx 5\cdot 10^{12}$ (as required at $p=11$) on the same hardware using the same 
method would take roughly $3.7\times 10^{11}$ years.

As an example of the resolution we can achieve, we consider the case of $n=1$ and $k=7$ for the prime levels $N=11, 17, 19$ and $43$ and 
fit a single cosine to the dominant frequency over a window of $10^4 \le X\le 10^{10}$. See Table \ref{tab:spec}.
\begin{table}[ht]
\centering
\begin{tabular}{rcccl}
\toprule
$N$ & extracted $t$ & osc.\ RMS & var.\ expl. & nearest LMFDB $t_j$\\
\midrule
$11$ & $2.20$ & $9.1\cdot10^{-7}$ & $87\%$ & $2.033,\ 2.484$\\
$17$ & $1.88$ & $1.9\cdot10^{-6}$ & $91\%$ & $1.441, 1.850,\ 1.968,\ 1.979$\\
$19$ & $2.54$ & $7.7\cdot10^{-7}$ & $59\%$ & $1.092,\ 1.330,\ 1.870,\ 2.220,\ 2.297,\ 2.610$\\
$43$ & $2.60$ & $1.1\cdot10^{-6}$ & $68\%$ & $0.655,\ \ldots \textrm{[10 eigenvalues]}\ldots, 2.273,\ 2.295,\ 2.325$\\
\bottomrule
\end{tabular}
\caption{Dominant oscillation frequency of the detrended residual, per level ($n=1$, $k=7$)}
\label{tab:spec}
\end{table}
As an alternative to fitting a single cosine and trying to extract the dominant frequency, we can also
use the known certified eigenvalues of Maass cusp forms from the LMFDB \cite{LMFDB} and fit only the amplitudes and phases. 
The predicted amplitudes are computed from the explicit formula: writing $R_j^+(n)=B_{k,n}(t_j)\,\rho_j(1)^2\,L_{\mathrm{an}}(s_j)\lambda_j(n)$, where $\lambda_j(m)$ are the Hecke eigenvalues of $\eta_j$ and $L_{\mathrm{an}}(s)=\sum_m\lambda_f(m)\lambda_j(m)m^{-s}$ with $\lambda_f(m)=a(m)/\sqrt m$, the Rankin--Selberg value is continued to $s_j$ through the completed degree-four $L$-function, whose local factor at $p$ (of Steinberg$\times$Steinberg type) and functional-equation sign were determined empirically by a consistency test on the coefficient data; the Petersson normalisation $\rho_j(1)^2$ follows from Rankin--Selberg unfolding, with $\operatorname{Res}_{s=1}\sum_m\lambda_j(m)^2m^{-s}$ measured directly from the certified LMFDB coefficients. No parameter is fitted. Using a window $X\le 10^{13}$ ($\delta t\approx 0.34$) we obtain the results in Table \ref{tab:deepwin}.
\begin{table}[ht]
\centering
\footnotesize
\begin{tabular}{rlcccl}
\toprule
$p$ & frequency (LMFDB) & fitted amplitude & comp.\ $2|R_j^+(1)|$ & comp/fit\\
\midrule
$11$ & $t_1=2.0331$ & $(9.7$--$10.0)\cdot10^{-7}$ & $9.81\cdot10^{-7}$ & $0.98$--$1.01$ \\
$11$ & $t_1$, $k{=}11$ & $1.40\cdot10^{-10}$ & $1.39\cdot10^{-10}$ & $1.01$ \\
$17$ & $t_1=1.4414$ & $(1.12$--$1.14)\cdot10^{-6}$ & $1.12\cdot10^{-6}$ & $0.98$--$1.00$ \\
$19$ & $t_1=1.0920$ & $(2.22$--$2.23)\cdot10^{-6}$ & $2.23\cdot10^{-6}$ & $\mathbf{1.00}$ \\
$43$ & $t_1=0.6555$ & $(1.7$--$2.1)\cdot10^{-6}$ & $2.35\cdot10^{-6}$ & $1.1$--$1.4$  \\
\bottomrule
\end{tabular}
\caption{
        Fitted and computed per-eigenvalue amplitudes ($n=1$, $k=7$ except where noted $k=11$). }
\label{tab:deepwin}
\end{table}

Conversely, releasing a frequency in the fit \emph{measures} the corresponding eigenvalue 
from the Kloosterman-sum data alone: at level $11$ this gives 
$\hat t_1 = 2.0280\pm 0.0029$, against the certified value $2.03309\ldots$. 
The full decomposition of $r(X)/\sqrt{X}$ into its Eisenstein and Maass layers, together with the computed first-eigenvalue amplitudes $\pm 2|R_1^+(1)|$, is displayed in Figure \ref{fig:decomp}.

\begin{figure}
        \centering
        \includegraphics[width=0.9\textwidth]{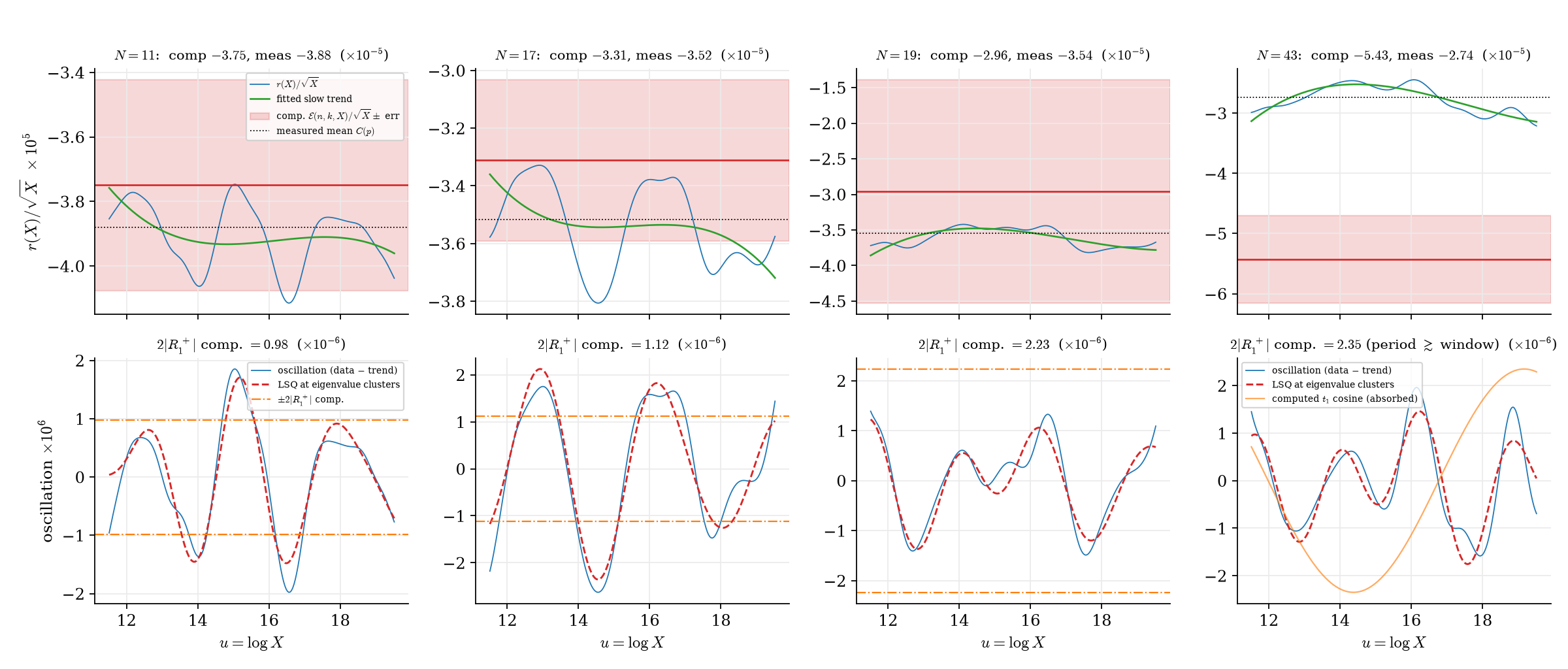}
        \caption{The two remainder terms at $k=7$, $n=1$. Top (Eisenstein layer): $r(X)/\sqrt{X}$ with its slow trend; the shaded band is the regularised prediction of the boundary term with its error. 
        Bottom (Maass layer): the trend-subtracted oscillation with the least-squares cosine model at the 
        certified frequencies from LMFDB; dash-dotted lines mark the computed $\pm2|R_1^+(1)|$, and at $N=43$ the predicted slow $t_1=0.655$ cosine, whose period is comparable to the window, is drawn explicitly.}
        \label{fig:decomp}
\end{figure}

Finally, these data bear on the exceptional spectrum from a second direction,
complementing the block-convergence test of the first subsection above: an exceptional
eigenvalue $s=\tfrac12+\delta$ would contribute a term growing like $e^{\delta\log X}$ to
the normalised residual, and profiling such a column against the deep grids bounds the
corresponding residue at, e.g., the Kim--Sarnak point $\delta=7/64$ by $1.5\cdot10^{-6}$
($p=11$) --- three orders of magnitude below its natural size --- with the caveats that
the bound constrains the product with $L_{f\otimes\eta}(s)$ and degenerates as
$\delta\to0$. Since the cost of the underlying tables scales like $X\log X/p$,
\emph{inversely} in the level, the same experiment is inexpensive at prime levels beyond
the range of current rigorous verifications of Selberg's conjecture. As a first instance
we have carried it out at $N=997$ (about 20 minutes of computation on 32 cores for the table to
$X=10^{13}$): the ratios of the main terms of Theorems \ref{main} and \ref{0thmn} are confirmed at
this level $1$ with errors $\pm 4\cdot 10^{-6}$ and $\pm 0.001$, respectively, 
the residual shows no growing component, and a natural-size exceptional
residue is excluded for $\delta\gtrsim0.2$, though at the Kim--Sarnak point
$\delta=7/64$ only marginally --- the natural residue scale itself shrinks like
$1/\mathrm{Vol}(\Gamma_0(N)\backslash\mathbb{H})$, so a sharper statement at large
level requires $X\gtrsim10^{15}$, which remains entirely practical there.

The numerical evidence and algorithms used in this section are presented in more detail
in the notes \cite{StrZ} by the third author.

\end{document}